\numberwithin{equation}{section}
\def\cocoa{{\hbox{\rm C\kern-.13em o\kern-.07em C\kern-.13em o\kern-.15em A}}}
\declaretheorem[numberwithin=section]{theorem}
\newtheorem{lemma}[theorem]{Lemma}
\newtheorem{proposition}[theorem]{Proposition}
\newtheorem{corollary}[theorem]{Corollary}
\theoremstyle{definition}
\newtheorem*{ack}{Acknowledgements}
\newtheorem{remark}[theorem]{Remark}
\newtheorem{definition}[theorem]{Definition}
\newtheorem{example}[theorem]{Example}
\newtheorem{question}[theorem]{Question}
\newcommand {\uc}{\mathrm{uc}}
\newcommand {\PGL}{\mathrm{PGL}}
\newcommand {\Pf}{\mathrm{Pf}}
\newcommand {\coker}{\mathrm{coker}}
\newcommand {\ch}{\mathrm{ch}}
\newcommand {\sHom}{\mathcal{H}\kern -0.25ex{\mathit om}}
\newcommand {\sExt}{\mathcal{E}\kern -0.25ex{\mathit xt}}
\newcommand {\sTor}{\mathcal{T}\kern -0.25ex{\mathit or}}
\newcommand {\im}{\mathrm{im}}
\newcommand {\rk}{\mathrm{rk}}
\newcommand {\Ext}{\mathrm{Ext}}
\newcommand {\Hom}{\mathrm{Hom}}
\newcommand {\Hilb}{\mathrm{Hilb}}
\newcommand {\Aut}{\mathrm{Aut}}
\newcommand {\field}{\bk}
\newcommand {\Db}{\mathrm{D}^b}
\newcommand {\Ku}{\mathrm{Ku}}
\newcommand {\At}{\mathrm{At}}
\newcommand {\tr}{\mathrm{Tr}}
\newcommand {\quantum}{k}
\newcommand {\fM}{\mathfrak{M}}
\newcommand {\fU}{\mathfrak{U}}
\newcommand {\cK}{\mathcal{K}}
\newcommand {\cA}{\mathcal{A}}
\newcommand {\cB}{\mathcal{B}}
\newcommand {\cU}{\mathcal{U}}
\newcommand{\cC}{{\mathcal C}}
\newcommand{\cS}{{\mathcal S}}
\newcommand{\cE}{{\mathcal E}}
\newcommand{\cF}{{\mathcal F}}
\newcommand{\cN}{{\mathcal N}}
\newcommand{\cO}{{\mathcal O}}
\newcommand{\cG}{{\mathcal G}}
\newcommand{\cT}{{\mathcal T}}
\newcommand{\cTF}{{\mathcal F_0}}
\newcommand{\cI}{{\mathcal I}}
\newcommand{\fI}{{\mathfrak I}}
\newcommand {\bk}{\mathds{k}}
\newcommand {\bN}{\mathds{N}}
\newcommand {\bZ}{\mathds{Z}}
\newcommand {\bQ}{\mathds{Q}}
\newcommand {\bC}{\mathds{C}}
\newcommand {\bP}{\mathds{P}}
\newcommand {\bv}{\mathrm{v}}
\newcommand {\bV}{\mathds{V}}
\newcommand{\GL}{\operatorname{GL}}
\newcommand{\Pic}{\operatorname{Pic}}
\def\p#1{{\bP^{#1}}}
\def\mapright#1{\mathbin{\smash{\mathop{\longrightarrow}
\limits^{#1}}}}
\title[Ulrich and instanton bundles on cubic fourfolds]{Ulrich and
  instanton bundles on special cubic fourfolds}
\subjclass[2020]{Primary: 14J60. Secondary: 14D21, 14F06, 14J45}
\keywords{Instanton bundle, Steiner bundle, Ulrich bundle, Kuznetsov
  category, Special cubic fourfolds}
\author[G. Casnati]{Gianfranco Casnati}
\address{
Gianfranco Casnati,
Dipartimento di Scienze Matematiche, Politecnico di Torino,
c.so Duca degli Abruzzi 24,
10129 Torino, Italia.}
\email{gianfranco.casnati@polito.it}
\author[D. Faenzi]{Daniele Faenzi}
\address{Daniele Faenzi.
  Université Bourgogne Europe, CNRS, IMB UMR 5584, F-21000 Dijon, France}
\email{daniele.faenzi@u-bourgogne.fr}
\author[F. Galluzzi]{Federica Galluzzi}
\address{Federica Galluzzi.
 Dipartimento di Matematica,
  Universit\`a di Torino, 
  Via Carlo Alberto 10, 10123 Torino,Italia}
\email{federica.galluzzi@unito.it}
\begin{document}

\excludecomment{versionb}
%\includecomment{versionb}

\sloppy

\begin{abstract}
We study instanton and Ulrich bundles on hypersurfaces of the
projective space, with a focus on special cubic fourfolds
and generalized Pfaffians, notably defined by skew-symmetric
endomorphisms of Steiner bundles.
We prove that the acyclic extensions of instantons deform to Ulrich
bundles and deduce that the existence of instantons of low rank and
charge implies the existence of Ulrich bundles of low rank, which in
turn forces the fourfold to lie in some Hassett divisor. Finally we take a
closer look to divisors of cubics with discriminant 18 and 20.
\end{abstract}

\maketitle
\begin{versionb}
   
\label{section:introduction}

\end{versionb}

\section{Introduction}
\label{section:introduction}

Let $X$ be a projective $n$-dimensional variety, $n \ge 2$, polarised by a very ample line bundle $\cO_X(1)$. 
A coherent sheaf $\cE$ on $X$ is Ulrich if 
\[
h^{i-1}(\cE(-i))=h^i(\cE(-i))=0, \qquad \mbox{for $1 \le i \le n$}.
\]
These sheaves, modeled on Cohen-Macaulay modules having the maximal number of minimal generators, were introduced in 
\cite{eisenbud-schreyer} in the analysis of Chow forms and determinantal representations of hypersurfaces. Since then, they have been studied under many perspectives, including derived categories, positivity questions, Boij-S\"oderberg theory, to mention a few. Their existence, conjectured in \cite{eisenbud-schreyer} on any projective scheme, is known in several situations, notably complete intersections, curves and some nice classes of surfaces and threefolds. It remains wide open in general.
We refer to \cite{costa-miro-pons:ulrich} for an exhaustive treatment of Ulrich sheaves.

Relaxing slightly the cohomological condition of Ulrich sheaves, an instanton sheaf on $X$ was defined in \cite{An--Cs1} as a coherent sheaf
$\cE$ such that:
\[h^{i-1}\big(\cE(-i)\big)=h^{i+1}\big(\cE(-i-1)\big)=0, \qquad \mbox{if
  $1\le i \le n-1$},\]
with the extra assumption that $h^1(\cE(-1))=h^{n-1}(\cE(-n))$.
The value of $h^1(\cE(-1))$, frequently denoted by $k$, was called the \textit{charge} of $\cE$. Note that these instanton sheaves are called ordinary in \cite{An--Cs1}.
Up to tensoring with by a line bundle, this gives back earlier definitions of instanton sheaves (or bundles) introduced in many situations, notably projective spaces and Fano threefolds, see for instance \cite{O--S--S,Kuz,faenzi-instanton,antonelli-malaspina-marchesi}. These notions of instantons are inspired on the case of $\p3$, where these bundles arise, via Penrose' twistor transform, from solutions of the Yang-Mills equation on the 4 dimensional sphere $S^4$, see \cite{atiyah-ward, ADHM}.

According to these definitions, an Ulrich sheaf is precisely an instanton of charge $0$. We mentioned that the question of whether any projective variety carries an Ulrich sheaf is not settled: of course, the existence of instantons of any given charge is even less clear. However, for some specific classes of varieties where existence of Ulrich bundles is known (for instance, hypersurfaces) the main questions currently under investigation delve with the possible rank, or more generally Chern characters, of Ulrich and instanton sheaves, as well as the properties of their moduli space, e.g. the dimension, irreducible components, smoothness of such spaces. For instance, non-emptyness and generic smoothness of instanton moduli of high rank and charge has been proved on Fano threefolds of Picard number one in \cite{faenzi-comaschi}, while Ulrich bundles on Del Pezzo threefolds where studied in \cite{ciliberto}.
Very little is known about these questions in dimension $4$ and higher.

Our focus here is mainly on fourfold hypersurfaces $X \subset \bP^5$, polarised by $h_X=c_1(\cO_X(1))$, especially smooth cubic fourfolds.
In this case, Ulrich bundles have been studied in connection with representation theory, derived categories and Hassett divisors, see \cite{faenzi-kim, kim, manivel-ulrich,truong-yen}. To summarize, we know that any $X$ carries Ulrich bundles of rank 6 and that a general $X$ carries Ulrich bundles of any rank of the form $3r$ with $r \ge 2$. Also, some information is known for general fourfolds of some Hassett divisors $\cC_\delta$ in the moduli space of cubic fourfolds, consisting of cubics whose middle integral cohomology contains a sublattice of discriminant $\delta$. For instance if the discriminant equals $8$ (resp. $18$) then $X$ carries an Ulrich bundle of rank $4$ (respectively $3$).

On the instanton side, not much is known for cubic fourfolds. However, we know that instantons of rank 2 are related to Pfaffian-type representations of the hypersurface. More precisely, in \cite{An--Cs2} it is shown that when $X\subseteq\p
{n+1}$ is a smooth hypersurface of degree $d$, the existence of a rank two instanton bundle $\cE$ with charge $\quantum$ on $X$
is equivalent to the existence of a skew-symmetric morphism
\[\varphi\colon\cF(-1)\to\cF^\vee,\]
where $\cF$ is a  \textit{Steiner bundle},
namely $\cF$ has a linear resolution, precisely:
\begin{equation}
  \label{steiner}
  0 \to \cO_{\bP^{n+1}}(-1)^{\oplus k} \to \cO_{\bP^{n+1}}^{\oplus 2d+3k} \to \cF  \to 0,
\end{equation}
such that $X = \bV(\Pf(\varphi))$, the Pfaffian of
$\varphi$, and  $\cE = \coker(\varphi)$.
We say that $X$ is Steiner-Pfaffian of type $\cF^\vee$. Being Pfaffian
amounts to being Steiner-Pfaffian of type $\cO_{\bP^{n+1}}^{2d}$, which
corresponds to the case $k=0$
and $\cE$ is the Ulrich bundle responsible for the
Pfaffian representation. As we said, the focus here is on $n=4$, which is the highest dimension where one hopes to find smooth hypersurfaces which are Pfaffian in our sense.

More generally, when a smooth fourfold hypersurface $X \subset \bP^5$ is obtained as Pfaffian of a skew-symmetric morphism of bundles $\varphi : \cF(-\ell) \to \cF^\vee$, where $\cF$ is any vector bundle on $\bP^5$, one can compute the discriminant of the sublattice in the middle cohomology of $X$ generated by $h_X^2$ and $c_2(\coker(\varphi))$
in terms of $\ell$ and of the Chern classes of $\cF$. These computations are carried out alongside with the numerical data of surfaces $Y \subset X$ obtained from instanton bundles in \S \ref{section:Pfaffians}.

The first main result of this paper is about the deformation theory of instanton sheaves  on a smooth cubic fourfold $X$. Given any instanton sheaf $\cE$ on $X$, we start by considering a tautological extension $\cG$ of $\cE$ by $k$ copies of $\cO_X(1)$. We call $\cG $ the \textit{acyclic extension of $\cE$} by analogy with the terminology of \cite{Kuz}, although the reader should be warned that $\cG(-1)$ is acyclic, not $\cG$ itself.
A key observation is that $\cG (-1)$ lies in a very nice subcategory of the derived category of coherent sheaves on $X$, namely 
the Kuznetsov category $\Ku(X)$. Due to the fact that $\Ku(X)$ is a K3 category, we have that $\Ext^2_X(\cG,\cG)$ is dual to $\Hom_X(\cG,\cG)$ and is thus non-zero.
Also, one sees that  $\Ext^2_X(\cE,\cE)$ is also  non-zero, as it dominates 
$\Ext^2_X(\cG,\cG)$.
If $\Ext^2_X(\cE,\cE)$ is one-dimensional, we say that $\cE$ is \textit{unobstructed}. Relating the deformation theory of $\cE$ to that of $\cG$ and using that the latter is particularly well-behaved, we obtain the following result, see Theorem \ref{unobstructed}.
\begin{theorem} \label{unobstructed-intro}
    Let $\cE$ be an unobstructed instanton sheaf on a smooth cubic fourfold $X$. Then the moduli space of simple sheaves on $X$ is smooth at the point $[\cE]$.
\end{theorem}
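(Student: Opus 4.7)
My plan is to leverage the K3 category structure of $\Ku(X)$, together with the acyclic extension $\cG$ of $\cE$, to compare the deformation theory of $\cE$ with that of an object in $\Ku(X)$, where it is better behaved, and to use the unobstructedness hypothesis to single out a one-dimensional obstruction space.

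First I would spell out the chain of identifications comparing $\Ext^2_X(\cE,\cE)$ and $\Ext^2_X(\cG,\cG)$. By construction $\cG(-1)\in\Ku(X)$, and since $\Ku(X)$ is a two-dimensional Calabi-Yau category, Serre duality there gives $\Ext^2_X(\cG,\cG)\cong\Hom_X(\cG,\cG)^\vee$, which is non-zero as it contains the identity. Combined with the surjection $\Ext^2_X(\cE,\cE)\twoheadrightarrow\Ext^2_X(\cG,\cG)$ described in the introduction and the unobstructedness hypothesis $\dim\Ext^2_X(\cE,\cE)=1$, one deduces $\dim\Hom_X(\cG,\cG)=\dim\Ext^2_X(\cG,\cG)=1$; in particular $\cG$ is simple and the surjection is an isomorphism.

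Next I would invoke the classical fact, going back to Mukai and extended to K3 categories by Inaba and others, that the moduli of simple objects in a two-dimensional Calabi-Yau category is smooth. Applied to $[\cG(-1)]\in\Ku(X)$, this tells us that the Kuranishi obstruction map $q_\cG\colon\Ext^1_X(\cG,\cG)\to\Ext^2_X(\cG,\cG)$ vanishes identically. To transfer the conclusion back to $\cE$, I would argue by naturality of obstructions along the acyclic extension: a first-order deformation $\cE_1$ of $\cE$ induces a first-order deformation $\cG_1$ of $\cG$ (since $\cO_X(1)^k$ is rigid and $\Ext^1_X(\cO_X(1)^k,\cE)$ is locally free of the expected rank near $[\cE]$), and the Buchweitz-Flenner obstruction of $\cE_1$ corresponds, under the isomorphism established above, to the obstruction of $\cG_1$, which vanishes. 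It follows that the obstruction map $q_\cE$ is identically zero and $[\cE]$ is a smooth point of the moduli of simple sheaves on $X$.

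The principal obstacle is the last, naturality-of-obstructions step. One must verify, first, that the formation of the acyclic extension genuinely spreads to the formal universal family over the deformation ring of $\cE$; second, that the obstruction classes of $\cE$ and $\cG$ correspond under the isomorphism between their $\Ext^2$'s, via a cup-product formula compatible with the triangle $\cE\to\cG\to\cO_X(1)^k$; and third, that deformations of $\cG$ inside $\Ku(X)$ agree with sheaf-theoretic deformations of $\cG$ on $X$, which is expected since $\Ku(X)$ is admissible in $\Db(X)$. Each item should follow from standard Atiyah class manipulations together with the Buchweitz-Flenner cup-product formula for the Kuranishi obstruction; their combination constitutes the technical heart of the argument.
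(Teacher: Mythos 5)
Your reduction to the acyclic extension $\cG$ is exactly the right move, and your first two steps agree with the paper: under the unobstructedness hypothesis the surjection $\Ext^2_X(\cE,\cE)\twoheadrightarrow\Ext^2_X(\cG,\cG)\simeq\Hom_X(\cG,\cG)^\vee$ forces $\cG$ to be simple and the surjection to be an isomorphism, and the good deformation theory of $\cG(-1)$ inside the K3 category $\Ku(X)$ is what one wants to exploit. The divergence, and the genuine gap, is in how you transfer smoothness back to $\cE$. Your ``naturality of obstructions'' step is not an established fact you can simply invoke: to conclude that $[\cE]$ is a smooth point you would need to compare the \emph{entire} formal deformation functors of $\cE$ and $\cG$ (or at least all higher-order obstructions), not just the first-order quadratic obstruction. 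Showing that a deformation of $\cE$ over an arbitrary Artinian base induces one of $\cG$ requires a cohomology-and-base-change argument for $H^1(\cE(-1))$ (which does hold thanks to $h^0(\cE(-1))=h^2(\cE(-1))=0$, but must be said), and showing that the resulting map of deformation functors identifies obstruction classes at every order is precisely the content you defer to ``standard Atiyah class manipulations.'' As written, the argument establishes at best that the quadratic part of the Kuranishi map of $\cE$ vanishes, which does not imply smoothness.

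The paper avoids this entirely by using the Buchweitz--Flenner semiregularity criterion: the moduli space of simple sheaves is smooth at $[\cE]$ provided the map $\sigma^\cE(\xi)=\tr(\Upsilon(\xi\otimes\At(\cE)))$ is injective on $\Ext^2_X(\cE,\cE)$ (with values in $H^3(X,\Omega_X)\simeq\bk$). Injectivity of $\sigma^\cG$ is known from the symplectic moduli theory underlying Proposition \ref{list of deltas}, and the transfer to $\cE$ is a purely cohomological computation: one constructs $\psi=(\zeta_*^1)^{-1}\circ\zeta_1^*$ carrying $\At(\cG)$ to $\At(\cE)$ (using the instanton vanishings to see that $\zeta_*^1$ is an isomorphism on $\Ext^1(-,-\otimes\Omega_X)$), and then functoriality of the Yoneda product with respect to $\zeta$ gives $\sigma^\cE(\xi)=\sigma^\cG(\eta)$ for corresponding classes. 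This is a one-shot criterion requiring no comparison of deformation functors or higher obstructions. I would recommend either adopting this route, or, if you want to keep your approach, proving honestly that the acyclic extension construction defines a smooth morphism (indeed an equivalence) between the two deformation functors, which is the statement your sketch actually needs.
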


The second goal of our paper is to establish a closer relationship between Ulrich and instanton bundles. This is based on the observation that acyclic extensions of instantons have the Chern character of Ulrich bundles. Looking in detail at the deformation theory of $\cG$ we see that $\cG$ deforms flatly to an Ulrich bundle.
This leads to our second main result, see Theorem \ref{theorem ulrich}. 
In the statement, $\fU_X(r)$ denotes the moduli space of simple Ulrich bundles $\cU$ on $X$ of rank $r$, while $\fU_X(r,a)$ is the subspace of Ulrich bundles $\cU$ having $c_2(\cU)^2=a \in \bZ$.

\begin{restatable}{theorem}{instantonvsulrich} 
The space $\fU_X(r,a)$ is either empty or
  smooth and symplectic of dimension 
  \[
  m =
  2+\frac{r^2(3r^2-2r+3)}4-a.
  \]
  
  If $\cE$ is an unobstructed instanton bundle of rank $s$ and charge $k$,
  then $\fU_X(r)$ contains a flat deformation of $\cE \oplus
   \cO_X(1)^{\oplus k}$ and $m=2+r^2-c_2(\cE(-h))^2$, with $r=s+k$.
  If $r<6$ or $3 \nmid r$, $X$ lies in a Hassett divisor $\cC_\delta$ with $\delta$ as in \eqref{delta-intro}.
    All possible
  values of $(r,\delta,m)$ with $r<6$ are in Table
  \ref{table-of-possible}.
\end{restatable}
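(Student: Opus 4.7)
The smoothness and symplectic structure rest on the fact, used in the introduction, that $\cU(-h)\in \Ku(X)$ for any Ulrich bundle $\cU$, and that $\Ku(X)$ is a $2$-Calabi--Yau (K3) category. For simple $\cU$, Serre duality in $\Ku(X)$ gives $\Ext^2(\cU,\cU)\cong\Hom(\cU,\cU)^\vee=\bk$ and vanishing of $\Ext^{\ge 3}(\cU,\cU)$; the obstruction to deforming $\cU$ is controlled by the (surjective) trace map and thus vanishes, yielding smoothness. The Serre pairing on $\Ext^1(\cU,\cU)=T_{[\cU]}\fU_X(r,a)$ then provides the holomorphic symplectic form.

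To obtain the dimension $m = 2-\chi(\cU,\cU)$, I would compute $\chi$ via Hirzebruch--Riemann--Roch. The Ulrich condition gives $\chi(\cU(t))=3r\binom{t+4}{4}$, which together with $H^2(X,\bZ)=\bZ h$ pins down $\ch_i(\cU)$ for $i\neq 2$ in terms of $r$ alone, leaving $c_2(\cU)\in H^{2,2}(X,\bZ)$ as the only free invariant. Collecting terms in $\int_X\ch(\cU^\vee)\ch(\cU)\,\mathrm{td}_X$, the single non-universal contribution is a multiple of $c_2(\cU)^2=a$, and simplification produces the stated closed form for $m$.

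For the second assertion, I would use Theorem~\ref{unobstructed-intro} to conclude that the moduli of simple sheaves is smooth at $[\cE]$, and transport this smoothness to the acyclic extension $\cG$ of $\cE$: the Chern character of $\cG$ coincides with that of a rank-$r$ Ulrich bundle, while $H^*(\cG(-j))=0$ for $j=1,2,3$ already holds by construction. Since Ulrich-ness is Zariski-open and the moduli at $[\cG]$ is smooth, a generic flat deformation of $\cG$ (hence of $\cE\oplus \cO_X(1)^{\oplus k}$) lies in $\fU_X(r)$. Substituting $\ch(\cG)=\ch(\cE)+k\,\ch(\cO_X(1))$ into the first dimension formula and simplifying then yields $m=2+r^2-c_2(\cE(-h))^2$.

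Finally, for the Hassett statement, the argument is numerical: if $c_2(\cU)$ were a rational multiple of $h^2$, the Ulrich Hilbert polynomial would force $6\mid r$, contradicting $r<6$ or $3\nmid r$. Hence $c_2(\cU)$ and $h^2$ span a rank-two saturated sublattice of $H^{2,2}(X,\bZ)$ of discriminant $\delta$ given by~\eqref{delta-intro}, which by Hassett's criterion places $X\in\cC_\delta$. Enumerating integer solutions of the resulting inequalities with $r<6$ yields Table~\ref{table-of-possible}. I expect the delicate step to be the verification that the smooth deformation of $\cG$ furnished by the K3 structure on $\Ku(X)$ actually hits the open Ulrich stratum and not merely perturbations of the split sheaf $\cE\oplus \cO_X(1)^{\oplus k}$; this reduces to an obstruction-theoretic analysis of the extension class, where the Calabi--Yau structure on $\Ku(X)$ is essential.
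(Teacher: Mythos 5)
Your first two paragraphs and the Hassett/discriminant paragraph follow the paper's route (Proposition \ref{list of deltas}): $\cU(-1)\in\Ku(X)$, the K3 structure of $\Ku(X)$ gives smoothness and the symplectic form, Riemann--Roch gives $m=2-\chi(\cU^\vee\otimes\cU)$, and the constraints $m\ge 0$, $\delta\ge 8$, and integrality of $c_4(\cU)$ cut the table down. That part is fine.

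The genuine gap is in your third paragraph, and it is exactly the step you flag at the end without resolving. You write that ``since Ulrich-ness is Zariski-open and the moduli at $[\cG]$ is smooth, a generic flat deformation of $\cG$ lies in $\fU_X(r)$.'' Openness plus smoothness does not give non-emptiness of the open locus: the acyclic extension $\cG$ is itself \emph{not} Ulrich, since $\Hom_X(\cG(-1),\cO_X)\cong H^4(\cG(-4))^\vee$ has dimension $k>0$, and a priori every nearby simple sheaf $\cG'$ with $\cG'(-1)\in\Ku(X)$ could retain a non-zero map to $\cO_X$, in which case the Ulrich locus in that component would be empty. Semicontinuity only kills $H^i(\cG'(-4))$ for $i=0,1,2$; the whole content of the theorem is to kill $H^3=H^4$. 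Your proposed fix (``an obstruction-theoretic analysis of the extension class'') is not what closes this, and it is not clear it can. The paper's Proposition \ref{deforms to Ulrich} instead argues by a dimension count: if the generic deformation $\cG'$ had $\dim\Hom(\cG'(-1),\cO_X)=\ell>0$, the coevaluation $\cG'(-1)\to\cO_X^{\oplus\ell}$ is surjective and its kernel $\cK'(-1)$ is an instanton of rank $r-\ell$ and charge $\ell$, deforming a bundle $\cK$ sandwiched between $\cE(-1)$ and $\cO_X^{\oplus(k-\ell)}$; the assignment $\cG'\mapsto\cK'$ is generically injective (one recovers $\cG'$ as the acyclic extension of $\cK'$), so $m_\cG\le m_{\cK'}\le\dim\Ext^1_X(\cK,\cK)$. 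The unobstructedness hypothesis pins down $\dim\Ext^1_X(\cE,\cE)=m_\cG-k^2$ exactly via Riemann--Roch, and the long exact sequences give $\dim\Ext^1_X(\cK,\cK)\le\dim\Ext^1_X(\cE,\cE)+k^2-\ell^2=m_\cG-\ell^2$, forcing $\ell^2\le 0$, a contradiction. Without this (or an equivalent) argument, the assertion that $\fU_X(r)$ contains a flat deformation of $\cE\oplus\cO_X(1)^{\oplus k}$ is unproved.
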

According to Proposition \ref{list of deltas}, 
a cubic fourfold $X$ carrying an Ulrich bundle $\cU$ with  $\rk(\cU)=r$ and $c_2(\cU)^2=a$ corresponds to a point of $\cC_\delta$, where $\delta$ is computed by
\begin{equation} \label{delta-intro}
\delta = -\frac{r^2(3 r-1)^2}{4} + 3a.    
\end{equation}
It should be noted that, \textit{for given $r$, only a finitely many values of $a$ are allowed}.
Indeed, since Ulrich bundles of rank $r$ have a prescribed Hilbert polynomial, there are finitely many possible choices for the Chern character of the Jordan-H\"older factors of a (necessarily semistable) Ulrich bundle of rank $r$ and each choice determines $a=c_2(\cU)^2$ uniquely. However, since $\delta \ge 0$ and $m \ge 0$, the above formulas provide explicit bounds, which actually force a single choice if and only if $r \in \{2,3\}$.
For $r=4$ already,  we don't know if all choices of $a$ allowed by these bounds are actually realized by a generic cubic of the corresponding divisor $\cC_\delta$.

Anyway, for some choices of $\delta$, a general cubic representing a point in the Hassett divisors $\cC_\delta$ is a natural candidate to support instantons of some charges $k$ and/or Ulrich bundles of some ranks $r$, where $k$ and $r$ depend explicitly on $\delta$.
It should be noted that, if this is the case, in view of the Pfaffian description mentioned above, we automatically get that $\cC_\delta$ is unirational, which is another well-studied  aspect about the moduli space of cubic fourfolds, see for instance \cite{Nuer}.
We look in greater detail at instanton / Ulrich sheaves on cubic fourfolds belonging to Hassett divisors $\cC_{\delta}$ for $\delta=18$ and $\delta=20$.

For $\delta=18$, our main result is Theorem \ref{18-theorem}. It complements and reproves parts of \cite{A--H--T--VA,truong-yen}.
This result outlines a surprising (to us!) connection with Coble cubics, introduced as unique cubic in $\bP^8$ whose singular locus is the Jacobian of a curve of genus 2, embedded by the linear system $|3\Theta|^\vee$. This cubic is connected by projective duality to the moduli space of rank 3 semistable bundles on $C$ -- more precisely, to the branch locus of the theta map toward $|3\Theta|$. In view of \cite{gs}, any Coble cubic $\Gamma$ can be obtained as a Steiner-Pfaffian, starting with an alternating 3-form $\omega$ in $9$ variables, and taking the Pfaffian of the map $\cT_{\bP^8}(-2) \to \Omega_{\bP^8}(1)$ associated with $\omega$. Hence we write $\Gamma=\Gamma_\omega$.
We prove the following result, which is the main portion of Theorem \ref{18-theorem}.
\begin{theorem} \label{C18-intro}
    A generic cubic $X$ representing a point of $\cC_{18}$ carries a 1-dimensional family of instantons $\cE$ of rank 2 and charge 1 and a 2-dimensional families of stable Ulrich bundles of rank 3. Moreover, $X$ is a linear section of a Coble cubic $\Gamma_\omega$ and for each choice of $\cE$ there is a $1$-parameter family of choices for the alternating 3-form $\omega$.
\end{theorem}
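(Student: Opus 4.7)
The plan is to construct instantons on cubic fourfolds via linear sections of Coble cubics, compute the Ulrich and instanton moduli dimensions, and conclude by a parameter count comparing the two sides.

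\emph{Construction of instantons.} Fix a $9$-dimensional vector space $V$ and a generic alternating $3$-form $\omega \in \wedge^3 V^\vee$. By \cite{gs}, the Coble cubic $\Gamma_\omega \subset \bP(V) \cong \bP^8$ is the Pfaffian of the skew morphism $\varphi_\omega : \cT_{\bP^8}(-2) \to \Omega_{\bP^8}(1)$. The Euler sequence presents $\cF := \cT_{\bP^8}(-1)$ as a Steiner bundle with resolution $0 \to \cO(-1) \to \cO^{\oplus 9} \to \cF \to 0$, whose restriction to a generic embedding $\iota : \bP^5 \hookrightarrow \bP^8$ matches \eqref{steiner} with $n=4$, $d=3$, $k=1$. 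Thus $X := \Gamma_\omega \cap \iota(\bP^5)$ is a smooth cubic fourfold that is Steiner-Pfaffian of type $\cF^\vee|_{\bP^5}$, and $\cE := \coker(\varphi_\omega|_{\bP^5})$ is a rank $2$ instanton of charge $1$ on $X$ by the correspondence of \cite{An--Cs2}. The discriminant computation of \S\ref{section:Pfaffians}, or equivalently \eqref{delta-intro} with $r = 3$ and $a = 54$, places $X$ in $\cC_{18}$.

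\emph{Moduli dimensions.} Theorem~\ref{theorem ulrich} with $r = 3$, $\delta = 18$ forces $a = 54$ and yields $m = 2 + \tfrac{9 \cdot 24}{4} - 54 = 2$, so $\fU_X(3, 54)$ is smooth symplectic of dimension $2$. The acyclic extension $0 \to \cO_X(1) \to \cG \to \cE \to 0$ of $\cE$ is unique up to scalar, since Serre duality ($K_X = \cO_X(-3)$) combined with the instanton condition $h^{n-1}(\cE(-n)) = k$ gives $\Ext^1_X(\cE, \cO_X(1)) \cong H^3(\cE(-4))^\vee$ of dimension $k = 1$. Unobstructedness (Theorem~\ref{unobstructed-intro}) together with an $\Ext$-computation in $\Ku(X)$ applied to $\cG(-1) \in \Ku(X)$ then shows the moduli of rank $2$ charge $1$ instantons on $X$ is smooth of dimension $1$ at $[\cE]$.

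\emph{Parameter count.} Modulo $\PGL(V)$, Coble cubics form a $3$-dimensional family, corresponding to the moduli of genus $2$ curves via the Jacobian construction. Adding an $18$-dimensional choice $\iota \in \mathrm{Gr}(6, 9)$ of $\bP^5 \subset \bP^8$ produces a $21$-dim parameter space of pairs $([\omega], \iota)$ up to projective equivalence. The forgetful map to the moduli of cubic fourfolds has image in $\cC_{18}$ (of dimension $19$); irreducibility of $\cC_{18}$ together with the dimension count force dominance, with generic fiber of dimension $21 - 19 = 2$. Composing with the assignment $(\omega, \iota) \mapsto \cE_\omega$ from this $2$-dim fiber onto the $1$-dim instanton moduli on $X$ gives $1$-dim fibers, producing the claimed $1$-parameter family of $\omega$'s realizing each fixed instanton $\cE$.

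\emph{Main obstacle.} The most delicate step is the dominance assertion: for arbitrary $X \in \cC_{18}$ with a generic instanton $\cE$, one must reconstruct a triple $(\omega, \iota)$ with $X = \Gamma_\omega \cap \iota(\bP^5)$ and $\cE \cong \coker(\varphi_\omega|_{\bP^5})$. The natural inversion uses a $9$-dimensional section space attached to $\cG$ (or an appropriate twist) to embed $X$ as a linear section of a cubic in $\bP^8$, then invokes \cite{gs} to identify this ambient cubic as Coble (i.e. arising from an alternating $3$-form on $V$). Controlling this extension of the skew endomorphism from $\bP^5$ to $\bP^8$, and matching the resulting datum to a genuine element of $\wedge^3 V^\vee$ rather than a more general skew morphism, is the key technical difficulty.
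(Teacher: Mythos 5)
Your construction direction (generic Coble cubic $\cap$ generic $\bP^5$ yields a charge-$1$ instanton and lands in $\cC_{18}$) is sound and matches one implication in the paper, but two essential steps are missing. First, the dominance onto $\cC_{18}$: you assert that ``irreducibility of $\cC_{18}$ together with the dimension count force dominance,'' but a $21$-dimensional parameter space mapping to an irreducible $19$-dimensional variety is not automatically dominant --- the fibres could have dimension greater than $2$ --- and you then concede in your final paragraph that the inverse construction is the ``key technical difficulty'' without carrying it out. The paper sidesteps this entirely: existence on a generic member of $\cC_{18}$ comes from the projected degree-$6$ Del Pezzo surface it contains (quoted from \cite{A--H--T--VA}) via Hartshorne--Serre (Proposition \ref{pInstanton1}), and the Coble-cubic assertion is then a separate, purely linear-algebraic equivalence: writing $V_9=V_3\oplus V_6$, the Steiner--Pfaffian datum $\varphi\in H^0(\wedge^2\cF^\vee(1))$ of any charge-$1$ instanton occupies exactly the summands $\wedge^3V_6^\vee\oplus V_3^\vee\otimes\wedge^2V_6^\vee\oplus\wedge^2V_3^\vee\otimes V_6^\vee$ of $\wedge^3V_9^\vee$, and the remaining summand $\wedge^3V_3^\vee\cong\bk$ is precisely the $1$-parameter family of forms $\omega(\lambda)$ extending $\varphi$. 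That explicit decomposition, not a fibre-dimension count, is what produces the $1$-parameter family of $\omega$ for a \emph{fixed} $\cE$, and it simultaneously resolves the inversion problem you flag as the main obstacle.

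Second, unobstructedness is assumed rather than proved. Theorem \ref{unobstructed-intro} takes $\dim\Ext^2_X(\cE,\cE)=1$ as a hypothesis; your ``$\Ext$-computation in $\Ku(X)$'' only controls $\Ext^\bullet_X(\cG,\cG)$ for the acyclic extension $\cG$, which gives the surjection $\Ext^2_X(\cE,\cE)\twoheadrightarrow\bk$, i.e.\ only a lower bound, and formula \eqref{ExtE} for $\dim\Ext^1_X(\cE,\cE)$ is itself derived under the unobstructedness hypothesis. Hence your dimension claims for $\fI_X(2;1)$, and for $\fU_X(3)$ via Proposition \ref{deforms to Ulrich} (whose hypothesis is again unobstructedness, and which is also needed for non-emptiness of $\fU_X(3)$), are circular as written. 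The paper supplies the missing input in Corollary \ref{cCharge1} by a concrete incidence-variety count over the family of projected sextic Del Pezzo surfaces, showing $\fI_X(2;1)$ is generically smooth of dimension $1$, whence $\dim\Ext^1_X(\cE,\cE)=1$ and, by Riemann--Roch, $\dim\Ext^2_X(\cE,\cE)=1$; an alternative is the normal-bundle computation $h^1(\cN_{Y|X})=1$ combined with Lemma \ref{normale-ext}. Some such argument must be added for your proof to close.
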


For $\cC_{20}$, the natural Steiner bundle to consider is $\cF^\vee = \Omega_{\bP^5}(1)^{\oplus 2}$. Our main result about $\cC_{20}$ is Theorem \ref{20-theorem}, which is summarized as follows.
\begin{theorem} \label{C20-intro}
    A generic cubic $X$ representing a point of $\cC_{20}$ carries a 2-dimensional family of instantons $\cE$ of rank 2 and charge 2 and a 6-dimensional families of stable Ulrich bundles of rank 4. Moreover $X$ is Steiner-Pfaffian of type $\Omega_{\bP^5}(1)^{\oplus 2}$.
\end{theorem}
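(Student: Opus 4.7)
The plan is to produce rank-$2$ charge-$2$ instantons on a generic $X\in\cC_{20}$ by means of a Steiner-Pfaffian presentation of type $\cF^\vee = \Omega_{\bP^5}(1)^{\oplus 2}$, and then to pass to rank-$4$ Ulrich bundles by applying Theorem~\ref{theorem ulrich} to the acyclic extension $\cE \oplus \cO_X(1)^{\oplus 2}$.

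Set $\cF = \cT_{\bP^5}(-1)^{\oplus 2}$. Dualising the twisted Euler sequence gives $\cF$ a resolution
\[
0 \to \cO_{\bP^5}(-1)^{\oplus 2} \to \cO_{\bP^5}^{\oplus 12} \to \cF \to 0,
\]
which is precisely the Steiner shape \eqref{steiner} with $d=3$, $k=2$. The space of skew-symmetric morphisms $\varphi\colon\cF(-1)\to\cF^\vee$ is $H^0(\Lambda^2\cF^\vee(1))$, which I would compute from the decomposition
\[
\Lambda^2\!\bigl(\Omega_{\bP^5}(1)\otimes\bC^2\bigr)\;\cong\;\Omega^2_{\bP^5}(2)\otimes S^2\bC^2\;\oplus\;S^2\Omega_{\bP^5}(1)\otimes\Lambda^2\bC^2
\]
together with Bott's formulas. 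Next I would exhibit an explicit $\varphi$ whose Pfaffian defines a smooth cubic fourfold (verifiable by a direct \cocoa{} or Macaulay2 check), so that the correspondence of \cite{An--Cs2} recalled in the introduction yields a rank-$2$ instanton $\cE_\varphi = \coker(\varphi)$ of charge $2$ on $X_\varphi = \bV(\Pf(\varphi))$.

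To place $X_\varphi$ in $\cC_{20}$ I would invoke the Chern-class and discriminant computations of \S\ref{section:Pfaffians} applied to this Steiner-Pfaffian setup (with $\ell = 1$): formula \eqref{delta-intro} for $r = 4$ should yield $a = 168$, hence $\delta = 20$. A parameter count --- $\dim H^0(\Lambda^2\cF^\vee(1))$ minus $\dim\Aut(\cF) = 4$ (since $\cT_{\bP^5}(-1)$ is simple, so $\operatorname{End}(\cF)\cong M_2(\bC)$) --- should then match $\dim\cC_{20}+2 = 21$, simultaneously establishing dominance of the Pfaffian map onto $\cC_{20}$ and the claimed $2$-dimensional family of instantons on a generic $X\in\cC_{20}$.

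Finally, checking $\dim\Ext^2_X(\cE,\cE)=1$ via the Kuznetsov category (as in the proof of Theorem~\ref{unobstructed-intro}) gives unobstructedness of $\cE$ and licenses the application of Theorem~\ref{theorem ulrich}: the acyclic extension $\cE\oplus\cO_X(1)^{\oplus 2}$ flatly deforms to a rank-$4$ Ulrich bundle in $\fU_X(4,168)$, whose dimension is $m = 2 + 16\cdot 43/4 - 168 = 6$. Stability of the generic deformation would follow by ruling out proper Ulrich sub-bundles of rank $1,2,3$: for generic $X\in\cC_{20}$ the algebraic lattice is too restrictive to support rank-$2$ or rank-$3$ Ulrich sheaves, whose presence would by \eqref{delta-intro} force $X$ into a different Hassett divisor. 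The main obstacle will be the dominance/dimension step --- upgrading existence of one smooth-Pfaffian $\varphi$ to dominance over the full divisor $\cC_{20}$ via a tangent-space analysis of the Pfaffian map --- together with controlling stability throughout the flat rank-$4$ family.
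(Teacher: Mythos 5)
Your overall strategy coincides with the paper's: realize a generic $X \in \cC_{20}$ as a Steiner--Pfaffian of type $\Omega_{\bP^5}(1)^{\oplus 2}$, extract the rank-2 charge-2 instanton $\cE=\coker(\varphi)$, verify smoothness of one explicit Pfaffian by computer (this is unavoidable, since $H^0(S^2\Omega_{\bP^5}(3))=0$ makes $\wedge^2\cF^\vee(1)$ not globally generated, so Bertini is unavailable -- a point worth stating), and then pass to rank-4 Ulrich bundles through Theorem \ref{theorem ulrich}. Your decomposition of $\wedge^2\cF^\vee(1)$, the value $a=168$, and the dimension count $m=6$ all agree with the paper.

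There are, however, two places where your plan as written would not go through. First, you propose to check unobstructedness, i.e.\ $\dim\Ext^2_X(\cE,\cE)=1$, ``via the Kuznetsov category.'' The K3 structure of $\Ku(X)$ only yields the \emph{lower} bound $\dim\Ext^2_X(\cE,\cE)\ge 1$ (Lemma \ref{at-least-1}); it says nothing about the upper bound, which is the actual content of unobstructedness. The paper obtains the upper bound geometrically: by Lemma \ref{normale-ext}, $\dim\Ext^2_X(\cE,\cE)\le h^1(\cN_{Y|X})$ for a surface $Y$ cut out by a section of $\cE(1)$, and then $h^1(\cN_{Y|X})=1$ is verified by a second \texttt{Macaulay2} computation (after proving $h^2(\cN_{Y|X})=0$ and $\chi(\cN_{Y|X})=18$ by hand). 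Without this, neither the unobstructedness nor the value $\dim\fI_X(2;2)=2$ is established. Second, your dominance argument is a bare parameter count ($60-4-35=21=\dim\cC_{20}+2$), which is consistent with dominance but does not prove it; indeed you flag this as the main obstacle. The paper closes it by bounding the fibres of $\varphi\mapsto\coker(\varphi)$ over a fixed $X$ by $\dim\GL_2=4$ (using the unique lifting of isomorphisms of cokernels from \cite{An--Cs2}), combining this with $\dim\Ext^1_X(\cE,\cE)=2$ -- which again comes from the normal-bundle computation via Riemann--Roch -- to produce a fibre component of dimension at most $6$, and then invoking lower semicontinuity of fibre dimension to contradict non-dominance. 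Both missing steps thus reduce to the single computation $h^1(\cN_{Y|X})=1$, which your proposal does not include among its computer checks.
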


These results shed some light on the precise relationship between instantons of rank 2 and Ulrich bundles of low rank on some special cubic fourfolds. This complements and (hopefully) clarifies  \cite{faenzi-kim, kim, truong-yen, Nuer} and connects to \cite{An--Cs1, An--Cs2} as well as \cite{Cat} and \cite{gs,bmt}.
As a side note, we mention that we rely on a computer-aided proof via Macaulay 2, \cite{Macaulay2}, when calculating the normal bundle of a surfaces in $\cC_{20}$ associated with an instanton of charge 2.

\begin{ack}
D. F. and F. G. wish to note that this paper grew out of a collaboration started by Gianfranco Casnati, our friend and mentor, shortly before his untimely and unexpected passing. It is dedicated to his beloved memory.

\bigskip
We would like thank J. Pons-Llopis, V. Antonelli and Y. Kim for their helpful comments. D. F. wishes to thank N. Addington for useful discussions about this topic.
\end{ack}

The structure of the paper is as follows. In \S
\ref{section:background} we recall some basic material about
instanton bundles, particularly of rank 2. In 
\S \ref{section:Pfaffians} we discuss Pfaffian
representations of hypersurfaces of $\bP^5$, with a view toward
discriminant lattices in their middle cohomology. More precisely, in \S
\ref{subsection:instantons and surfaces} we deal with instantons and
Steiner-Pfaffian representations of hypersurfaces of type $\cF^\vee$,
where $\cF$ is a Steiner bundle on $\bP^5$, while in 
\S \ref{Pfaffiandiscr} we look more generally at Pfaffian
hypersurfaces of type $(\cF^\vee,\ell)$, where $\cF$ is any vector
bundle on $\bP^5$. 
In \S \ref{section:ulrich} we discuss unobstructed instantons and the relationship between
instanton and Ulrich bundles on a cubic fourfold and show that the
 existence of the former implies the presence of the latter, after
taking acyclic extensions and deformations, under the assumption of unobstructedness.
Next, in \S \ref{section:instanton of rank 2} we analyse the
connection between instanton / Ulrich bundles and some special
Hassett divisors $\cC_\delta$. More precisely, in \S \ref{14:section} we
look at $\cC_{14}$, \S \ref{18-section} we deal with $\cC_{18}$, while 
 $\cC_{20}$ is studied \S \ref{20:section}.
Finally, in \S \ref{section:quartic} we briefly look at some
instantons on quartic fourfolds.

\section{Background}
\label{section:background}

\subsection{Notation and conventions} 

We work over an algebraically closed field $\field$ of characteristic different from 2. By \textit{
  projective variety}, we mean a closed, integral
subscheme  $X$ of some projective space $\bP^N$ over $\field$.
The restriction to $X$ of the hyperplane class $c_1(\cO_{\bP^N}(1))$ will usually be
denoted by $h$ and we say that $(X,h)$ is a \textit{polarised
  variety}.
Sometimes we use this terminology also when $h$ is ample and
$\cO_X(h)$ is globally generated.

An \textit{$n$-fold} is a smooth projective variety of dimension $n$ and a
\textit{surface} is a projective variety over $\field$ of dimension $2$.
We write $\cT_X$ and $\Omega_X$ for the tangent and cotangent sheaves of a projective
variety $X$, they are locally free of rank $n$ if $X$ is an $n$-fold.
In this case $\omega_X\cong\det(\Omega_X)$ and we denote by $K_X$ any divisor such that $\omega_X\cong\cO_X(K_X)$. 
A \textit{hypersurface} in $\bP^{n+1}$ is a projective subscheme of pure dimension $n$. 
If $Y\subseteq X$ is a closed subscheme of a variety $X$ we 
denote by
$\cI_{Y\vert X}$ the ideal sheaf in $\cO_X$ of $Y$. Let us record the
sequence
\begin{equation}
\label{seqStandard}
0\longrightarrow\cI_{Y\vert X}\longrightarrow\cO_X\longrightarrow\cO_Y\longrightarrow0.
\end{equation}

If $Z\subseteq Y\subseteq X$ are closed subschemes, then we have the exact sequence
\begin{equation}
\label{seqChain}
0\longrightarrow \cI_{Y\vert X}\longrightarrow \cI_{Z\vert
  X}\longrightarrow \cI_{Z\vert Y}\longrightarrow 0.
\end{equation}
Given $p \in \bQ[t]$, 
we denote by $\Hilb^{p}(X)$ the Hilbert scheme of closed subschemes
in $X$ with Hilbert polynomial $p$, with respect to the ample divisor
$h$, which we will silently fix.
Given a closed subscheme $Z\subseteq X$ and for all $t \in \bZ$, we denote by $\varrho^Z_t$ the natural restriction map
$$
H^0(\cO_X(t h))\longrightarrow H^0(\cO_Z(t h)).
$$
We say that a $\bk$-linear map $\rho$ has maximal rank, if it is either injective
or surjective. Note that, when $X=\bP^N$ with $N > 1$, $\varrho^Z_t$ has maximal rank if and only if either
$$
h^0(\cI_{Z\vert \bP^N}(t h)) \; \mbox{or} \; h^1(\cI_{Z\vert \bP^n}(t h))=0.
$$ 

Recall that if $Z\subseteq X$ is LCI (locally complete intersection), the conormal
sheaf $\cN_{Z\vert X}$ is locally free and, if $Z$ is smooth, we have
a normal bundle sequence:
\begin{equation}
\label{seqNormal}
0\longrightarrow\cT_Z\longrightarrow\cT_X|_Z\longrightarrow\cN_{Z\vert X}\longrightarrow0.
\end{equation}
If $Z\subseteq Y$ is a smooth closed subvariety, with $Y \subset X$ a closed LCI subscheme, we get
\begin{equation}
\label{seqChainNormal}
0\longrightarrow\cN_{Z\vert Y}\longrightarrow\cN_{Z\vert X}\longrightarrow\cN_{Y\vert X}\vert _Z\longrightarrow0,
\end{equation}
If $\vartheta\colon\cA\to\cB$ is a morphism of locally free sheaves on an $N$-fold $P$  we can define the degeneracy loci
$D(\vartheta)$ as the schemes supported on the set of points
$x \in P$ in which the evaluated morphism $\vartheta(x)$ has not maximal rank.
These have a natural reduced scheme structure induced by their
inclusion in $P$. One can generalizes the previous definition by considering the loci
$$
D_r(\vartheta):=\{x \in P : \rk (\vartheta _x) \leq r\}.
$$
These also have a reduced structure.
If $\vartheta$ is skew-symmetric, then $\rk(\vartheta_x)$ is even at each point $x\in P$. In particular if $r$ is even we know that $D_r(\vartheta)=D_{r+1}(\vartheta)$. Moreover, $D_{r}(\vartheta)$ is set-theoretically the locus of points of $P$ where the $r\times r$ Pfaffians of a local matrix of $\vartheta$ vanish. In order to take care of such a characterization we will denote these degeneracy loci by $\bV(\Pf_r(\vartheta))$. We often omit the subscript $r$ when referring to Pfaffians of maximal size.

\subsection{Reminder on Hartshorne-Serre's construction}

Let $\cA$ be a rank two vector bundle on an $N$-fold $P$ and let $\sigma\in H^0(\cA).$
In general its zero-locus
$D_0(\sigma)\subseteq P$ is either empty or its codimension is at most
$2$. We can always write $D_0(\sigma)=Z\cup S$
where $Z$ has codimension $2$ (or it is empty) and $S$ has pure codimension
 $1$ (or it is empty). In particular $\cA(-S)$ has a section vanishing
on $Z$, thus we can consider its Koszul complex 
\begin{equation}
  \label{seqSerre}
  0\longrightarrow \cO_P(S)\longrightarrow \cA\longrightarrow \cI_{Z\vert P}(-S)\otimes\det(\cA)\longrightarrow 0.
\end{equation}
Sequence (\ref{seqSerre}) tensored by $\cO_Z$ yields  
\begin{equation}
\label{Normal}
\cN_{Z\vert P}\cong\cO_Z\otimes\cA(-S).
\end{equation}
If $S=\emptyset$, then $Z$ is locally complete intersection inside $P$, because $\rk(\cA)=2$. In particular, $Z$ has no embedded components, see \cite[43, Corollaire I.6.11 2) 3)]{Jo}.
The following result reverts the above construction.

\begin{theorem}
  \label{tSerre}
Let $P$ be an $N$-fold with $N\ge2$ and $Z\subseteq P$ a local complete intersection subscheme of codimension $2$.
If $\det(\cN_{Z\vert P})\cong\cO_S\otimes\mathcal L$ for some $\mathcal L\in\Pic(P)$ such that $h^2(\mathcal L^\vee)=0$, then there exists a rank two vector bundle $\cA$ on $P$ satisfying the following properties.
  \begin{enumerate}[label=\roman*)]
  \item $\det(\cA)\cong\mathcal L$.
  \item $Z=D_0(\sigma)$ for some section of $\cA$.
  \end{enumerate}
  Moreover, if $H^1({\mathcal L}^\vee)= 0$, the above two conditions  determine $\cA$ up to isomorphism.
\end{theorem}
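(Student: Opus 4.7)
The plan is to realise $\cA$ as the middle term of an extension
\[
0\longrightarrow\cO_P\longrightarrow\cA\longrightarrow\cI_{Z\vert P}\otimes\mathcal L\longrightarrow0,
\]
so that tensoring with $\det$ gives property (i), and the composition $\cO_P\to\cA$ produces a global section $\sigma$ with $D_0(\sigma)=Z$, giving (ii). The whole argument thus reduces to producing an extension class in $\Ext^1_P(\cI_{Z\vert P}\otimes\mathcal L,\cO_P)$ for which the middle term is locally free of rank two.

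The first step is to compute local Ext sheaves. Since $Z\subseteq P$ is a local complete intersection of codimension $2$, the Koszul resolution of $\cO_Z$ shows that $\sExt^q(\cO_Z,\cO_P)=0$ for $q<2$ and $\sExt^2(\cO_Z,\cO_P)\cong\det(\cN_{Z\vert P})$. Using the standard sequence \eqref{seqStandard} and $\sExt^q(\cO_P,\cO_P)=0$ for $q\ge1$, this translates into
\[
\sHom(\cI_{Z\vert P}\otimes\mathcal L,\cO_P)\cong\mathcal L^\vee,\qquad \sExt^1(\cI_{Z\vert P}\otimes\mathcal L,\cO_P)\cong\det(\cN_{Z\vert P})\otimes\mathcal L^\vee\cong\cO_Z,
\]
where the last isomorphism is the hypothesis. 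The second step is to invoke the local-to-global spectral sequence for Ext, whose five term exact sequence reads
\[
H^1(\mathcal L^\vee)\longrightarrow \Ext^1_P(\cI_{Z\vert P}\otimes\mathcal L,\cO_P)\longrightarrow H^0(\cO_Z)\longrightarrow H^2(\mathcal L^\vee).
\]
By assumption $H^2(\mathcal L^\vee)=0$, so the constant section $1\in H^0(\cO_Z)$ lifts to a class $\xi\in\Ext^1_P(\cI_{Z\vert P}\otimes\mathcal L,\cO_P)$.

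The third and most delicate step is verifying that the middle term $\cA$ of the extension determined by $\xi$ is locally free of rank $2$. This is the classical Cayley-Bacharach/Hartshorne-Serre criterion: $\cA$ is locally free precisely when $\xi$ restricts at every point of $Z$ to a generator of the stalk of $\sExt^1(\cI_{Z\vert P}\otimes\mathcal L,\cO_P)\cong\cO_Z$. Since by construction the image of $\xi$ in $H^0(\cO_Z)$ is the unit section, this holds at every point, and the Koszul-type local presentation together with the LCI hypothesis shows that $\cA$ is indeed locally free of rank $2$. Outside $Z$ the sequence is automatically split and $\cA$ is free. This is the main technical obstacle; the rest is bookkeeping. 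Taking determinants in the extension yields $\det(\cA)\cong\mathcal L$, and the composition $\cO_P\hookrightarrow\cA$ is a section whose cokernel is $\cI_{Z\vert P}\otimes\mathcal L$, so that $D_0(\sigma)=Z$ scheme-theoretically.

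For uniqueness under the additional assumption $H^1(\mathcal L^\vee)=0$, the five term sequence above shows that the map $\Ext^1_P(\cI_{Z\vert P}\otimes\mathcal L,\cO_P)\to H^0(\cO_Z)$ is injective. Hence the extension class $\xi$ lifting the generator $1\in H^0(\cO_Z)$ is unique up to a nowhere-vanishing scalar, and two extensions differing by such a scalar produce isomorphic middle terms. Thus $\cA$ is determined up to isomorphism, completing the sketch.
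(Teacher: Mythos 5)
The paper gives no argument of its own for this statement—it simply cites Arrondo's Theorem~1.1—and your sketch is a correct outline of precisely the argument given in that reference: realise $\cA$ as an extension of $\cI_{Z\vert P}\otimes\mathcal L$ by $\cO_P$, compute $\sHom(\cI_{Z\vert P}\otimes\mathcal L,\cO_P)\cong\mathcal L^\vee$ and $\sExt^1(\cI_{Z\vert P}\otimes\mathcal L,\cO_P)\cong\cO_Z$ from the local Koszul resolution, lift the unit section of $\cO_Z$ through the local-to-global five-term sequence using $H^2(\mathcal L^\vee)=0$, and conclude local freeness from the everywhere-generating criterion (you also correctly read the statement's $\cO_S$ as $\cO_Z$). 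The only point stated more briskly than it deserves is the uniqueness step, where one should note that an arbitrary pair $(\cA,\sigma)$ satisfying i) and ii) yields an extension class mapping to some unit of $H^0(\cO_Z)$, not necessarily to $1$; but this is at the same level of detail as the cited source, so there is nothing of substance to compare.
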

\begin{proof}
See \cite[Theorem 1.1.]{Ar}.
\end{proof}

Let $X\subseteq\p N$ be a smooth degree $d$ hypersurface and let $\cO_X(h):=\cO_X\otimes\cO_{\p N}(h)$.
In this case $\cN_{X\vert \p N}\cong \cO_X(dh)$, hence \eqref{seqNormal} with $Z:=X \subseteq P:=\p N$ becomes
\begin{equation}
\label{seqTangentHypersurface}
0\longrightarrow\cT_X\longrightarrow\cO_X\otimes\cT_{\p N}\longrightarrow\cO_X(dh)\longrightarrow0.
\end{equation}
Moreover we also have the Euler sequence
\begin{equation}
\label{seqTangentP5}
0\longrightarrow\cO_{\p N}\longrightarrow\cO_{\p N}(1)^{\oplus N+1}\longrightarrow\cT_{\p N}\longrightarrow0.
\end{equation}
The two  sequences above return
\begin{equation}
\begin{gathered}
\label{ChernTangent}
c_1(\cT_X)=(N+1-d)h=-K_X,\\ c_2(\cT_X)=\left(d^2-(N+1)d+{{N+1}\choose2}\right)h^2. 
\end{gathered}
\end{equation}

Let $X\subseteq\p N$ be a smooth variety of dimension $n\ge1$. If $\cA$ is any torsion-free sheaf we define the {\sl slope of $\cA$ (with respect to $\cO_X(h):=\cO_X\otimes\cO_{\p N}(1)$)} as
$$
\mu(\cA):=\frac{c_1(\cA)h^{n-1}}{\rk(\cA)}.
$$
The torsion-free sheaf $\cA$ is {\sl$\mu$-semistable} (resp. {\sl $\mu$-stable}) if for all subsheaves $\cB$ with $0<\rk(\cB)<\rk(\cA)$ we have $\mu(\cB) \le \mu(\cA)$ (resp. $\mu(\cB)<\mu(\cA)$). 
For any divisor $D$ we denote by $\fM_X(2;D,c_2)$ the moduli space of $\mu$-stable, rank two vector bundles $\cE$ on $X$ with $c_1(\cE)=D$ and $c_2(\cE)h^2=c_2$.

\subsection{Instanton sheaves}\label{subsection:Instantons}

Let us recall the following definition from \cite{An--Cs1}. The instanton sheaves as defined below are called \textit{ordinary} in \cite{An--Cs1}, i.e., in the terminology of loc. cit., their defect $\delta$ is zero.

\begin{definition}
\label{dInstanton}
Let $X$ be a projective variety of dimension $n\ge1$ endowed with an ample and globally generated line bundle $\cO_X(h)$.
A non-zero coherent sheaf  $\cE$ on $X$ is called (ordinary) $h$-instanton sheaf with charge $\quantum\in\bZ$ if the following properties hold:
\begin{itemize}
\item $h^0\big(\cE(-h)\big)=h^n\big(\cE(-nh)\big)=0$;
\item $h^i\big(\cE(-(i+1)h)\big)=h^{n-i}\big(\cE(-(n-i)h)\big)=0$ if $1\le i\le n-2$;
\item $h^1\big(\cE(-h)\big)=h^{n-1}\big(\cE(-nh)\big)=\quantum$.
\end{itemize}
If the polarisation $h$ is clear, we will drop it from the wording. 
\end{definition}

Note that, according to this definition, $\cE$ is an Ulrich sheaf for the polarisation $h$ if and
only if it is an $h$-instanton bundle of charge zero.

\begin{remark}
    A comparison of the above definition against the previous notions of instanton sheaf, notably on projective threefolds, is made in 
    \cite[\S 1]{antonelli-malaspina}, let us summarize the discussion here. Let $X$ be a smooth  Fano threefold  of Picard rank one, so that $\Pic(X)=\langle h_X \rangle$, with $h_X$ ample.
    Let $\cE$ be a slope-semistable $h_X$-instanton sheaf $X$. Let $i_X$ be the index of $X$, so that $\omega_X\simeq \cO_X(-i_Xh_X)$ and write $i_X=q_X+e_X$ with $e_X \in \{1,2\}$. Then $\cE((q_X-2)h_X)$ is an instanton sheaf according to 
\cite{faenzi-instanton, faenzi-comaschi}. 
The converse is true up to some caveat about stability and global sections of $\cE$. Again, we refer to \cite[\S 1]{antonelli-malaspina} for details.

Also, according to some authors, instantons should have a prescribed rank (e.g. in \cite{ancona-ottaviani:instantons}) and may or may not have trivial direct summands.

Again about Fano threefolds, it should be noted that the notion of charge proposed in \cite{faenzi-instanton, faenzi-comaschi}
 may be a bit shifted with respect to the one we give here. For example, when $i_X=2$ and $\cE$ is a slope-semistable instanton of rank $r$ and charge $k$ according to the previous definition, then $\cE(-1)$ is an instanton sheaf of charge $r-k$ according to loc. cit.
\end{remark}

\begin{lemma}
  Let $X$ be a projective variety of dimension $n\ge1$ endowed with an ample and globally generated line bundle $\cO_X(h)$.
Let $\cE$ be an instanton sheaf of rank $r$ and charge $\quantum$. Then
  we have
  \begin{align}
    \label{Slope}
    c_1(\cE) \cdot h^{n-1}& =\frac 12 r((n+1)h-c_1(X)) \cdot h^{n-1}, \\
  \label{Chern}
c_2(\cE) \cdot h^{n-2}&=\quantum + \frac12 c_1(\cE)\cdot (c_1(\cE)+c_1(X)) \cdot h^{n-2} + \\
           \nonumber           &+\frac{1}{12} r
                                 \left(c_1(X)^2+c_2(X)-\frac 12 (3n+2)(n+1) h^2\right)\cdot h^{n-2}.
\end{align}
\end{lemma}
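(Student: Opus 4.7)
Both identities are intersection numbers of degree $n-1$ and $n-2$, so they are encoded in the first two subleading coefficients of the Hilbert polynomial $P_\cE(t) := \chi(\cE(th))$. The strategy is to compute $P_\cE(t)$ in two independent ways and compare. On the one hand, Hirzebruch--Riemann--Roch expresses $P_\cE(t)$ as a degree-$n$ polynomial in $t$ whose $t^{n-1}$ coefficient packages $c_1(\cE)\cdot h^{n-1}$ (with a $c_1(X)$ correction) and whose $t^{n-2}$ coefficient packages $c_2(\cE)\cdot h^{n-2}$ together with Todd-class contributions. On the other, the instanton cohomology conditions --- read with the ``ordinary'' vanishings in the table of \cite{An--Cs1} (defect $\delta=0$) --- evaluate $\chi(\cE(-jh))$ at $j=1,\ldots,n$:
\[
\chi(\cE(-jh)) \;=\; \begin{cases} -\quantum,& j=1,\\ 0,& 2\le j\le n-1,\\ (-1)^{n-1}\quantum,& j=n.\end{cases}
\]

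Combined with the known leading coefficient $rh^n/n!$, these $n$ evaluations pin down $P_\cE$ uniquely by Lagrange interpolation; a direct check identifies
\[
P_\cE(t) \;=\; \frac{rh^n}{n!}\prod_{j=1}^n(t+j)\;-\;\frac{\quantum}{(n-2)!}\prod_{j=2}^{n-1}(t+j),
\]
the first summand being the familiar Ulrich Hilbert polynomial and the second a correction of order $\quantum$ supported in degree $n-2$. Reading off the coefficient of $t^{n-1}$ on both sides and matching with the HRR expression $\frac{h^{n-1}}{(n-1)!}(c_1(\cE)+\tfrac{r}{2}c_1(X))$ yields \eqref{Slope} immediately (the correction term does not contribute, since it has degree $n-2$). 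For the $t^{n-2}$ coefficient I would use the elementary symmetric identity $e_2(1,\ldots,n)=n(n+1)(n-1)(3n+2)/24$ on the explicit expression, plug in $\ch_2(\cE)=\tfrac12 c_1(\cE)^2-c_2(\cE)$, $\td_1(X)=c_1(X)/2$, $\td_2(X)=(c_1(X)^2+c_2(X))/12$ on the HRR side, and rearrange to obtain \eqref{Chern}.

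The delicate point is the first step: the three bullets of Definition~\ref{dInstanton} prescribe only two cohomology levels per twist $\cE(-jh)$, so by themselves they do not force the values of $\chi(\cE(-jh))$. What makes the calculation go through is precisely the ``ordinary'' (defect zero) hypothesis recalled from \cite{An--Cs1}, which is equivalent to the vanishing of the remaining entries in the cohomology table of $\cE(-jh)$ for $1\le j\le n$. Granted this input, the rest is a routine Riemann--Roch bookkeeping, and the computation of the single numerical coefficient $(n+1)(3n+2)$ visible in \eqref{Chern} comes out of $e_2(1,\ldots,n)$ after a one-line algebraic simplification.
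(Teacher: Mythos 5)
Your route---determining the Hilbert polynomial $P_\cE(t)=\chi(\cE(th))$ by interpolation from its values at $t=-1,\dots,-n$ and then matching coefficients against Hirzebruch--Riemann--Roch---is genuinely different from the paper's, and the polynomial you write down is the correct one (it agrees with \eqref{ChiE}). But the step you yourself single out as delicate is a real gap, and your proposed fix does not work. In the terminology of \cite{An--Cs1}, ``ordinary'' (defect zero) means only that $h^1(\cE(-h))=h^{n-1}(\cE(-nh))$; this is already the third bullet of Definition \ref{dInstanton} and is \emph{not} equivalent to the vanishing of the remaining entries of the cohomology table. For each twist $\cE(-jh)$ with $1\le j\le n$ the definition controls exactly two cohomology groups, namely $h^{j-1}$ and $h^{j}$ (plus the top and bottom rows, which follow by twisting); the groups $h^i(\cE(-jh))$ with $1\le i\le n-1$ and $i\notin\{j-1,j\}$ --- for instance $h^2(\cE(-h))$ when $n\ge 4$, or $h^3(\cE(-2h))$ --- are simply not constrained by the hypotheses of the lemma, which is stated for arbitrary instanton sheaves on arbitrary polarised varieties. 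Consequently none of the Euler characteristics $\chi(\cE(-jh))$ is determined by the definition, and your Lagrange interpolation has no legitimate input. (These $\chi$-values are in fact true, but establishing them is essentially equivalent to proving \eqref{ChiE}, i.e.\ to \cite[Corollary 4.3]{An--Cs1}.)

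The paper circumvents this by cutting with $n-2$ general members of $|h|$: on the resulting surface $S$ only $h^0,h^1,h^2$ exist, and the Koszul complex shows that the vanishings of Definition \ref{dInstanton} are \emph{exactly} the ones needed to get $h^0(\cE(-h)|_S)=0$, $h^1(\cE(-h)|_S)=\quantum$ and $h^2(\cE(-h)|_S)=0$, hence $\chi(\cE(-h)|_S)=-\quantum$. In your language this yields the single finite-difference relation $\sum_{p=0}^{n-2}(-1)^p\binom{n-2}{p}\chi(\cE(-(p+1)h))=-\quantum$, which, combined with the known $t^n$-coefficient and with \eqref{Slope} (which the paper does not reprove but quotes from \cite[Theorem 1.6 and Remark 6.5]{An--Cs1}), determines precisely the $t^{n-2}$-coefficient of $P_\cE$ and hence $c_2(\cE)\cdot h^{n-2}$ --- and nothing more. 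Your argument becomes correct if you replace the $n$ pointwise evaluations by this one relation, or if you cite \eqref{ChiE} outright; as written it is not a proof.
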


\begin{proof}
The formula concerning $c_1(\cE)$ is obtained by \cite[Theorem 1.6 and Remark 6.5]{An--Cs1}.
About $c_2(\cE)$, we consider a sufficiently general choice of $n-2$
global sections of $\cO_X(h)$. Their base locus is a smooth surface $S
\subset X$ and tensoring the associated Koszul complex by $\cE(-h)$ we
get a long exact sequence: 
\[
  0 \to \cE((2-n)h) \to \cdots \to \cE_X(-ph) ^{\oplus {{n-2} \choose p} }\to
    \cdots  \to \cE(-h) \to \cE(-h)|_S \to 0.
  \]
Using the conditions of Definition \ref{dInstanton} we get
$h^0(\cE(-h)|_S)=0$ and $h^1(\cE|_S(-h))=k$. Also, we get
$h^2(\cE(-2h)|_S)=0$, which in turn implies $h^2(\cE(-h)|_S)=0$.
Therefore $\chi(\cE(-h)|_S)=-k$. 
Then the desired formula is computed
by writing down the Chern classes of $S$ in terms of those of $X$ by the normal bundle sequence
and computing $\chi(\cE(-h)|_S)$ by using Riemann-Roch on $S$.
\end{proof}

\begin{remark}
Note that, if $\Pic(X)$ is generated by $\cO_X(h)$, then \eqref{Slope} implies
$$
c_1(\cE)=\frac{r}2((n+1)h+K_X)
$$
for each $h$-instanton bundle on $X$. For instance if
$X\subseteq\p{n+1}$ is a smooth hypersurface of degree $d$ and dimension $n\ge3$, then 
\begin{equation}
\label{c1}
c_1(\cE)=\frac{r}2(d-1)h.
\end{equation}
By \cite[Remark 3.3]{An--Cs1} and \cite[Corollary 4.3]{An--Cs1}, we also have
\begin{equation}
\label{ChiE}
\chi(\cE(th))=(r h^n+2\quantum){{t+n}\choose n}-\quantum{{t+n+1}\choose n}-\quantum{{t+n-1}\choose n}.
\end{equation}

Moreover $\cE(\quantum h)$ is regular in the sense of Castelnuovo-Mumford by \cite[Proposition 5.1]{An--Cs1}, hence it is globally generated.

\end{remark}

 \section{Pfaffians representations and rank-2 instantons on hypersurfaces}

 \label{section:Pfaffians}

Here we discuss Pfaffian representations of smooth hypersurfaces $X
   \subset \bP^N$ and
   their connection with rank-2 instantons.
   Our main focus is on the case of fourfolds, i. e. $N=5$.
      We say that a skew-symmetric morphism of vector bundle $\varphi : \cF(-\ell) \to
   \cF$ gives a \textit{Pfaffian representation} of $X$ \textit{of type $(\cF^\vee,\ell$)} if the
   equation of $X$ is the Pfaffian of $\varphi$. In this situation, the sheaf
   $\cE=\coker(\varphi)$ is typically a rank-2 instanton bundle on $X$, and the
   datum of this bundle alone allows, under suitable conditions, to
   recover the morphism $\varphi$. Moreover, under the cohomological
   constraints corresponding to the fact that $\cE$ is an instanton
   bundle, the vector bundle $\cF$ takes a specific form, namely $\cF$ is a Steiner bundle, and the
   Pfaffian representation becomes particularly interesting.
   
Throughout the whole section, $X$ is a smooth hypersurface in  $\bP^N$ with $N \ge 4$. Assume that $X$ carries an instanton bundle $\cE$. Then the integral cohomology $H^{2,2}(X,\bZ)=H^{2,2}(X)\cap H^4(X,\bZ)$ inherits the distinguished elements $c_2(\cE)$. 
When $N=5$, the cup product gives $H^{2,2}(X,\bZ)$ a lattice structure and 
it is natural to study the discriminant of the sublattice generated by $h_X^2$ and $c_2(\cE)$. One way to do it is to take a surface $Y$ arising as zero-locus of a global section of $\cE(s)$ for some $s \in \bN$, and to replace $c_2(\cE)$ by $Y$. Otherwise we may compute this directly from the classes of $\cF$ using Grothendieck-Riemann-Roch.
      We deal with instanton bundles of rank 2 and their associated surfaces $Y$ in \S \ref{subsection:instantons and surfaces}, while 
    the more general case of \textit{Pfaffian representation} of $X$ \textit{of type $(\cF^\vee,\ell$)} is treated using the second approach in \S \ref{Pfaffiandiscr}.

 \bigskip
 As we said, we will be mostly interested in the case
  $N=5$ and more precisely on the case $(N,d)=(5,3)$,
  namely when $X$  is a smooth cubic fourfold. We will use the
  definition of instanton given in \S \ref{subsection:Instantons}. It is
  straightforward to check that, if
  $\cE$ is an instanton bundle of rank $r$ and charge $\quantum$ on a
  cubic fourfold $X$, then we have:
  \[
    c_1(\cE(-1))=0.
  \]
  Moreover we have the
  following table for the cohomology of $\cE(t)$.
  \[
    \begin{array}{c||c|c|c|c|c|c}
      t & -5 & -4 &-3 &-2 &-1 & 0 \\
      \hline
      \hline
      h^4(\cE(t)) & \ast & 0 & 0 & 0 & 0 & 0\\
      \hline
      h^3(\cE(t)) & \ast & k & 0 & 0 & 0 & 0 \\
      \hline
      h^2(\cE(t)) & 0 & 0 & 0 & 0 & 0 & 0\\
      \hline
      h^1(\cE(t)) & 0 & 0 & 0 & 0 & k & \ast \\
      \hline
      h^0(\cE(t)) & 0 & 0 & 0 & 0 & 0 & \ast
    \end{array}
  \]
The stars represent some indeterminate values.

\subsection{Instanton bundles on hypersurfaces and associated surfaces}

\label{subsection:instantons and surfaces}

Let $N \ge 4$ be an integer and let $X\subseteq \p N$ be a smooth
degree $d$ hypersurface.
Set $\cO_X(h):=\cO_X\otimes\cO_{\p N}(1)$. For any $t \in \bZ$ and
given a coherent sheaf $\cF$ on $X$, we write freely
 $\cO_X(t)$ for $\cO_X(t h)$ and $\cF(t)$ for $\cF \otimes \cO_X(t)$.
This should generate no confusion, particularly since $\Pic(X)$ is generated by $\cO_X(h)$, see for instance
\cite[Example 3.1.25]{Laz1}.
For the same reason, we remove the reference to the polarisation when discussing instantons
on $X$.

\begin{lemma}
\label{pStable}
A rank-two instanton bundle $\cE$ on $X$ is $\mu$-stable.
\end{lemma}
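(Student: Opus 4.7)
The plan is to combine the Chern class formula \eqref{c1} with the vanishing $h^0(\cE(-h))=0$ from the instanton definition, and to show that a destabilising rank-one subsheaf must violate this vanishing after twisting.

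First, since $X\subset \bP^N$ with $N\ge 4$ is a smooth hypersurface, the Lefschetz hyperplane theorem yields $\Pic(X)=\bZ\cdot [h]$, so \eqref{c1} applies and gives $c_1(\cE)=(d-1)h$. Hence
\[
\mu(\cE)=\tfrac12 (d-1)\, h^{n},
\]
where $n=N-1\ge 3$.

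Next, suppose for contradiction that $\cE$ is not $\mu$-stable. Then there exists a rank-one subsheaf $\cL\subset\cE$ with $\mu(\cL)\ge \mu(\cE)$. Replacing $\cL$ by its saturation in $\cE$ only increases $c_1(\cL)\cdot h^{n-1}$, because the quotient is supported in codimension $\ge 1$ with effective codimension-one part; so we may assume $\cL$ is reflexive. A reflexive rank-one sheaf on a smooth variety is a line bundle, and by $\Pic(X)=\bZ\cdot[h]$ we get $\cL\cong\cO_X(m)$ for some integer $m$. The destabilising inequality becomes
\[
m\;\ge\;\frac{d-1}{2},
\]
and since $m\in\bZ$ and $d\ge 2$, this forces $m\ge 1$.

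The embedding $\cO_X(m)\hookrightarrow\cE$ corresponds to a nonzero section of $\cE(-m)$. Pick a hyperplane $H\subset\bP^N$ whose restriction to $X$ is an integral divisor; tensoring the Koszul sequence for $H$ by $\cE(-m)$ and using that $\cE$ is locally free, we get a short exact sequence
\[
0\longrightarrow \cE(-m)\longrightarrow \cE(-m+1)\longrightarrow \cE(-m+1)|_{X\cap H}\longrightarrow 0,
\]
and thus $h^0(\cE(-m))\le h^0(\cE(-m+1))$. Iterating $m-1$ times produces
\[
0< h^0(\cE(-m))\;\le\; h^0(\cE(-h))=0,
\]
a contradiction. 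The main subtlety is the reduction to a line-bundle subsheaf of the form $\cO_X(m)$ while preserving the slope inequality; once this is settled, the hyperplane-multiplication trick pushes a putative destabilising section down to $\cE(-h)$, where the instanton vanishing closes the argument.
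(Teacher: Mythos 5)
Your proof is correct and takes essentially the same route as the paper: the paper invokes Hoppe's criterion (for a rank-two bundle on a variety with $\Pic(X)=\bZ h$, $\mu$-stability is equivalent to $h^0$ of the normalized twist vanishing) and concludes from $h^0(\cE(-1))=0$ after twisting by $-\lceil (d-1)/2\rceil$, whereas you reprove that criterion by hand via saturation to a line-bundle subsheaf $\cO_X(m)$ and then run the same twist-down argument to reach $h^0(\cE(-1))=0$. The only difference is that your argument is self-contained where the paper cites Hoppe's lemma as a black box.
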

\begin{proof}
In view of  \cite[Lemma 2.6]{Hop},
it is easy to see that whenever $\cO_X(h)$ generates $\Pic(X)$ and
$\cA$ is a rank two vector bundle on $X$ with $c_1(\cA) \in \{-h,0\}$,
$\mu$-stability of $\cA$ is equivalent to the condition $h^0(\cA)=0$.
Here we have that $\Pic(X)$ is generated by $\cO_X(h)$ and, setting
$q:=\lceil\frac{d-1}2\rceil$, 
also that $c_1(\cE(-qh)) \in \{-h,0\}$. However, we have $q \ge 1$ so the
condition $h^0(\cE(-1))=0$ guarantees that $h^0(\cE(-q))=0$ and $\cE$ is stable.
\end{proof}

\noindent
The following example is an immediate byproduct of the  Theorem \ref{tSerre}.

\begin{example}
\label{eSerre}
Set $N=5$, $d \in \{2, 3\}$ and assume that $X$ contains an isomorphic projection $Y$ of a Del Pezzo
surface of degree $5+\quantum$ inside $\p{5+\quantum}$ for some
integer $\quantum \in\{0,1,2,3,4\}$.
Then, $\det(\cN_{Y\vert X})\cong \cO_Y(5-d)$, hence there is a unique rank two bundle $\cA$ fitting into 
$$
0\longrightarrow\cO_X\longrightarrow\cA\longrightarrow\cI_{Y\vert X}(5-d)\longrightarrow0,
$$
thanks to Theorem \ref{tSerre}. Let $\cE:=\cA(d-3)$, hence $c_1(\cE)=(d-1)h$. Notice that
$$
h^0(\cE(-1))=h^0(\cA(d-4)=0
$$
because $Y$ is non-degenerate and
$$
h^1(\cE(-2))=h^1(\cA(d-5))=h^1(\cI_{Y\vert X})=0.
$$
Moreover, the above sequence tensored by
$\cO_X(d-6)$ and  \eqref{seqStandard} tensored by $\cO_X(-1)$
give 
$$
h^2(\cE(-3))=h^2(\cA(d-6))=h^2(\cI_{Y\vert X}(-1))=h^1(\cO_Y(-1))=0.
$$
Arguing similarly
$$
h^1(\cE(-2))=h^1(\cA(d-4))=h^1(\cI_{Y\vert X}(1))=\quantum.
$$
By  \cite[Proposition 6.7]{An--Cs1} we deduce that $\cE$ is a rank two instanton bundle with charge $\quantum$.
\end{example}

Let us summarize the situation about surfaces obtained as zero-loci of
twisted sections of rank two instantons. We set $H^{2,2}(X,\bZ):=H^{2,2}(X)\cap H^4(X,\bZ)$.
In the statement, according to the standard terminology, a surface is called \textit{minimal} if it contains no $(-1)$-curves.

\begin{proposition}
\label{pSurfaceY}
Set $N=5$ and assume that $\cE$ is a rank two instanton bundle with charge $\quantum$ on $X$ and let $\sigma\in H^0(\cE(s))$ be such that its zero-locus $Y:=(\sigma)_0\subseteq X$ has pure dimension $2$. Then the following assertions hold.
\begin{enumerate}
\item $Y$ is a locally complete intersection surface in $X$ and $\varrho^Y_t$ has maximal rank if  $t\le d+s-2$ and $t\ge \quantum+d+s-2$.
\item If $d+s\ge3$, then $Y$ is connected, non-degenerate and $q(Y)=0$. Moreover, it is linearly normal unless possibly if $d+s=3$ and $\quantum\ge1$: in this case
\begin{equation}
\label{Normal1}
h^0(\cO_Y(1))=h^0(\cO_{\p5}(1))+\quantum.
\end{equation}
\item If $d\ge3$, then $Y$ is not contained in any quadric unless possibly if $d+s=3$: in this case
\begin{equation}
\label{Quadric}
h^0(\cI_{Y\vert\p5}(2))=\left\lbrace\begin{array}{ll} 
h^0(\cE)-1
\quad&\text{if $s=0$,}\\
h^0(\cE)\quad&\text{if $s\ne0$.}
\end{array}\right.
\end{equation}
\item The canonical line bundle of $Y$ is 
\begin{equation}
\label{Omega}
\omega_Y\cong\cO_Y(2d+2s-7),
\end{equation}
hence $Y$ is minimal, if $d+s\ge4$.
\item The degree of $Y$ is 
\begin{equation}
\label{Degree}
\deg(Y)=ds^2 +d(d-1)s+\frac d6(2d^2-3d+1)+k.
\end{equation}
\item The Euler characteristic of $Y$ is
\begin{equation}
\label{Chi}
\begin{aligned}
\chi(\cO_Y)&=\frac{7}{12}ds^4+
\frac d6(10d-{25})s^3+
\frac d{12}(22d^2-102d+{115})s^2\\
&+
\frac d{12}({11}d^3-70d^2+{137}d-78)s\\
&+
\frac d{120}(22d^4   -175d^3    +460d^2    -425d   +118)\\
&+\frac12k(s^2+(2d-7)s+d^2-7d+12).
\end{aligned}
\end{equation}
\item The self-intersection of $Y$ in  $H^{2,2}(X,\bZ)$ is

\begin{equation}
\label{Self}
\begin{aligned}
Y^2&=ds^4+2d(d-1)s^3+
\frac d3(5d^2-9d+4)s^2\\
&+\frac d3(2d^3 -5d^2+4d -1)s+
\frac d{30}(4d^4   -10d^3  +10d^2   -5d   +1)\\
&+\quantum(2s^2+2(d-1)s+d^2-d-1).
\end{aligned}
\end{equation}
\end{enumerate}
The surface $Y$ is actually smooth (in characteristic zero) if $\sigma$ is sufficiently general
and $\cE(s)$ is globally generated, which happens for instance if $s=k$.
\end{proposition}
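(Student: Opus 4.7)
The plan is to exploit the Koszul complex
\[
0 \to \cO_X(-2s-d+1) \to \cE(-s-d+1) \to \cI_{Y\vert X} \to 0,
\]
attached to $\sigma\in H^0(\cE(s))$, together with adjunction on $Y$ and the cohomology table of $\cE$ displayed just above the subsection. The Koszul is available because $\cE(s)$ has rank two and $Y$ has pure codimension two; combined with $c_1(\cE)=(d-1)h$ from \eqref{c1} one gets $\det(\cE(s))^{-1}=\cO_X(-2s-d+1)$ and $\cE(s)^\vee=\cE(-s-d+1)$. Item \textbf{(1)}: the LCI property and the absence of embedded components are automatic from rank two and pure codimension two (compare with \eqref{Normal}); the maximal-rank statement for $\varrho_t^Y$ is equivalent to vanishing of $h^0(\cI_{Y\vert X}(t))$ for $t\le d+s-2$ (injectivity) or $h^1(\cI_{Y\vert X}(t))$ for $t\ge k+d+s-2$ (surjectivity), both obtained by twisting the Koszul complex and reading the instanton cohomology table, the surjectivity range using Serre duality. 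Item \textbf{(2)}: the vanishings $h^0(\cI_{Y\vert X})=h^1(\cI_{Y\vert X})=0$ and $h^2(\cI_{Y\vert X})=h^1(\cO_X)=0$ yield connectedness, non-degeneracy and $q(Y)=0$; the exceptional case $d+s=3$, $k\ge 1$ is pinned down by $h^1(\cE(-1))=k$, which produces exactly \eqref{Normal1}. Item \textbf{(3)}: the sequence $0\to\cI_{X\vert\bP^5}(2)\to\cI_{Y\vert\bP^5}(2)\to\cI_{Y\vert X}(2)\to 0$ together with the twisted Koszul reduces to bookkeeping between $s=0$ and $s\neq 0$, giving \eqref{Quadric}. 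Item \textbf{(4)} is pure adjunction: $\omega_Y\cong\omega_X\vert_Y\otimes\det\cN_{Y\vert X}=\cO_X(d-6)\vert_Y\otimes\det\cE(s)\vert_Y=\cO_Y(2d+2s-7)$, and minimality for $d+s\ge 4$ follows since $\omega_Y$ is then nef.

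Items \textbf{(5)}--\textbf{(7)} are Chern-class computations on $X$. Because $[Y]=c_2(\cE(s))$ in $H^4(X,\bZ)$, I have $\deg(Y)=c_2(\cE(s))\cdot h^2$ and $Y^2=c_2(\cE(s))^2$. Expanding $c_2(\cE(s))=c_2(\cE)+s(d+s-1)h^2$ and intersecting with $h^2$, formula \eqref{Degree} reduces to plugging \eqref{Chern} with $n=4$, $r=2$, $c_1(\cE)=(d-1)h$ and \eqref{ChernTangent} for $c_1(X),c_2(X)$. For \textbf{(6)} I would use $\chi(\cO_Y)=\chi(\cO_X)-\chi(\cI_{Y\vert X})$, and then $\chi(\cI_{Y\vert X})=\chi(\cE(-s-d+1))-\chi(\cO_X(-2s-d+1))$ from the Koszul, reading the first summand off \eqref{ChiE} and the second from the Koszul resolution of $\cO_X$ in $\bP^5$. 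For \textbf{(7)}, after the expansion
\[
c_2(\cE(s))^2=c_2(\cE)^2+2s(d+s-1)\,h^2 c_2(\cE)+s^2(d+s-1)^2\,h^4,
\]
the only new quantity is $c_2(\cE)^2$; since $\cE$ has rank two, $\mathrm{ch}_4(\cE)=(c_1^4-4c_1^2c_2+2c_2^2)/24$, and matching Hirzebruch--Riemann--Roch for $\chi(\cE(th))$ against \eqref{ChiE} determines $\mathrm{ch}_4(\cE)$, hence $c_2(\cE)^2$, and yields \eqref{Self}.

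Smoothness of the generic $Y$ under global generation of $\cE(s)$ is Bertini on $X$ in characteristic zero, and the special case $s=k$ is covered by the Castelnuovo--Mumford $0$-regularity of $\cE(kh)$ recalled from \cite[Proposition 5.1]{An--Cs1}, which implies global generation. The main obstacle I anticipate lies in \textbf{(7)}: the degree-four part of Hirzebruch--Riemann--Roch pairs $\mathrm{ch}(\cE)$ with the full Todd class of $X$ up to $\mathrm{td}_4$ and mixes with all lower Chern data of both $\cE$ and $X$, so extracting $c_2(\cE)^2$ cleanly and simplifying the resulting polynomial in $d$, $s$, $k$ into the shape of \eqref{Self} will be the most error-prone step. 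Cross-checking via the Noether formula on $Y$ (using the already established $\chi(\cO_Y)$, the value $K_Y^2=(2d+2s-7)^2\deg(Y)$ from \textbf{(4)}--\textbf{(5)}, and an independent computation of $c_2(Y)$ through $c(\cT_X)\vert_Y=c(\cT_Y)c(\cN_{Y\vert X})$) will provide a useful sanity check.
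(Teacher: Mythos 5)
Your proposal is essentially the paper's proof for items (1)--(6) and the smoothness statement: the Koszul sequence you write is exactly the paper's sequence \eqref{seqIdeal} twisted by $\cO_X(1-d-2s)$, the maximal-rank ranges, the vanishings for connectedness and non-degeneracy, the adjunction computation of $\omega_Y$, the expansion of $c_2(\cE(s))\cdot h^2$ via \eqref{Chern}, and the identity $\chi(\cO_Y)=\chi(\cO_X)-\chi(\cE(1-d-s))+\chi(\cO_X(1-d-2s))$ all match. (Two small bookkeeping points the paper makes explicit and you elide: passing from $\cI_{Y\vert X}$ to $\cI_{Y\vert \bP^5}$ via \eqref{seqChain} to conclude about $\varrho^Y_t$, and citing \cite[Proposition III.2.2]{B--H--P--VV} for minimality from nefness of $\omega_Y$.) Where you genuinely diverge is item (7): you compute $Y^2=c_2(\cE(s))^2$ by extracting $c_2(\cE)^2$ from $\mathrm{ch}_4(\cE)$, matching Hirzebruch--Riemann--Roch for $\chi(\cE(th))$ against \eqref{ChiE}; this works, since the only unknown degree-eight class appearing in the integral is $c_2(\cE)^2$ (the terms $c_1c_2\cdot\mathrm{td}_1$ and $c_1^2c_2\cdot\mathrm{td}_0$ reduce to the known number $c_2(\cE)h^2$ because $c_1(\cE)$ is a multiple of $h$). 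The paper instead writes $Y^2=c_2(\cN_{Y\vert X})$, runs the normal bundle sequence through the Noether formula on $Y$ to get $Y^2=(d^2-6d+15)h_Y^2+(6-d)h_YK_Y+2K_Y^2-12\chi(\cO_Y)$, and substitutes the already-established \eqref{Omega}, \eqref{Degree}, \eqref{Chi} --- precisely the computation you relegate to a ``sanity check.'' The paper's route is less error-prone because it recycles quantities already computed and never touches $\mathrm{td}_4(X)$; your route is more self-contained on $X$ but, as you anticipate, requires careful expansion of the full Todd class. Both are valid.
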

\begin{proof}
We first notice that $s\ge0$, because  $h^0(\cE(-1))=0$ by definition.
Moreover, we know that $Y$ has pure dimension $2$ by hypothesis, hence it is a locally complete intersection subscheme of $X$ and there exists an exact sequence of the form
\begin{equation}
\label{seqIdeal}
0\longrightarrow\cO_X\longrightarrow\cE(s)\longrightarrow\cI_{Y\vert X}(d+2s-1)\longrightarrow0,
\end{equation}
thanks to Theorem \ref{tSerre} and \eqref{c1}. Thus the cohomology of \eqref{seqIdeal} tensored by $\cO_X(1-d-2s+t)$ yields

\[
0\to H^0(\cO_X(1-d-2s+t))
\to H^0(\cE(1-d-s+t))\to H^0(\cI_{Y\vert X}(t))\to0 
\]
Also $h^1(\cI_{Y\vert X}(t))= h^1(\cE(1-d-s+t))$.
We infer that $h^1(\cI_{Y\vert X}(t))=0$ in the ranges $i=2$ and $t\le d+s-3$, $i=1$ and $t\ge \quantum+d+s-2$, $i=0$ and $t\le d+s-2$, because $\cE$ is an instanton. 

If $\cO_Y(h_Y):=\cO_Y\otimes\cO_{\p5}(1)$, then the cohomology of \eqref{seqStandard}
and the isomorphisms $ H^0(\cO_{\p5}(t))\cong H^0(\cO_{X}(th))$ imply that $\varrho^Y_t$ has maximal rank in the same range.

Taking into account the above vanishings, $h^i(\cI_{Y\vert X})=0$ for $i\le2$ when $d+s\ge3$, hence $q(Y)=0$ and $h^0(\cO_Y)=1$: consequently $Y$ is connected. Similarly $h^0(\cI_{Y\vert X})=0$ when $d+s\ge3$, hence $Y$ is non-degenerate. When $d+s\ge4$, then $h^1(\cE(1))=0$ whence $Y$ is also linearly normal. When $d+s=3$, then $h^1(\cE(2-d-s))=\quantum$, whence \eqref{Normal1}. 

If $d\ge3$, writing  \eqref{seqChain} in our setting and taking
cohomology yields
$h^0(\cI_{Y\vert \p5}(2))=h^0(\cI_{Y\vert X}(2))$, hence \eqref{Quadric} follows trivially.

Thanks \eqref{Normal} we have $\cO_Y\otimes\cE\cong\cN_{Y\vert X}$, hence the adjunction formula on $X$ and \eqref{ChernTangent} with $N=5$ yield
$$
\omega_Y\cong\det(\cN_{Y\vert X})\otimes\omega_X\cong\cO_Y(2d+2s-7),
$$
which is \eqref{Omega}. Notice that \eqref{Omega} and the Nakai criterion imply the minimality of $Y$ when $2s+2d\ge7$, thanks to \cite[Proposition III.2.2]{B--H--P--VV}.

The class of $Y$ in $H^{2,2}(X,\bZ)$ is $c_2(\cE(s))$, whence
$$
\deg(Y)=c_2(\cE(s))h^2=c_2(\cE)h^2+s c_1(\cE)h^3+s^2 d
$$
because $h^4=d$. Thus \eqref{Chern} and \eqref{ChernTangent} with $N=5$ yield \eqref{Degree}.

We have $Y^2=c_2(\cN_{Y\vert X})$ (e.g. see \cite[Corollary 6.3]{Ful}). Thanks to \eqref{seqNormal} with $Z:=Y$ and $P:=X$, a standard Chern classes computation and \eqref{ChernTangent} with $N=5$ combined with the Noether formula $c_2(\cT_Y)=12\chi(\cO_Y)-K_Y^2$ return
$$
Y^2=(d^2-6d+15)h_Y^2+(6-d)h_YK_Y+2K_Y^2-12\chi(\cO_Y).
$$
On the one hand, $h_YK_Y$ and $K_Y^2$ are multiples of $h_Y^2=\deg(Y)$ thanks to \eqref{Omega}. On the other hand, the cohomologies of \eqref{seqIdeal} tensored by $\cO_X(1-d-2s+1)$ and \eqref{seqStandard} yield 
$$
\chi(\cO_Y)=\chi(\cO_X)-\chi(\cE(1-d-s))+\chi(\cO_X(1-d-2s).
$$
Taking into account \eqref{ChiE} we obtain \eqref{Chi}. Thus, we finally obtain \eqref{Self} by combining \eqref{Chi}, \eqref{Omega}, \eqref{Degree}.
\end{proof}

The last statement of the previous proposition is clear due to
Bertini's theorem, in view of the next remark.

\begin{remark}
\label{rFirstSection}
In general, it is not clear that a section of $H^0(\cE(s))$ vanishes
on a scheme of pure dimension $2$ or, even when it does,
such subscheme $Y$ may fail to be smooth or even reduced.
Nevertheless, $Y$ has pure dimension $2$ if we take
$$
s=s_0:=\min\{\ t\in \bZ\ \vert\ h^0(\cE(t))\ge1\ \}
$$
because $\Pic(X)$ is generated by $\cO_X(1)$. Notice that $s_0\ge0$ by definition. 
Now define
$$
s_1:=\min\{\ t\in \bZ\ \vert\ \cE(t)\ \mbox{is globally generated}\ \}.
$$
Then taking $s=s_1$ and the section $\sigma$ general enough, the vanishing locus $Y$ will be smooth.
We observe that, if $\cE$ has charge $\quantum$, then we know that $\cE(k)$ is regular, hence globally generated. Thus, we have $s_0\le s_1 \le \quantum$. 
\end{remark}

\begin{corollary}
\label{cBound}
If $\cE$ is a rank two instanton bundle with charge $\quantum$ on $X$, then

\begin{equation}
\label{Bound}
0\le \quantum\le \frac{d}{30}\left(5(d^2-4)+3\sqrt{5d^4-25d^2+45}\right).
\end{equation}
Moreover, equality occurs on the right if and only if there is a section of $\cE(\quantum h)$ whose zero-locus is homologous to the complete intersection of $X$ with two further hypersurfaces.
\end{corollary}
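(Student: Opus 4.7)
The plan is to derive the bound from the Hodge--Riemann positivity on $(2,2)$-classes of $X$, combined with the numerical data of Proposition \ref{pSurfaceY}. Since $\cE$ has charge $k$, the twist $\cE(kh)$ is Castelnuovo--Mumford regular, hence globally generated, by \cite[Proposition 5.1]{An--Cs1}. A general section $\sigma\in H^0(\cE(kh))$ therefore vanishes (by Bertini) along a smooth surface $Y\subset X$ of pure codimension $2$, and specialising \eqref{Degree} and \eqref{Self} to $s=k$ writes $\deg Y$ and $Y^2$ as polynomials in $k$ of degree $2$ and $4$, respectively.

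The key observation is that on any smooth fourfold hypersurface $X\subset\bP^5$, the intersection pairing on real $(2,2)$-classes is positive definite. Indeed, Lefschetz hyperplane gives $H^2(X,\bZ)=\bZ h_X$, forcing $P^{1,1}_{\mathrm{prim}}(X)=0$, so that the Lefschetz decomposition of $H^4(X,\bR)$ reduces to $\bR h_X^2\oplus P^4$; Hodge--Riemann then yields positivity on the primitive $(2,2)$-part (the sign factor $(-1)^{k(k-1)/2}i^{p-q}$ with $k=4$, $p=q=2$ equals $+1$), while $(h_X^2)^2=d>0$. The Gram matrix of $h_X^2$ and $[Y]$ therefore has non-negative determinant, i.e.
\[
d\cdot Y^2\ge (\deg Y)^2,
\]
with equality if and only if $[Y]$ is proportional to $h_X^2$ in $H^{2,2}(X,\bR)$.

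Plugging $s=k$ into \eqref{Degree} and \eqref{Self} and computing $dY^2-(\deg Y)^2$, the $k^4$-terms (both equal to $d^2$) and the $k^3$-terms (both equal to $2d(d^2-d+1)$) cancel, leaving the quadratic inequality
\[
k^2-\frac{d(d^2-4)}{3}\,k-\frac{d^2(d^2-1)(4d^2-1)}{180}\le 0.
\]
Its discriminant simplifies via $5(d^2-4)^2+(4d^2-1)(d^2-1)=9(d^4-5d^2+9)$, so the quadratic formula together with the rewriting $\sqrt{d^4-5d^2+9}=\sqrt{5d^4-25d^2+45}/\sqrt{5}$ produces the upper bound in \eqref{Bound}. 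The lower bound $k\ge 0$ is tautological from $k=h^1(\cE(-h))$, a dimension.

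Equality in the bound is equivalent to equality in Hodge--Riemann, which forces $c_2(\cE(kh))=\lambda h_X^2$ in $H^{2,2}(X,\bR)$ with $\lambda=\deg Y/d$; integrality of $c_2(\cE(kh))$ makes $\lambda$ a positive integer, whereupon $Y$ is homologous to the complete intersection of $X$ with a hyperplane and a hypersurface of degree $\lambda$, giving the equality characterisation. The main technical obstacle of the plan is the polynomial bookkeeping behind the $k^4$ and $k^3$ cancellations and the discriminant simplification; conceptually, the whole argument rests on the positivity of the $(2,2)$ intersection form, a feature specific to fourfold hypersurfaces in $\bP^5$.
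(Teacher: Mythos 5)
Your proposal is correct and follows essentially the same route as the paper: both apply the positivity of the intersection form on $H^{2,2}(X,\bR)$ (the paper cites the Hodge index theorem, you derive it from Hodge--Riemann) to the Gram matrix of $h^2$ and $Y=c_2(\cE(\quantum h))$, substitute $s=\quantum$ into \eqref{Degree} and \eqref{Self}, and solve the resulting quadratic inequality in $\quantum$; your cancellations of the $k^4$ and $k^3$ terms, the discriminant identity, and the equality characterisation all check out against the paper's computation $4d^6-5d^4+d^2+60d(d^2-4)\quantum-180\quantum^2\ge 0$.
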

\begin{proof}
The restriction of the intersection product on $H^{2,2}(X,\bZ)$  to the sublattice generated by $h^2$ and $Y$ is positive semidefinite (and actually positive definite if and only if $Y$ is not homologous to a complete intersection), thanks to the Hodge index theorem. Thus we have
$$ \delta = 
\left\vert
\begin{array}{cc}
h^4&h^2Y\\
h^2Y&Y^2
\end{array}\right\vert\ge0.
$$
Plugging $s=k$ into  \eqref{Degree} and \eqref{Self}, using $h^4=d$ we obtain
$$ \delta = 
4d^6 -5d^4 +d^2 +60d(d^2-4)k -180k^2\ge0.
$$
A simple computation yields \eqref{Bound}, because $\quantum=h^1(\cE(-1))\ge0$.
\end{proof}

\begin{remark}
    The above corollary shows that, on any fourfold hypersurface, the charge $k$
of rank-2 instanton bundles is bounded. This should be contrasted with the situation on threefold hypersurfaces,
where it can be arbitrarily large.
For the construction of instanton bundles of arbitrarily high charge, we refer to \cite{faenzi-instanton} in the case of rank 2 and to \cite{faenzi-comaschi} for ranks $r \ge 3$.
\end{remark}

\begin{example}
Let $X\subseteq\p5$ be a smooth quadric hypersurface. In this case \eqref{Bound} becomes $0\le \quantum\le 1$. If $\quantum=0$  we have the twists of the spinor bundles $\cS'$ and $\cS''$. 

Thus the only non-trivial case could be $\quantum=1$. Since the ideal of an isomorphic projection $Y\subseteq\p5$  of a smooth Del Pezzo surface of degree $6$ in $\p6$ is contained in a smooth quadric hypersurface (see \cite[Lemma 3.7]{KapG}), Example \ref{eSerre} shows that such case actually occurs.

Thanks to \cite[Example 7.5]{An--Cs1} we know that each rank two instanton on $X$ with charge $\quantum=1$ is the cohomology of a monad of the form
\begin{equation}
\label{MonadEven}
0\longrightarrow\cO_{X}\longrightarrow\cS'(1)\oplus\cS''(1)\longrightarrow\cO_{X}(1)\longrightarrow0,
\end{equation}
These are exactly the bundles mentioned in \cite{Ott3}.

Since there are no rank two instanton bundles with charge $\quantum\ge2$ on a smooth quadric hypersurface in $\p5$, it follows from Example \ref{eSerre} that each isomorphic projection in $\p5$ of a Del Pezzo surface of degree at least seven is not contained in any smooth quadric hypersurface in $\p5$.
\end{example}

\begin{example}
\label{ePfaffian}
Let $d\ge3$ and consider a skew-symmetric matrix $\varphi$ of order $2d$ with linear entries in $\field[x_0,\dots,x_5]$: they form a vector space of dimension
$$
6{{2d}\choose2}=12d^2-6d.
$$The Pfaffian of $\varphi$ defines a degree $d$ hypersurface $X\subseteq\p5$ whose singular locus is contained inside the locus of points where $\rk(\varphi)\le 2d-4$. If $\varphi$ is general such a locus has codimension $6$ inside $\p5$, hence it is empty, i.e. $X$ is also smooth for a general choice of $\varphi$. 

Thanks to the results in \cite[Section 8]{Bea} this is equivalent to the existence of a rank two Ulrich bundle on $X$. Thus there always exist smooth degree $d$ hypersurfaces in $\p5$ supporting instanton bundles with charge $\quantum=0$.

Notice that the locus of Pfaffian degree $d$ hypersurfaces $X\subseteq\p5$ has dimension $8d^2-6d=12d^2-6d-\dim(\GL_{2d})$ which is smaller than
$$
\dim\vert\cO_{\p5}(d)\vert={{d+5}\choose5}-1
$$
for $d\ge3$: for further details see \cite[Sections 8 and 9]{Bea}.
\end{example}

\begin{lemma} \label{normale-ext}
Let $\cE$ is a rank-2 instanton bundle on a
hypersurface $X \subset \bP^5$ of degree $d$ and let $\sigma\in
H^0(\cE(s))$. Assume that $Y:=(\sigma)_0\subseteq X$
has pure dimension $2$. Then % and $s \le 3-d$. Then
\[
  \dim(\Ext^2_X(\cE,\cE)) \le h^1(\cN_{Y|X}).
\]
\end{lemma}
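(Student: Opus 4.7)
The plan is to bound $\Ext^2_X(\cE,\cE)$ by threading it through two long exact sequences: one arising from the Serre construction for $\cE$, and one from the standard ideal sequence of $Y\subset X$.

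First, the Serre-construction sequence used to prove Proposition~\ref{pSurfaceY} (twisted by $\cO_X(-s)$, and noting $\cE^\vee\cong\cE(1-d)$ since $c_1(\cE)=(d-1)h$) reads
\[
0 \longrightarrow \cO_X(-s) \longrightarrow \cE \longrightarrow \cI_{Y\vert X}(d+s-1) \longrightarrow 0.
\]
Applying $R\Hom(-,\cE)$, the degree-two fragment is
\[
\Ext^2_X(\cI_{Y\vert X}(d+s-1),\cE) \longrightarrow \Ext^2_X(\cE,\cE) \longrightarrow H^2(\cE(s)).
\]
Applying $R\Hom(-,\cE(1-d-s))$ to $0\to\cI_{Y\vert X}\to\cO_X\to\cO_Y\to 0$ and using the adjunction identification $\Ext^2_X(\cI_{Y\vert X}(d+s-1),\cE)\cong\Ext^2_X(\cI_{Y\vert X},\cE(1-d-s))$ gives
\[
H^2(\cE(1-d-s)) \longrightarrow \Ext^2_X(\cI_{Y\vert X},\cE(1-d-s)) \longrightarrow \Ext^3_X(\cO_Y,\cE(1-d-s)).
\]

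The key identification is the normal bundle formula $\cN_{Y\vert X}\cong\cE(s)\vert_Y$ from Theorem~\ref{tSerre} and \eqref{Normal}, whence $\det\cN_{Y\vert X}\cong\cO_Y(d-1+2s)$. Because $Y\subset X$ is codimension-$2$ LCI, Grothendieck--Serre duality gives $\sExt^q_X(\cO_Y,\cO_X)=0$ for $q\neq 2$ and $\sExt^2_X(\cO_Y,\cO_X)\cong\det\cN_{Y\vert X}$. Tensoring by the locally free sheaf $\cE(1-d-s)$ and collapsing the local-to-global spectral sequence for $\Ext$ yields
\[
\Ext^{j+2}_X(\cO_Y,\cE(1-d-s)) \cong H^j(\cN_{Y\vert X}),
\]
and in particular $\Ext^3_X(\cO_Y,\cE(1-d-s))\cong H^1(\cN_{Y\vert X})$. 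Concatenating the two displays above produces the estimate
\[
\dim\Ext^2_X(\cE,\cE) \le h^2(\cE(1-d-s)) + h^1(\cN_{Y\vert X}) + h^2(\cE(s)).
\]

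The final step is to show that the two $h^2$-terms vanish. For this I would combine the instanton vanishings $h^2(\cE(-2h))=h^2(\cE(-3h))=0$ from Definition~\ref{dInstanton}, the Serre-duality symmetry $h^2(\cE(t))=h^2(\cE(-5-t))$ coming from $\omega_X=\cO_X((d-6)h)$ together with $\cE^\vee\cong\cE(1-d)$, the Castelnuovo--Mumford regularity of $\cE(kh)$ (which forces $h^2(\cE(t))=0$ for $t\geq k-2$), and the bound $s\leq k$ from Remark~\ref{rFirstSection}. The hard part will be to make these inputs cover all admissible triples $(d,k,s)$: this is where the argument becomes delicate outside the cubic case, but in the principal case $d=3$ the cohomology table displayed in \S\ref{section:Pfaffians} already records the needed vanishings in the relevant range of twists.
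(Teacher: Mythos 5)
Your overall architecture coincides with the paper's: both arguments thread $\Ext^2_X(\cE,\cE)$ through the Koszul/Serre sequence of $Y$ and the ideal sequence of $Y$ in $X$, use $\cN_{Y\vert X}\cong\cE(s)\vert_Y$ to make $H^1(\cN_{Y\vert X})$ the receiving group, and reduce the lemma to the two vanishings $h^2(\cE(s))=h^2(\cE(1-d-s))=0$. The paper gets there more economically, by tensoring the two short exact sequences with the locally free sheaf $\cE^\vee(-s)$ (resp.\ $\cE(s)$) and taking ordinary cohomology of $0\to\cE(1-d-s)\to\cE^\vee\otimes\cE\to\cE(s)\otimes\cI_{Y\vert X}\to0$ and $0\to\cE(s)\otimes\cI_{Y\vert X}\to\cE(s)\to\cN_{Y\vert X}\to0$; your detour through $\Ext^3_X(\cO_Y,\cE(1-d-s))\cong H^1(\cN_{Y\vert X})$ via the local-to-global spectral sequence and $\sExt^2_X(\cO_Y,\cO_X)\cong\det\cN_{Y\vert X}$ is correct but is heavier machinery for the same additive bound.

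The genuine gap is the final step, which you yourself flag as unresolved: the vanishing of the two $h^2$-terms. The toolkit you list does not cover all admissible cases, even for $d=3$. Definition \ref{dInstanton} only gives $h^2(\cE(-2))=h^2(\cE(-3))=0$; regularity of $\cE(k)$ gives $h^2(\cE(t))=0$ for $t\ge k-2$; Serre duality gives the reflected range $t\le -3-k$. For $k\ge2$ this leaves, for instance, $h^2(\cE(-1))=h^2(\cE(-4))$ undetermined, and that is precisely the value needed when $d=3$, $k=2$, $s=2$ (where $1-d-s=-4$). The cohomology table in \S\ref{section:Pfaffians} does record these as $0$, but it is itself stated without proof and only covers $-5\le t\le 0$, so it cannot be leaned on. The missing ingredient — which presumably also underlies the paper's own one-line assertion that the instanton condition implies the vanishing — is the Steiner--Pfaffian presentation of rank-2 instantons from \cite{An--Cs2}: one has $0\to\cF(-1)\to\cF^\vee\to\cE\to0$ with $\cF$ as in \eqref{steiner}, and the linear resolution of $\cF$ forces $H^2(\cF^\vee(t))=H^3(\cF(t-1))=0$ for all $t$ on $\bP^5$, hence $H^2(\cE(t))=0$ for \emph{every} $t\in\bZ$. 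With that input both your argument and the paper's close for all $(d,k,s)$; without it, your proof is incomplete.
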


\begin{proof}
  Recall that $c_1(\cE)=(d-1)h$, so $\cE^\vee \simeq \cE(1-d)$. Under
  our assumptions, we can use \eqref{seqIdeal}. Tensoring with $\cE^\vee(-sh)$, we get:
  \[
    0 \to \cE(1-d-s) \to \cE^\vee \otimes \cE \to \cE(s) \otimes \cI_{Y|X} \to 0.
  \]
  Since $\cE(s)_{|Y} \simeq \cN_{Y|X}$, we also get the exact sequence:
  \[
    0 \to \cE(s) \otimes \cI_{Y|X}  \to \cE(s) \otimes \cO _Y \to \cN_{Y|X} \to 0.
  \]
  The instanton condition of Definition \ref{dInstanton} implies
  $h^2(\cE(1-d-s)=h^2(\cE(s))=0$. 
  Then:  
  \[
    H^1(\cN_{Y|X}) \twoheadrightarrow H^2(\cE(s) \otimes \cI_{Y|X}) \hookleftarrow \Ext^2_X(\cE,\cE).
  \]
  This implies the desired inequality.
\end{proof}

\begin{remark} If $c_1$ and $c_2$ are as in \eqref{c1}  and in \eqref{Chern}, we will denote by $\fI_X(2;\quantum)\subseteq\fM_X(2;(d-1)h,c_2)$ the locus of rank two instanton bundles with charge $\quantum$. 
In the next sections we inspect the loci $\fI_X(2;\quantum)$ when on hypersurfaces $X$ of degrees $d\ge3$. 
Though $\quantum$ is an integer satisfying \eqref{Bound}, such a  locus might be empty: e.g. if $d=3$, then $\fI_X(2;1)=\emptyset$ if $X$ does not represent a point in $\cC_{18}$. It is then interesting to understand the dimension of $\fI_X(2;\quantum)$ when it is not empty for every $d\ge3$ and $\quantum\ge0$. 
\end{remark}

\subsection{Pfaffians and discriminants}\label{Pfaffiandiscr}

We set the ambient dimension $N$  to
$5$ for this section.
If $N > 5$, a hypersurface admitting a Pfaffian
representation will typically be singular, except for some very special
situations, so the choice $N=5$ represents the highest dimension where smooth Pfaffian
hypersurfaces can be studied in general.
Let $\cF$ be a vector bundle of rank $2r$ on $\bP^5$ with
$c_i(\cF)=e_ih^i$, for some $(e_1,\ldots,e_5) \in \bZ^5.$ Let $\ell \in \bZ$.
Consider a skew-symmetric map
$$\varphi :
\cF(-\ell) \to \cF^\vee.$$
Assume that $X=\bV(\Pf_r(\varphi))$ is a smooth hypersurface of degree $d$.
Recall that, in this case, we say that \textit{$X$ is Pfaffian of type $(\cF^\vee, \ell)$}.

If this happens, the support of $\coker(\varphi)$ is contained in $X$.
Indeed, if $\Pf_r(\varphi)$ is non-zero at $x \in \bP^5$, then $\coker(\varphi)$ vanishes at $x$, so that any homogeneous polynomial lying in the annihilator of $\coker(\varphi)$ is a multiple of $\Pf_r(\varphi)$.
Moreover, at all points $x \in X$ the map $\varphi$ has corank
$2$, for otherwise all Pfaffians of order $r-1$ of $\varphi$ vanish at $x$, so that $X$ would be singular at the point $x$.
Then we have
\[
  \coker(\varphi) \simeq i_*(\cE),
\]
for some vector bundle $\cE$ of rank $2$ on $X$, where $i : X \to
\bP^5$ is the inclusion.
We have:
\[
  c_1(\cE)=(d-\ell)h_X,
\]
where $h_X$ is the restriction to $X$ of the hyperplane class of $\bP^5$.
We want to prove the following statement,
\begin{lemma}
\label{delta}
If $X$ is a smooth Pfaffian hypersurface of type $(\cF^\vee,\ell)$ and
degree $d$ in $\bP^5$
then there is a sublattice of $H^{2,2}(X,\bZ)$ having discriminant
$\delta$, where $\delta$ is a polynomial function
of $\ell, r$ and of the Chern classes of $\cF$.
\end{lemma}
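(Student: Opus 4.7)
The strategy is to compute $\ch(i_*\cE) \in A^\bullet(\bP^5)_{\bQ}$ in two different ways and equate them. From the Pfaffian resolution
\[
0 \to \cF(-\ell) \to \cF^\vee \to i_*\cE \to 0
\]
one reads off
\[
\ch(i_*\cE) = \ch(\cF^\vee) - e^{-\ell h}\,\ch(\cF),
\]
a polynomial in $h$ whose coefficients are manifestly polynomial in $\ell, r$ and $e_1,\ldots,e_5$. Matching the coefficient of $h$ yields $d = r\ell - e_1$, so even $d$ is polynomial in the data.

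The discriminant of the sublattice $\langle h_X^2, c_2(\cE)\rangle \subset H^{2,2}(X,\bZ)$ equals
\[
\delta = d \cdot c_2(\cE)^2 - \bigl(h_X^2 \cdot c_2(\cE)\bigr)^2,
\]
where the products denote intersection numbers in $\bZ$; so the plan is to express each of $h_X^2\cdot c_2(\cE)$ and $c_2(\cE)^2$ polynomially in $\ell, r, e_1,\ldots,e_5$. For this I apply Grothendieck--Riemann--Roch to $i\colon X \hookrightarrow \bP^5$. Since $\cN_{X|\bP^5} \cong \cO_X(dh_X)$,
\[
\ch(i_*\cE) = i_*\!\left(\ch(\cE)\cdot \frac{1-e^{-dh_X}}{dh_X}\right).
\]
Using $c_1(\cE) = (d-\ell)h_X$ together with $c_3(\cE) = c_4(\cE) = 0$ (because $\cE$ has rank $2$), the class inside $i_*$ is an explicit $\bQ[d,\ell]$-linear combination of powers of $h_X$, of products $h_X^j c_2(\cE)$, and of $c_2(\cE)^2$; pushforward is then controlled by the projection formula $i_*(h_X^j) = d\,h^{j+1}$.

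Matching the degree-$3$ components, $c_2(\cE)$ enters only through $\ch_2(\cE) = \tfrac12(c_1(\cE)^2 - 2c_2(\cE))$, and the resulting scalar equation in $\bQ h^3$ isolates $h_X^2\cdot c_2(\cE)$ as a polynomial in $\ell, r, e_1, e_2$. Matching the degree-$5$ components, the unique source of $c_2(\cE)^2$ is $\ch_4(\cE) = \tfrac1{24}(c_1^4 - 4c_1^2 c_2 + 2c_2^2)$, while every other $c_2$-containing contribution is of the form $h_X^j c_2(\cE)$ with $j\ge 2$ (since $c_1(\cE)$ is a multiple of $h_X$) and therefore, by the projection formula, reduces to a rational multiple of $h_X^2\cdot c_2(\cE)$, already computed. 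Solving then delivers $c_2(\cE)^2$ as a polynomial in $\ell, r, e_1,\ldots,e_5$, and substituting into the formula for $\delta$ gives the claim.

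The only real obstacle is bookkeeping: one must carefully expand $(1-e^{-dh_X})/(dh_X)$ through degree $4$, collect the $c_2(\cE)^2$, $h_X^2 c_2(\cE)$, and pure-$h_X^j$ contributions at degrees $3$ and $5$, and observe that the rational outputs land in $\bZ$---automatic from $H^8(X,\bZ)=\bZ$ and integrality of intersection numbers. Conceptually the argument is plain linear algebra over $\bQ[\ell, r, e_1,\ldots, e_5]$.
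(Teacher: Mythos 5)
Your proposal is correct and follows essentially the same route as the paper: apply Grothendieck--Riemann--Roch to the inclusion $i\colon X\hookrightarrow\bP^5$, compute $\ch(i_*\cE)=\ch(\cF^\vee)-e^{-\ell h}\ch(\cF)$ from the Pfaffian resolution, extract $h_X^2\cdot c_2(\cE)$ and $c_2(\cE)^2$ by matching graded pieces (using $c_1(\cE)=(d-\ell)h_X$ and $c_3=c_4=0$), and form the $2\times 2$ Gram determinant $\delta=d\,c_2(\cE)^2-(h_X^2\cdot c_2(\cE))^2$. The paper simply records the resulting explicit polynomial coefficients without spelling out the derivation; your observation that $d=r\ell-e_1$ is itself polynomial in the data is a small but worthwhile addition.
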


\begin{proof}
To begin with, we set
\[
  u_0 = \frac 16 r(r-1)(2r-1)\ell^3, \qquad
  u_1 = -r(r-1)\ell^2, \qquad 
  u_2 = (r-1)\ell.
\]
Applying Grothendieck-Riemann-Roch to the map $i$, we compute:
\[
  c_2(\cE) \cdot h_X^2 = \sum_{0 \le i \le 2 } u_id^i + (d+\ell -\ell r) e_2 +e_3.
\]
The computation of $c_2(\cE)^2$ can also be carried out using Grothendieck-Riemann-Roch, in the
following way.
We first define some polynomial functions of $r$ and $\ell$.
\begin{align*}
  v_0 &= -\frac 1{30}r(r-1)(2r-1)(3r^2-3r-1)\ell^5,   &v_1 &= \frac 16 r(r-1)(6r^2-8r+1)\ell^4,\\
  v_2 &= -\frac 16 r(r-1)(10r-11)\ell^3,  &v_3 &= (r-1)^2\ell^2.
\end{align*}
We also define the following coefficients.
\begin{align*}
 w_3  = &((2r-3)d-(r-1)^2\ell)\ell, & w_2 & = (2d^2(r-1)-d(r-1)(3r-2)\ell+r(r-1)^2\ell^2)\ell,\\
   w_4  = &\ell r-d-2\ell, & w_5 &= -1, \\
  w_{2,2}  =& - \ell r + d + \ell,&   w_{2,3} & = 1 .
\end{align*}

For any multiset $I = \{i_1,\ldots,i_k\}$
whose support $|I|$ is in $\llbracket 2,5 \rrbracket = \{2,\ldots,5\}$
we set $e_I=e_{i_1}\cdots e_{i_k}$. We consider the coefficient $w_I$
for any such multiset, where any
coefficient $w_I$ not already defined in the previous display is tacitly set to 0.
The result of applying Grothendieck-Riemann-Roch to the map $i$ is:
\[
  c_2(\cE)^2 = \sum_{0 \le j \le 3}v_id^j +   \sum_{|I| \subset \llbracket 2,5 \rrbracket }w_Ie_I.
\]
We define now the following coefficients.
\begin{align*}
  a_0&=\frac {-1}{36}r^2(r-1)^2(2r-1)^2\ell^6, &
a_1&=\frac 1{30} r(r-1)(2r-1)(7r^2-7r+1)\ell^5, \\
  a_2&=\frac {-1}6 r(r-1)(2r-1)^2\ell^4, &
a_3&=\frac 1 6 r(r-1)(2r-1)\ell^3. \\
  b_2 &= \frac 13 r(r-1)(\ell r-d-\ell)(2\ell r-3d-\ell)\ell^2, &
b_3 & = \frac {-1}3 (r(r-1)(2r-1)\ell^2+3d(d+\ell-\ell r^2)\ell), \\
  b_4 &= d(\ell r-d-2\ell),
& b_5 & = -d.
\end{align*}
Finally, we put:
\begin{equation*}
  b_{2,2} =  -(r-1)(\ell r-d-\ell)\ell, \qquad 
  b_{2,3} = 2\ell r-d-2\ell, \qquad 
  b_{3,3}=-1.
\end{equation*}
Again we consider multisets $I$ with $|I| \subset \llbracket 2,5
\rrbracket$ and put $b_I=0$ if $b_I$ is not defined in the previous
displays. In conclusion, we obtain the following formula for discriminant of the sublattice
generated by $c_2(\cE) \cdot h_X^2$ and $c_2(\cE)^2$.
\begin{equation}
  \label{delta-formula}
  \delta = \sum_{0 \le j \le 3} a_jd^j +
  \sum_{|I| \subset \llbracket 2,5 \rrbracket }b_Ie_I.
\end{equation}
\end{proof}

For instance, if $X$ is a linear Pfaffian of degree $d$, so that $\cF$ is the
trivial bundle of rank $d$ (hence $e_i=0$ for all $i\ge 0$) and $\ell=1$, we get
\[
  \delta = \frac {1}{180} d^2(d^2-1)(4d^2-1).
\]

\section{Instantons and Ulrich complexity of special cubic fourfolds}

\label{section:ulrich}

In this section we look at the interplay between instantons (notably
rank-2 instantons) and Ulrich bundles of low rank on special cubic
fourfolds.
Indeed, on one hand, it was proved in \cite{faenzi-kim} that any
smooth cubic fourfold $X$ carries an Ulrich bundle of rank $6$.
Also, the smallest rank of an Ulrich bundle on $X$, also known as the Ulrich
complexity $\uc(X)$, equals $6$ if $X$ is very general.
On the other hand, $\uc(X)$ may be smaller than $6$ if $X$ is special.
To recall the terminology, we let $\mathfrak X_3$ be the open subset
of $\vert\cO_{\p5}(3)\vert$ parameterizing
smooth hypersurfaces. The locus $\mathfrak X_3$ is contained in the set of stable points with respect to the natural action of the group $\PGL_6$ on $\vert\cO_{\p5}(3)\vert$ (see \cite[Proposition 4.2]{M--F--K}. The geometric quotient  
$\cC:=\mathfrak X_3/\PGL_6$
is the moduli space of smooth cubic hypersurfaces in $\p5$.
There is a natural map $\mathfrak c\colon \mathfrak X_3\rightarrow \cC$. 
A hypersurface $X$ is called {\sl special} if it contains a smooth
surface which is not homologous to a complete intersection inside $X$.
As proved in \cite[Theorem 1.0.1]{Has}, the set of special smooth
cubic hypersurfaces in $\cC$ is a union of
a countable family of irreducible divisors $\cC_\delta$
where $\in\bZ$, $\delta\equiv0,2\pmod 6$, $\delta\ge8$.
Such loci have been studied, at least for low values of $\delta$, for instance we have

\begin{itemize}\item[$\cC_8$.] Each point  represents a hypersurface containing a plane: see \cite[Example 4.1.1]{Has}.
\item[$\cC_{12}$.] The general point represents a hypersurface containing a cubic scroll: see \cite[Example 4.1.2]{Has}.
\item[$\cC_{14}$.] The general point represents a hypersurface containing a rational normal quartic scroll (or, as alternative description, a quintic Del Pezzo surface in $\p5$): see \cite[Example 4.1.3]{Has}.
\item[$\cC_{18}$.] The general point represents a hypersurface containing an elliptic sextic scroll (or, as alternative description, the isomorphic projection in $\p5$  of a smooth Del Pezzo surface of degree $6$ in $\p6$): see \cite[Section 1]{A--H--T--VA}.
\item[$\cC_{20}$.] The general point represents a hypersurface containing a Veronese surface $\p5$: see \cite[Example 4.1.4]{Has}.
\end{itemize}

\noindent
In what follows, we will set $\hat{\cC}_{\delta}:=\mathfrak c^{-1}(\cC_\delta)\subseteq \mathfrak X_3$.

The connection between Hassett divisors and Ulrich complexity is
apparent in some cases. For example, $X$ is Pfaffian if and only if
$\uc(X)=2$ and this happens if $X$ is general in $\hat{\cC}_{14}$. We refer to \cite{bolognesi-russo-stagliano} for a detailed analysis of this.

This connection has been
further explored more recently in \cite{kim}.
It is shown there that for only finitely many values of $\delta$
a generic cubic $X$ in $\hat{\cC}_\delta$ may satisfy $\uc(X)=4$ and that
$\uc(X)=4$ for $X$ generic in $\hat{\cC}_8$. Also, if $\uc(X)=3$, then $X$
lies in  $\hat{\cC}_{18}.$
We will review this fact below, which is however somehow clear from
\cite{kim}, while the existence of a family of stable Ulrich bundles of rank
$3$ parametrized by a symplectic surface
supported on a general fourfold of $\hat{\cC}_{18}$  was proved in \cite{truong-yen}.

Here, we attempt to clarify the situation a bit. Indeed, we will
start by listing all
possible values of $\delta$ such that a generic fourfold $X$ in
$\hat{\cC}_\delta$ may have $\uc(X) < 6$.
Then, we will see how Ulrich bundles can be obtained from instantons
by deforming their acyclic extensions.
This will provide a way to construct Ulrich bundles of minimal rank on
a cubic lying in $\hat{\cC}_\delta$, for  $\delta=18$ and $\delta=20$,
which we will do in the next section, specifically focused on rank-2
instantons.

Let us state the first main result of this section. Let $X$ be a smooth cubic fourfold and let us denote by 
$\fU_X(r)$ the moduli space of simple Ulrich bundles of rank $r$
on $X$. We write $\fU_X(r,a)$ for the subspace of $\fU_X(r)$ consisting of  bundles
$\cU$ with $c_2(\cU)^2=a$.

\begin{theorem} \label{theorem ulrich}
  When not empty, the space $\fU_X(r,a)$ is
  smooth and symplectic in dimension $m =
  2+(r^2(3r^2-2r+3))/4-a$.
  If $\cE$ is an unobstructed instanton of rank $s$ and charge $k$ on $X$
  then $\fU_X(r)$ contains a deformation of $\cE \oplus
  \cO_X(1)^{\oplus k}$ and $m=2+r^2-c_2(\cE(-1))^2$, with $r=s+k$.
  If $r<6$, or $3 \nmid r$, $X$ lies in a Hassett divisor $\cC_\delta$, where for $r \le 5$ the possible
  values of $(r,\delta,m)$ are classified in Table
  \ref{table-of-possible}.
\end{theorem}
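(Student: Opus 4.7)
The plan is to prove the theorem in three parts: the smoothness and symplectic structure of $\fU_X(r,a)$ together with the dimension formula, the existence of Ulrich bundles as deformations of acyclic extensions of instantons, and the Hassett-divisor enumeration.

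For the first part, I would first show that $\cU(-h)\in\Ku(X)$ for any Ulrich bundle $\cU$ of rank $r$: the Ulrich vanishing conditions imply the full acyclicity $H^i(\cU(jh))=0$ for $-4\le j\le -1$ and all $i$, which gives $\Hom^\bullet_X(\cO_X(ih),\cU(-h))=0$ for $i=0,1,2$. Since $\Ku(X)$ is a $2$-Calabi-Yau category, Mukai's argument applies: the moduli of simple objects in $\Ku(X)$ is smooth and symplectic of expected dimension $2-\chi(\cU,\cU)$. I would then compute $\chi(\cU,\cU)$ by Hirzebruch-Riemann-Roch: the Ulrich Hilbert polynomial $3r\binom{t+4}{4}$, compared with its Riemann-Roch expansion, forces $c_1(\cU)=rh$, $h^2\cdot c_2(\cU)=r(3r-1)/2$, $h\cdot c_3(\cU)=r(r-2)(r+1)/2$ and expresses $c_4(\cU)$ in terms of the single free parameter $a=c_2(\cU)^2$. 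Substituting into $\chi(\cU,\cU)=\int_X\ch(\cU^\vee\otimes\cU)\cdot\mathrm{td}(X)$ then yields $\chi(\cU,\cU)=a-r^2(3r^2-2r+3)/4$, whence the dimension formula.

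For the second part, given an unobstructed instanton $\cE$ of rank $s$ and charge $k$, let $\cG$ be the tautological acyclic extension $0\to\cE\to\cG\to\cO_X(1)^{\oplus k}\to 0$ corresponding, under $\Ext^1(\cO_X(1)^{\oplus k},\cE)\cong\mathrm{End}(H^1(\cE(-h)))$, to the identity. A Chern-class computation using \eqref{c1}-\eqref{Chern} shows that $\ch(\cG)$ matches that of an Ulrich bundle of rank $r=s+k$, with $c_2(\cG(-h))=c_2(\cE(-h))$. Theorem \ref{unobstructed-intro} extends to give smoothness of the moduli of simple sheaves at (the class represented by) $\cG$, and since the Ulrich vanishings are open on flat families with fixed Hilbert polynomial, a general deformation is a simple Ulrich bundle and hence lies in $\fU_X(r)$. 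The rewriting $m=2+r^2-c_2(\cE(-h))^2$ follows from the substitution $c_2(\cU)^2=c_2(\cU(-h))^2+r^2(r-1)+3r^2(r-1)^2/4$, itself derived from $c_2(\cU)=c_2(\cU(-h))+r(r-1)h^2/2$ and $h^2\cdot c_2(\cU(-h))=r$.

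For the Hassett-divisor part, Proposition \ref{list of deltas} identifies $\delta=3a-r^2(3r-1)^2/4$ as the discriminant of the sublattice $\langle h_X^2,c_2(\cU)\rangle$ of $H^{2,2}(X,\bZ)$. When $3\nmid r$, the intersection $h_X^2\cdot c_2(\cU)=r(3r-1)/2$ is not divisible by $h_X^4=3$, so $c_2(\cU)\notin\bZ h_X^2$ and $X$ must lie in a Noether-Lefschetz divisor $\cC_\delta$. For $r=3$, the only case with $r<6$ and $3\mid r$, I would invoke the classification of \cite{kim} showing that $\uc(X)=3$ forces $X\in\cC_{18}$. Table \ref{table-of-possible} then arises by enumerating $(r,\delta,m)$ with $r\in\{2,3,4,5\}$ subject to $m\ge 0$, $\delta\ge 0$, and the Hassett congruences $\delta\equiv 0,2\pmod 6$ with $\delta\ge 8$. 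The main obstacle in the plan is the second step: one must rigorously propagate smoothness from $[\cE]$ (guaranteed by Theorem \ref{unobstructed-intro}) to a neighborhood of the acyclic extension $\cG$, and then ensure that generic deformations satisfy the Ulrich vanishings; this combines the openness of these conditions with the $2$-Calabi-Yau deformation theory of $\Ku(X)$.
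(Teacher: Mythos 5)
Your treatment of the smoothness, symplectic structure and dimension of $\fU_X(r,a)$ (via $\cU(-1)\in\Ku(X)$ and the $2$-Calabi--Yau deformation theory) and your enumeration of $(r,\delta,m)$ follow the same route as the paper's Proposition \ref{list of deltas} and are fine. The genuine gap is in your second step, and it sits exactly at the point you flag as ``the main obstacle'': you cannot obtain the Ulrich vanishings for a generic deformation $\cG'$ of the acyclic extension $\cG$ by openness or semicontinuity, because those vanishings \emph{fail at the central fibre}. From the defining sequence $0\to\cE(-4)\to\cG(-4)\to\cO_X(-3)^{\oplus k}\to 0$ one computes $h^3(\cG(-4))=h^4(\cG(-4))=k>0$, equivalently $\Hom_X(\cG(-1),\cO_X)\ne 0$. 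Semicontinuity only gives $h^i(\cG'(-4))=0$ for $i\le 2$, and since $\chi(\cG'(-4))=0$ the remaining vanishing $h^3(\cG'(-4))=0$ is equivalent to $h^4(\cG'(-4))=0$, i.e.\ to $\Hom_X(\cG'(-1),\cO_X)=0$ --- and nothing in a semicontinuity argument forces this group to vanish nearby. This is precisely why the paper's Proposition \ref{deforms to Ulrich} argues by contradiction: if $\dim\Hom_X(\cG'(-1),\cO_X)=\ell>0$ generically, then the coevaluation $\cG'(-1)\to\cO_X^{\oplus\ell}$ is surjective with kernel $\cK'(-1)$ for an instanton $\cK'$ of charge $\ell$, the assignment $\cG'\mapsto\cK'$ is generically injective (one recovers $\cG'$ as the acyclic extension of $\cK'$), and the dimension count $m_\cG\le m_{\cK'}\le\dim\Ext^1_X(\cK,\cK)\le\dim\Ext^1_X(\cE,\cE)+k^2-\ell^2$ forces $\ell^2\le 0$, a contradiction.

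Relatedly, you misplace the role of the unobstructedness hypothesis. Smoothness of the moduli space of simple sheaves at $[\cG]$ needs neither Theorem \ref{unobstructed-intro} nor any propagation from $[\cE]$: it follows directly from $\cG(-1)\in\Ku(X)$ together with simplicity of $\cG$, exactly as for Ulrich bundles. Where unobstructedness ($\dim\Ext^2_X(\cE,\cE)=1$) is genuinely needed is in the dimension count above, to get $\dim\Ext^1_X(\cE,\cE)=2+\frac{r^2(3r^2-2r+3)}{4}-Y^2-k^2$ from Riemann--Roch; without it the upper bound on $\dim\Ext^1_X(\cK,\cK)$ is too weak to yield the contradiction. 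The rest of your outline (the Chern character identification of $\cG$ with an Ulrich bundle, the rewriting $m=2+r^2-c_2(\cE(-1))^2$, and the table) is consistent with the paper.
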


Note that, in Table \ref{table-of-possible}, the dimension of families
of Ulrich bundles arising from deformations of $\cE \oplus
\cO_X(1)^{\oplus k}$ as in the above theorem are distinguished by a
bold letter.
The proof of the above theorem will result from Proposition \ref{list
  of deltas} and Proposition \ref{deforms to Ulrich}, together with
the straightforward computation of $m-2=(r^2(3r^2-2r+3))/4-a$ as
$r^2-c_2(\cE(-1))^2$ where $a=c_2(\cE \oplus \cO_X(1)^{\oplus k})^2$.

To explain the meaning of \textit{unobstructed}, let us mention the fact that, given a (simple) instanton sheaf $\cE$ on a cubic fourfold $X$, we have $\dim \Ext^2_X(\cE,\cE) \ge 1$, we will see this in Lemma \ref{at-least-1}, cf.  in particular \eqref{one-d}. 
When equality is attained, the deformation theory of $\cE$ is particularly nice. For this reason, we give the following definition.

\begin{definition}
      Let $\cE$ be an instanton sheaf on a smooth cubic fourfold $X$. We say that $\cE$ is \textit{unobstructed} if $\Ext_X^2(\cE,\cE)$ is 
  one-dimensional.
\end{definition}

This implies that the relevant moduli space of sheaves is smooth at $\cE$. Indeed, we will prove the following result, which is the second main point of this section.
\begin{theorem}
     \label{unobstructed}
    Let $\cE$ be a simple unobstructed instanton sheaf. Then the moduli space of simple instanton sheaves on $X$ is smooth at the point $[\cE]$.
\end{theorem}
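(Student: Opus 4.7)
The plan is to reduce the smoothness of the moduli of simple sheaves at $[\cE]$ to a Mukai-style argument for the acyclic extension $\cG$, invoking the 2-Calabi-Yau structure of the Kuznetsov component $\Ku(X)$. First I would recall the standard deformation-theoretic criterion: smoothness at $[\cE]$ is equivalent to the vanishing of the Yoneda-square obstruction map $\mathrm{sq}\colon\Ext^1_X(\cE,\cE)\to\Ext^2_X(\cE,\cE)$, $\xi\mapsto\xi\cup\xi$. Since $\dim\Ext^2_X(\cE,\cE)=1$ by the unobstructedness hypothesis, this amounts to showing that $\mathrm{sq}$ has zero image.

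The key geometric input is that $\cG(-1)\in\Ku(X)$. In this K3 category Serre duality gives $\Ext^2_X(\cG,\cG)\cong\Hom_X(\cG,\cG)^\vee$, and, more importantly, the Yoneda pairing $\Ext^1_X(\cG,\cG)\otimes\Ext^1_X(\cG,\cG)\to\Ext^2_X(\cG,\cG)\cong\field$ coincides with the Serre duality pairing, which is a perfect alternating (symplectic) form. Working in $\mathrm{char}(\field)\neq 2$, alternation forces $\tilde\xi\cup\tilde\xi=0$ for every $\tilde\xi\in\Ext^1_X(\cG,\cG)$. This is the direct analogue of Mukai's argument for smoothness of moduli of simple sheaves on a K3 surface, and shows that the obstruction map for $\cG$ vanishes identically.

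To transfer this vanishing to $\cE$, I would use the surjection $\Phi\colon\Ext^2_X(\cE,\cE)\twoheadrightarrow\Ext^2_X(\cG,\cG)$ sketched in the paragraphs preceding the theorem; under unobstructedness both sides have dimension $1$, so $\Phi$ is an isomorphism. A first-order deformation class $\xi\in\Ext^1_X(\cE,\cE)$ lifts, through the long exact sequence of the acyclic extension $0\to\cO_X(1)^{\oplus k}\to\cG\to\cE\to 0$ and the instanton vanishings listed in the cohomology table at the start of \S \ref{section:Pfaffians}, to a class $\tilde\xi\in\Ext^1_X(\cG,\cG)$ satisfying $\Phi(\xi\cup\xi)=\tilde\xi\cup\tilde\xi$ by functoriality of the Yoneda square. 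Combining the three steps gives $\xi\cup\xi=0$, hence smoothness of the moduli of simple sheaves at $[\cE]$.

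The hardest part will be the last compatibility, namely a precise naturality statement linking the Yoneda squares on $\cE$ and on $\cG$ through the connecting homomorphism of the acyclic extension. The Atiyah-class formalism delivers this functoriality in principle, but the bookkeeping requires the $\Ext^j_X(\cE,\cO_X(1))$-vanishings supplied by the instanton cohomology table to kill the spurious contributions that would otherwise spoil the diagram chase. A possible shortcut is to reformulate the deformation theory of $\cE$ directly through $\cG(-1)\in\Ku(X)$, exhibiting a local isomorphism of deformation functors near $[\cE]$ and pulling back the smoothness of the moduli of simple objects in the 2-Calabi-Yau category $\Ku(X)$, which in turn is guaranteed by the symplectic Yoneda argument above.
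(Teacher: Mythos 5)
There is a genuine gap at the very first step. You reduce smoothness at $[\cE]$ to the vanishing of the quadratic map $\xi\mapsto\xi\cup\xi$ on $\Ext^1_X(\cE,\cE)$, but this only controls the \emph{primary} obstruction (lifting first-order deformations to second order). Smoothness of the moduli space requires the vanishing of \emph{all} obstructions, and the higher-order ones, while they also live in $\Ext^2_X(\cE,\cE)$, are Massey-product--type classes that are not cup squares; so ``$\xi\cup\xi=0$ for all $\xi$'' is necessary but not sufficient. Relatedly, the ``Mukai argument'' you invoke for the K3 category is misremembered: Mukai's smoothness proof is not the antisymmetry of the Yoneda pairing on $\Ext^1$ (which again only kills the primary obstruction), but a trace/semiregularity argument showing that an injective linear map out of $\Ext^2$ annihilates every obstruction class. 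The same issue infects your proposed shortcut: smoothness of moduli of simple objects in $\Ku(X)$ is itself established in the literature by a semiregularity argument, not by the symplectic pairing alone.

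The paper's proof runs along the lines of your ``shortcut,'' but with the correct mechanism. Following Buchweitz--Flenner and Kuznetsov--Markushevich, it considers the semiregularity map $\sigma_1^{\cE}(\xi)=\tr\bigl(\Upsilon(\xi\otimes\At(\cE))\bigr)\in H^3(X,\Omega_X)\cong\bk$, whose injectivity on $\Ext^2_X(\cE,\cE)$ guarantees that all obstructions vanish. Injectivity of $\sigma_1^{\cG}$ for the acyclic extension $\cG$ is known from the K3-category theory (\cite[Theorem 31.1]{manypeople}), and is transported to $\cE$ exactly by the kind of bookkeeping you anticipate: the isomorphism $\Ext^2_X(\cE,\cE)\simeq\Ext^2_X(\cG,\cG)$ coming from unobstructedness, an injection $\Ext^1_X(\cG,\cG\otimes\Omega_X)\hookrightarrow\Ext^1_X(\cE,\cE\otimes\Omega_X)$ matching $\At(\cG)$ with $\At(\cE)$ (using the instanton vanishings you list), and functoriality of the Yoneda product with respect to $\zeta:\cE\to\cG$. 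So your transfer machinery is sound and close to the paper's; to repair the argument you should apply it to the semiregularity map rather than to the cup square, and cite the Buchweitz--Flenner criterion in place of the claimed equivalence with the vanishing of the primary obstruction.
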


Theorem \ref{theorem ulrich} will be proved in the next two sections, while Theorem \ref{unobstructed} will be shown in \S \ref{section:unobstructed}.

\subsection{Ulrich bundles and the Kuznetsov component}

An Ulrich bundle $\cU$ on a cubic fourfold $X$ is characterized by the
cohomological vanishing:
\begin{equation*}
  H^*(\cU(-t))=0, \qquad \mbox{$t \in 1,\ldots,4$.}  
\end{equation*}
Incidentally we note that, if $\cU$ has rank $r$, we have
\[
  \chi(\cU(t))=\frac {r}{8} \prod_{1\le j \le 4} (t+j).
\]

We will use the fact that Ulrich bundles lie in the Kuznetsov category
of $X$.
Recall that the derived category $\Db(X)$ of bounded complexes of coherent sheaves
on a smooth cubic fourfold $X$ has the semiorthogonal decomposition
\[
  \Db(X) = \langle \Ku(X),\cO_X,\cO_X(1),\cO_X(2) \rangle
\]
where the full triangulated subcategory $\Ku(X)$, called the
\textit{Kuznetsov component} of $X$, consists of the objects $\cE$ of
$\Db(X)$ such that $H^*(\cE)=H^*(\cE(-1))=H^*(\cE(-2))=0$, see \cite{kuznetsov:V14}.
The Kuznetsov component is a K3 category, namely, for any pair of
objects $\cE$ and $\cF$ of $\Ku(X)$, we have a natural isomorphism
\[
  \Hom(\cE,\cF) \simeq \Hom(\cF,\cE[2])^\vee. 
\]
Note along the way that a coherent sheaf $\cU$ on $X$ is an Ulrich bundle if and
only if $\cU(-1)$ and $\cU(-2)$ both lie in $\Ku(X)$.

\begin{proposition} \label{list of deltas}
  Let $X$ be a smooth cubic fourfold and $\cU$ be a simple Ulrich
  bundle of rank $r > 0 $ on $X$.
  Then $\cU$ belongs to a smooth symplectic family of
  dimension
  \[
    m = 2+\frac{r^2(3r^2-2r+3)}{4}-c_2(\cU)^2.      
  \]
  Also, if $r$ is prime to $3$, then $X$ lies in $\hat{\cC}_\delta$, with
  \begin{equation}
    \label{eq:delta}
    \delta = -\frac{r^2(3 r-1)^2}{4} + 3c_2(\cU)^2.
  \end{equation}
  Next, assume that $\uc(X) < 6$.
  Then $X$ belongs to $\hat{\cC}_\delta$ with
  \[
    \delta \in \{8, 14, 20, 26, 32, 38, 44, 50, 56\}.
  \]
  More precisely, if $r \in \{2,3,4,5\}$,
  the possible values of $(r,\delta,m)$ are as in the following table,
  expressing $m$ as a function of $r,\delta$:
\end{proposition}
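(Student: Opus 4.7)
The approach is to exploit the fact that every Ulrich bundle $\cU$ on a smooth cubic fourfold sits, after twisting, inside the K3 category $\Ku(X)$, and to combine K3-type deformation theory with Hirzebruch--Riemann--Roch to extract both the dimension formula and the Hassett discriminant.

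First, note that the Ulrich condition is equivalent to $\cU(-1), \cU(-2) \in \Ku(X)$, hence $\Ext^i_X(\cU,\cU) = \Ext^i_{\Ku(X)}(\cU(-1),\cU(-1))$ by twist invariance. Since $\Ku(X)$ is a K3 category, Serre duality there gives $\Ext^2(\cU,\cU) \simeq \Hom(\cU,\cU)^\vee$, so $\dim \Ext^2(\cU,\cU) = 1$ by simplicity. Standard deformation theory of simple objects in a $2$-Calabi--Yau category (Mukai, Inaba) then makes $\fU_X(r,a)$ smooth and holomorphic-symplectic at $[\cU]$, with dimension $\dim \Ext^1(\cU,\cU) = 2 - \chi(\cU,\cU)$.

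Next, I would compute $\chi(\cU, \cU)$ and $c_2(\cU) \cdot h^2$ via HRR. The cohomological Ulrich condition forces $\chi(\cU(t)) = (r/8)(t+1)(t+2)(t+3)(t+4)$; comparing with $\int_X \ch(\cU)\, e^{th}\, \mathrm{td}(X)$ and using the Todd class of a cubic fourfold (computed from \eqref{ChernTangent} and the Euler sequence) determines $c_1(\cU) \cdot h^3$, $c_2(\cU) \cdot h^2$, $c_3(\cU) \cdot h$ and $c_4(\cU)$ as explicit polynomials in $r$. In particular $c_2(\cU) \cdot h^2 = r(3r-1)/2$, so the Gram determinant of $L := \langle h^2, c_2(\cU)\rangle$ equals
\[
3\, c_2(\cU)^2 - \frac{r^2(3r-1)^2}{4}.
\]
A parallel evaluation of $\chi(\cU,\cU) = \int_X \ch(\cU^\vee)\ch(\cU)\,\mathrm{td}(X)$, noting that only the even-degree parts of $\ch(\cU^\vee)\ch(\cU)$ survive, should simplify to $\chi(\cU,\cU) = c_2(\cU)^2 - r^2(3r^2-2r+3)/4$, which yields the stated formula for $m$.

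Finally, to deduce the Hassett divisor $\cC_\delta$ the sublattice $L$ must be saturated in $H^{2,2}(X,\bZ)$. When $3 \nmid r$ the intersection $c_2(\cU) \cdot h^2 = r(3r-1)/2$ is nonzero modulo $3$; intersecting any putative overlattice vector with $h^2$ forces the saturation index to be coprime to $3$, and combining both intersection pairings (or using the divisibility $[L^{\mathrm{sat}}:L]^2 \mid \mathrm{disc}(L)$ with the bounds from $m \ge 0$ and $\delta \ge 8$) excludes any non-trivial index. The enumeration for $\uc(X) < 6$ is then a routine case analysis: for each $r \in \{2,3,4,5\}$ the constraints $m \ge 0$, $\delta \ge 8$ and $\delta \equiv 0, 2 \pmod 6$ leave finitely many admissible $c_2(\cU)^2$, producing the table. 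I expect the primitivity step to be the main technical obstacle, since identifying $\mathrm{disc}(L)$ with the Hassett invariant $\delta$ is essential; the exceptional case $r = 3$ requires a separate saturation analysis, though one that ultimately confirms $\delta = 18$ as the only value compatible with $m \ge 0$.
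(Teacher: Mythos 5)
Your proposal follows essentially the same route as the paper: smoothness and the symplectic structure come from the K3 structure of $\Ku(X)$ (the paper cites \cite{faenzi-kim} and \cite{manypeople} for exactly the Mukai-type unobstructedness you invoke), the dimension is $2-\chi(\cU,\cU)$, the Chern data $c_2(\cU)\cdot h^2=r(3r-1)/2$ and $\chi(\cU,\cU)$ are extracted from the Ulrich Hilbert polynomial via Riemann--Roch (the paper cites \cite{kim} for these, but the content is identical), and the table is produced by a numerical sieve; your parity condition $\delta\equiv 0\pmod 2$ is equivalent to the paper's condition $\int_X c_4(\cU)\in\bZ$.

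Two caveats. First, the enumeration step does not work as you describe for $r\in\{2,3\}$: the constraints $m\ge 0$, $\delta\ge 8$, $\delta\equiv 0,2\pmod 6$ also admit $(r,\delta,m)=(2,8,2)$, $(3,12,4)$ and $(3,24,0)$, so your assertion that for $r=3$ the bound $m\ge 0$ leaves $\delta=18$ as the only value is simply false (it leaves $\delta\in\{12,18,24\}$). The single entries in those two rows require extra geometric input: for $r=2$ the zero locus of a section of $\cU$ is a quintic surface whose invariants are all forced by the Koszul resolution and adjunction, so Proposition \ref{pSurfaceY} pins down $c_2(\cU)^2=13$ and hence $\delta=14$; for $r=3$ the paper leans on the result of \cite{kim} that $\uc(X)=3$ forces $X\in\hat{\cC}_{18}$. (The sieve does reproduce the rows $r=4,5$ exactly; and to be fair, the paper's own write-up of this step is loose in the same way.) Second, the saturation of $\langle h^2, c_2(\cU)\rangle$ in $H^{2,2}(X,\bZ)$, which you rightly flag as needed to identify the Gram determinant with the Hassett label, is not actually established by your divisibility sketch -- the congruence on $c_2(\cU)\cdot h^2$ does not by itself exclude a nontrivial index -- but the paper does not address this point either, so it is not a defect of your argument relative to the paper's.
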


  \begin{table}[H]
    \centering
    \renewcommand{\arraystretch}{1.1}
    \begin{tabular}{c||c|c|c|c|c|c|c|c|c|c}
      \toprule
      {$r \backslash \delta$} & 8 & 14 & 18 & 20 & 26 & 32 & 38 & 44 & 50 & 56 \\
      \midrule
      \midrule
      2 &    & \textbf{0} &    &    &    &    &    &    &    &    \\
      \midrule
      3 &    &            & \textbf{2} &    &    &    &    &    &    &    \\
      \midrule
      4 & 10 & 8          &            & \textbf{6} & 4  & 2  & 0  &    &    &    \\
      \midrule
      5 & 16  & 14         &            & \textbf{12}& 10 & 8  & 6  & 4  & 2  & 0  \\
      \bottomrule
    \end{tabular}
    \caption{\textit{Values of $m=\dim(\fU_X(r))$ for $r<6$}.}
    \label{table-of-possible}
  \end{table}
  
  \begin{remark}
    We do not know if all the values of the above table are actually
    attained. Let us summarize here our contribution and some of our
    expectations.
    \begin{itemize}
      \item The case $\delta=8$ is treated in \cite[Theorem
        11]{kim}, when $r=4$. For $r=5$, this is mentioned in
        \cite[Observation 14]{kim}.
      \item For $\delta=14$, it is known that a generic cubic carries
        an Ulrich bundle of rank 2, or equivalently, that it is
        Pfaffian in the classical sense. An analysis of
        $\cC_{14}$ is worked out in \cite{bolognesi-russo-stagliano}.
      \item For $\delta=18$, the divisor  $\cC_{18}$ is studied
        in \cite{A--H--T--VA}. Our contribution is summarized in
        Theorem \ref{18-theorem}, that clarifies the connections
        between various conditions, notably the presence of Ulrich
        bundles of rank 3, rank-2 instanton bundles of charge 1,
        certain surfaces in $X$, or being linear sections of Coble
        cubics. This reproves and expands \cite[Theorem
        1.3]{truong-yen}, in a geometric and computer-free fashion.
      \item For $\delta=20$, our contribution is Theorem
        \ref{20-theorem}, where we show that a general cubic fourfold $X$
        in $\hat{\cC}_{20}$ carries an Ulrich
        bundles of rank 4 and rank-2 instanton bundles of charge 2,
        and is also Steiner-Pfaffian of type $\Omega_{\bP}(1)^{\oplus
          2}$, while we already observed that any of these conditions
        forces $X$ to lie in $\hat{\cC}_{20}$. We need the aid of a computer here.

        However we do not know whether \textit{all} smooth
        Steiner-Pfaffian cubic fourfolds of type
        $\Omega_{\bP}(1)^{\oplus 2}$ carry Ulrich bundles of rank 4 or
        whether carrying an Ulrich bundles of rank 4 forces the
        presence of a rank-2 instanton of charge 2, or even of a
        rank-3 instanton of charge 1.
        Also, we do not know if a general cubic fourfold $X$
        in $\hat{\cC}_{20}$ carries an Ulrich bundle of rank 5.
        However, we expect these properties to hold true, in view of 
        speculations about Brill-Noether loci similar to those explained
        in the next item.
      \item For rank $r \in \{4,5\}$ and $\delta$ in the above table, different from $8,20$, we do not know if
        a general cubic fourfold $X$ in $\hat{\cC}_\delta$ carries an Ulrich
        bundle of rank $r$. At least for $\delta \in \{8,14,20\}$ we
        do think that this should be the 
        case. Our expectation is based on the following heuristic
        observation. It is known by \cite{faenzi-kim} and expected by
        \cite{manivel-ulrich}  that a general cubic fourfold $X$
        in $\hat{\cC}_\delta$ supports families of Ulrich bundles of rank
        $6$ and $9$, of dimension $26$ and $56$, respectively. Then,
        once fixed an Ulrich bundle $\cU_0$ of rank $r_0 \in \{2, 3,
        5\}$ whose existence
        we proved, the Brill-Noether locus of Ulrich bundles $\cU$ or
        rank $r \in \{6,9\}$
        having a non-zero map $\psi$ toward $\cU_0$ is expected to be
        non-empty by dimension reasons (we do not how to prove that this expectation holds true). Then $\ker(\psi)$ will be an
        Ulrich bundle of rank $r-r_0$, and choosing conveniently $r$
        and $r_0$ should fill the above table for $\delta \in \{8,14,20\}$.
        Anyway, the range $\delta \ge 26$ remains uncharted.
      \end{itemize}
  \end{remark}

\begin{proof}[Proof of Proposition \ref{list of deltas}]
First, note that for a sufficiently general choice of $r-1$ global sections of $\cU$,
the degeneracy locus of these sections is a smooth connected
projective surface $Y \subset X$ of class $c_2(\cU)$.
It is proved in \cite{kim} that the following relations hold
\begin{align}
  \nonumber c_1(\cU) & = r h, &&
  c_2(\cU)   =  Y, \\
  \label{c3 and c4} c_3(\cU) & = \frac{1}{6} r(r+1)(r-2)h^3, &&
  c_4(\cU)  = \frac{1}{12}(-r(r^3+4 r-3)h^4+6Y^2).
\end{align}

The point is that the deformation theory of simple Ulrich
bundles takes place within the Kuznetsov category of $X$.
Namely, based on the K3 structure of $\Ku(X)$ and relying on the theory developed in
\cite{kuznetsov-manivel-markushevich,kuznetsov-markushevich,manypeople},
it was observed in \cite{faenzi-kim} that, if $\cU$ is a simple Ulrich
bundle on $X$, then $\cU$ has a smooth universal deformation space, which
carries a holomorphic symplectic structure.
The outcome of this is that the moduli space of simple sheaves having Chern character
$\bv=\ch(\cU)$  has dimension
\begin{equation}
  \label{dim-moduli}
  m = 2 - \chi(\cU^\vee \otimes \cU) = 2+\frac{r^2(3r^2-2r+3)}{4}-Y^2,
\end{equation}
which is what we want for the first statement.
Computing the discriminant $\delta$ of the sublattice of $H^{2,2}(X,\bZ)$
generated by $h^2$ and $Y$, one finds \eqref{eq:delta} and, if $3$ does
not divide $r$, then $\delta \ne 0$ and $X$ belongs to $\hat{\cC}_\delta$.

\medskip

Next,  observe that, if $\uc(X) <
6$, then $X$ carries a simple Ulrich bundle of rank $ r
\in  \{2,3,4,5\}$.
Indeed, by definition $X$ carries an Ulrich bundle of rank $s \le 5$. Then,
since any sheaf in the Jordan-Hölder filtration of  $\cU$ is again
Ulrich, $X$ supports a stable (hence simple) Ulrich bundle of rank $ r
\in  \{2,3,4,5\}$.

Now we look at the possible values of $Y^2$ under the constraints that
 the dimension of the moduli space of simple sheaves
appearing in \eqref{dim-moduli} is non-negative and that the
discriminant $\delta$ expressed in \eqref{eq:delta} is at least 8.
This, together with the parity condition given by the fact that
$\int_Xc_4(\cU) \in \bZ$, with $c_4(\cU)$ given in \eqref{c3 and c4},
gives only finitely many choices for $Y^2$ for each value of $r \in \{2,3,4,5\}$.
Plugging these choices back into \eqref{dim-moduli} and \eqref{eq:delta}
returns precisely the values of $\delta$ and $m$ appearing in the statement.
\end{proof}

\subsection{Ulrich bundles and instantons}

Again, $X$ is a smooth cubic fourfold. We assume here that $X$
supports a rank-$s$ instanton bundle $\cE$ with charge
$\quantum$ and check that this implies the existence of an Ulrich
bundle of rank $r=s+k$.

\begin{lemma}
\label{lG}
Let $\cE$ be a rank-$s$ instanton bundle with charge
$\quantum$. Then there is a vector bundle $\cG$ of rank $r=s+k$, with $h^0(\cG(-1))=0$, fitting into an exact sequence
\begin{equation}
\label{seqExtension}
0\longrightarrow\cE(-1)\longrightarrow\cG(-1)\longrightarrow\cO_X^{\oplus
k}\longrightarrow0.
\end{equation}
Moreover, $\cG (-1)$ lies in $\Ku(X)$ and $\cG$ has the Chern character of an
Ulrich bundle. Finally, the sheaf $\cE$ is simple if and only if $\cG$ is simple.
\end{lemma}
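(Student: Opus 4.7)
The plan is to build $\cG$ through a universal extension and verify each assertion by long exact sequences read against the cohomology table for $\cE$. First, observe that $\Ext^1_X(\cO_X^{\oplus k}, \cE(-1)) \simeq H^1(\cE(-1))^{\oplus k}$ has dimension $k^2$ by the charge hypothesis. Choosing an isomorphism $\varphi\colon \field^{\oplus k} \xrightarrow{\sim} H^1(\cE(-1))$ and interpreting it as an extension class produces a sequence of the form \eqref{seqExtension} with $\cG(-1)$ locally free of rank $s+k$. Passing to cohomology, the connecting map $H^0(\cO_X^{\oplus k}) \to H^1(\cE(-1))$ is exactly $\varphi$, hence an isomorphism, and combined with $h^0(\cE(-1))=0$ this yields $h^0(\cG(-1)) = h^1(\cG(-1)) = 0$.

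To see $\cG(-1) \in \Ku(X)$, twist \eqref{seqExtension} by $\cO_X(-i)$ for $i=1,2$. The cohomology table of $\cE$ gives $H^*(\cE(-2)) = H^*(\cE(-3)) = 0$, while $H^*(\cO_X(-i)) = 0$ for $i=1,2$ since $X$ is a cubic fourfold, so the long exact sequences force $H^*(\cG(-2)) = H^*(\cG(-3)) = 0$; combined with the previous paragraph this is the defining vanishing for $\Ku(X)$.

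For the Chern character, it is enough to show $\chi(\cG(-t)) = 0$ for $t = 1,\ldots,4$, as this forces the Hilbert polynomial to be the one of an Ulrich bundle of rank $r = s+k$, namely $(r/8)\prod_{j=1}^4 (t+j)$. The cases $t \le 3$ follow from the Kuznetsov step. For $t=4$, formula \eqref{ChiE} evaluated at $t=-4$ gives $\chi(\cE(-4)) = -k$, while $\chi(\cO_X(-3)) = \chi(\omega_X) = 1$ by Serre duality on $X$, so $\chi(\cG(-4)) = -k + k\cdot 1 = 0$.

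For the simplicity equivalence, I would apply $\Hom_X(-,\cG)$ to the $\cO_X(1)$-twist of \eqref{seqExtension}; using $H^i(\cG(-1)) = 0$ for $i=0,1$ from step one, this gives $\Hom_X(\cG,\cG) \simeq \Hom_X(\cE,\cG)$. Then applying $\Hom_X(\cE,-)$ to the same twisted sequence, the relevant obstruction $\Hom_X(\cE, \cO_X(1)^{\oplus k}) = H^0(\cE^\vee(1))^{\oplus k}$ equals $H^4(\cE(-4))^{*\oplus k}$ by Serre duality ($\omega_X \simeq \cO_X(-3)$), and the latter vanishes by the table. Hence $\Hom_X(\cE,\cE) \simeq \Hom_X(\cE,\cG) \simeq \Hom_X(\cG,\cG)$. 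No step is genuinely hard; the only real input beyond bookkeeping is choosing the extension class to be an isomorphism so that every $H^*(\cG(-1))$-type term vanishes in the subsequent arguments, which is precisely the mechanism making this construction feel like the cubic fourfold analogue of Kuznetsov's acyclic extension.
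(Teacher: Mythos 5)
Your proof is correct and follows essentially the same route as the paper's: the universal extension realizing an isomorphism $\field^{\oplus k}\to H^1(\cE(-1))$, the connecting-map argument for $h^0(\cG(-1))=0$, the instanton vanishings forcing $\cG(-1)\in\Ku(X)$, the single Euler characteristic computation $\chi(\cG(-4))=\chi(\cE(-4))+k\chi(\cO_X(-3))=0$, and the two long exact sequences (with the key vanishings $H^i(\cG(-1))=0$ and $H^4(\cE(-4))=0$) identifying $\Hom_X(\cG,\cG)\simeq\Hom_X(\cE,\cG)\simeq\Hom_X(\cE,\cE)$. The only cosmetic difference is that the paper phrases the first step via the connecting map $\Hom_X(\cO_X^{\oplus k},\cO_X^{\oplus k})\to\Ext^1_X(\cO_X,\cE(-1))^{\oplus k}$ rather than on global sections, which amounts to the same thing.
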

\begin{proof}
Since $\dim \Ext_X^1(\cO_X,\cE(-1))=h^1(\cE(-1))=\quantum$, it follows
that we can consider $k$ independent sections
$u_1,\ldots,u_k\in\Ext_X^1(\cO_X,\cE(-1))$.
Thus $u:=(u_1,\ldots,u_k)\in\Ext_X^1(\cO_X^{\oplus k},\cE(-1))$
induces \eqref{seqExtension}.
Trivially $\cG$ is a vector bundle of rank $r=s+k$.
Moreover, by
applying $\Hom_X(\cO_X^{\oplus k},\cdot)$ to \eqref{seqExtension}, the choice of $u$ implies that the connecting map
$$
\Hom_X(\cO_X^{\oplus k},\cO_X^{\oplus
  k})\mapright\partial \Ext_X^1(\cO_X,\cE(-1))^{\oplus k} 
$$
is a isomorphism, as it sends a square matrix $(a_{i,j})$ of size $k$ to 
the vector $(\sum_{1 \le i \le k} a_{i,j} u_i)_{1 \le j \le k}$ of $ \Ext_X^1(\cO_X,\cE(-1))^{\oplus k} $.
In particular $\Hom_X(\cO_X^{\oplus k},\cG(-1))=0$, so $h^0(\cG(-1))=0$. 
Therefore, taking cohomology of \eqref{seqExtension} and using 
the instanton
condition of Definition \ref{dInstanton}, we see that this actually
implies $H^*(\cG(-1))=0$, and moreover  $H^*(\cG(-2))=H^*(\cG(-3))=0$, so that $\cG (-1)$ lies in
$\Ku(X)$.
Furthermore, $\chi(\cG(-4))=\chi(\cE(-4))+k\chi(\cO_X(-3))=0$, so
$\cG$ has the Chern character of an Ulrich bundle.
\medskip

Let us denote by $\zeta$ the inclusion $\cE \to \cG$ so that \eqref{seqExtension} reads
\begin{equation} \label{zeta}
    0 \to \cE(-1) \xrightarrow{\zeta} \cG(-1) \to \cO_X^{\oplus k} \to 0.
\end{equation}
Since we proved $H^*(\cG(-1))=0$, by applying $\Hom_X(\cdot,\cG(-1))$ to \eqref{zeta} we obtain isomorphisms
\begin{equation*}
  \zeta^*_p : \Ext^p_X(\cG,\cG)\simeq \Ext^p_X(\cE,\cG), \qquad \mbox{for all $p \in \bN$.}  
\end{equation*}

Applying $\Hom_X(\cE(-1),\cdot)$ to \eqref{zeta}, for all $p \in \bN$, we get a map
\[
\zeta_*^p : \Ext^p_X(\cE,\cE) \to \Ext^p_X(\cE,\cG)
\]
Now, for all $p \in \{0,\ldots,4\}$, we note that
$\Ext^p_X(\cE(-1),\cO_X)^\vee\cong H^{4-p}(\cE(-4))$ and recall that
$h^{4-p}(\cE(-4))=0$ for $p \ne 1$, while $h^{3}(\cE(-4))=k$.
We deduce that there is an isomorphism
\[
  (\zeta_*^0)^{-1} \circ \zeta_0^* : \Hom_X(\cG,\cG) \simeq \Hom_X(\cE,\cG) \simeq \Hom_X(\cE,\cE),
\]
So the last statement is proved. 
\end{proof}

We will again use the maps $\zeta_*^p$ and $\zeta^*_p$ defined in the previous proof.
The reader should be warned that the notation  $\zeta_*^p$ and $\zeta^*_p$ does not keep track of the functor we apply to the map $\zeta$, as only the cohomological degree and the covariant or contravariant nature of the functor is recorded.

\begin{definition}
    Let $\cE$ be an instanton sheaf on $X$ and let $\cG(-1)$ be the
  extension of $\cE(-1)$ defined by the previous lemma.
  Then we call $\cG$ the
  \textit{acyclic extension} of $\cE$. 
\end{definition}

Note that this terminology is a bit abusive, since $\cG(-1)$, rather than $\cG$, is actually acyclic, as it lies in $\Ku(X)$ by the previous lemma.

\begin{lemma} \label{at-least-1}
Let $\cE$ be an instanton sheaf and $\cG$ its acyclic extension. Then there is a surjection
  \[
    \Ext_X^2(\cE,\cE) \xrightarrow{\zeta^2_*\circ(\zeta^*_2)^{-1}}\Ext_X^2(\cG,\cG) \simeq \Hom_X(\cG,\cG)^\vee.
  \]
\end{lemma}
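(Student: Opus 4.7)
The plan is to assemble the claimed surjection from three ingredients: (i) the K3 Serre duality on the Kuznetsov component to identify $\Ext^2_X(\cG,\cG)$ with $\Hom_X(\cG,\cG)^\vee$, (ii) the isomorphism $\zeta_2^*$ recorded during the proof of Lemma \ref{lG}, and (iii) the long exact sequence obtained from $\Hom_X(\cE,-)$ applied to the defining extension of $\cG$, together with the instanton vanishing to kill an obstruction term.

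First, since Lemma \ref{lG} shows $\cG(-1)\in\Ku(X)$, and since $\Ext^p_X(\cG,\cG)=\Ext^p_X(\cG(-1),\cG(-1))$, the K3 property of $\Ku(X)$ immediately yields the target identification $\Ext^2_X(\cG,\cG)\simeq\Hom_X(\cG,\cG)^\vee$. Second, $\zeta_2^*\colon\Ext^2_X(\cG,\cG)\to\Ext^2_X(\cE,\cG)$ is already known to be an isomorphism from the proof of Lemma \ref{lG}, because applying $\Hom_X(-,\cG(-1))$ to \eqref{zeta} and using $H^*(\cG(-1))=0$ makes all connecting maps null and all restriction maps $\zeta_p^*$ bijective.

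Third, to control $\zeta_*^2$, I would twist \eqref{zeta} by $\cO_X(1)$ and apply $\Hom_X(\cE,-)$, producing the exact sequence
\[
\Ext^2_X(\cE,\cE)\xrightarrow{\zeta_*^2}\Ext^2_X(\cE,\cG)\to\Ext^2_X(\cE,\cO_X(1))^{\oplus k}.
\]
Surjectivity of $\zeta_*^2$ thus reduces to the vanishing $\Ext^2_X(\cE,\cO_X(1))=0$. Using $\omega_X\simeq\cO_X(-3)$, Serre duality on the cubic fourfold gives
\[
\Ext^2_X(\cE,\cO_X(1))\simeq H^2(X,\cE(-4))^\vee,
\]
which is zero by the cohomology table of an instanton on $X$. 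Composing the surjection $\zeta_*^2$ with the isomorphism $(\zeta_2^*)^{-1}$ and the K3 Serre duality delivers the stated surjection onto $\Hom_X(\cG,\cG)^\vee$.

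I do not foresee a serious obstacle: the only thing to watch is the bookkeeping between $\cG$ and $\cG(-1)$ when invoking Serre duality in $\Ku(X)$, and the precise identification of the map $\zeta_*^2\circ(\zeta_2^*)^{-1}$ in the long exact sequence (as opposed to a sign or twist thereof), both of which are routine once the twisted version $0\to\cE\to\cG\to\cO_X(1)^{\oplus k}\to 0$ of \eqref{zeta} is used consistently.
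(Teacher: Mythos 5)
Your proof is correct and reaches the same conclusion as the paper, but it factors the map through the other intermediate $\Ext$ group. The paper goes through $\Ext^2_X(\cG,\cE)$: applying $\Hom_X(-,\cE(-1))$ to \eqref{zeta} and using $h^2(\cE(-1))=h^3(\cE(-1))=0$ gives an isomorphism $\Ext^2_X(\cG,\cE)\simeq\Ext^2_X(\cE,\cE)$, while applying $\Hom_X(\cG(-1),-)$ together with $\Ext^2_X(\cG,\cO_X(1))\simeq H^2(\cG(-4))^\vee=0$ gives a surjection $\Ext^2_X(\cG,\cE)\twoheadrightarrow\Ext^2_X(\cG,\cG)$. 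You instead go through $\Ext^2_X(\cE,\cG)$, recycling the isomorphism $\zeta_2^*\colon\Ext^2_X(\cG,\cG)\simeq\Ext^2_X(\cE,\cG)$ already recorded in the proof of Lemma \ref{lG}, and deducing surjectivity of $\zeta_*^2\colon\Ext^2_X(\cE,\cE)\to\Ext^2_X(\cE,\cG)$ from $\Ext^2_X(\cE,\cO_X(1))\simeq H^2(\cE(-4))^\vee=0$; both routes ultimately rest on equivalent instanton vanishings. The one point you flag as bookkeeping, namely that your composite $(\zeta_2^*)^{-1}\circ\zeta_*^2$ is the map $\zeta^2_*\circ(\zeta^*_2)^{-1}$ of the statement, is genuinely settled by bifunctoriality of $\Ext$: the square formed by the four maps induced by $\zeta$ in either argument commutes, so the two composites through $\Ext^2_X(\cE,\cG)$ and $\Ext^2_X(\cG,\cE)$ coincide. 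Your version has the small advantage of reusing an isomorphism already established, at the price of this extra (routine) identification.
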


\begin{proof}
    The last isomorphism comes from the fact that $\cG (-1)$ lies in
  the K3 category $\Ku(X)$. 
  To get the surjection, we apply $\Hom_X(\cdot,\cE(-1))$ to \eqref{zeta} and use that $h^2(\cE(-1))=h^3(\cE(-1))=0$. This gives an isomorphism
  \[
  \zeta^*_2 : \Ext^2_X(\cG,\cE) \to \Ext^2_X(\cE,\cE).
  \]
  Then we apply $\Hom_X(\cG(-1),\cdot)$ to \eqref{zeta} and use $\Ext^2_X(\cG,\cO_X(1))=0$, which follows from the previous lemma by Serre duality.
\end{proof}

When $\cE$ is a simple instanton sheaf, then we have a surjection
\begin{equation}
\label{one-d}    
    \Ext_X^2(\cE,\cE) \xrightarrow{\zeta^2_*\circ(\zeta^*_2)^{-1}} \Ext^2_X(\cG,\cG) \simeq \bk.
  \end{equation}

  Hence $\Ext_X^2(\cE,\cE)$ is always at least
  one-dimensional.
  Recall that, in our terminology, $\cE$ is called unobstructed if $\Ext_X^2(\cE,\cE)$ is precisely one-dimensional.

\begin{proposition} \label{deforms to Ulrich}
  Let $\cE$ be a simple unobstructed instanton on $X$ and let $\cG$ be the
  acyclic extension of $\cE$, given by Lemma \ref{lG}.
  Then $\cG$ deforms flatly to an Ulrich bundle on $X$.
\end{proposition}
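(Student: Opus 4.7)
The plan is to combine three ingredients: (i) smoothness of the relevant moduli space at $[\cG]$, coming from the K3 structure on $\Ku(X)$; (ii) openness of the Ulrich locus inside this moduli, reduced to the single condition $\Hom_X(\cF,\cO_X(1))=0$; and (iii) a Yoneda non-degeneracy producing a tangent direction that kills every morphism $\cG\to\cO_X(1)$, giving a nearby Ulrich deformation.

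By Lemma~\ref{lG}, $\cG(-1)\in\Ku(X)$ and $\cG$ is simple. Since $\Ku(X)$ is a $2$-Calabi--Yau category, Mukai-type smoothness for moduli of simple objects, as already invoked in the proof of Proposition~\ref{list of deltas}, shows that the moduli space $M$ of simple sheaves $\cF$ on $X$ with $\ch(\cF)=\ch(\cG)$ and $\cF(-1)\in\Ku(X)$ is smooth at $[\cG]$, of dimension $\dim\Ext_X^1(\cG,\cG)=2-\chi(\cG,\cG)$.

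Next, I would observe that $\cF\in M$ is Ulrich iff $\cF$ is locally free and $\cF(-2)\in\Ku(X)$. Given $\cF(-1)\in\Ku(X)$, the second condition reduces to $H^*(\cF(-4))=0$; combining $\chi(\cF(-4))=0$ with the semicontinuous vanishings $h^i(\cF(-4))=0$ for $i\in\{0,1,2\}$ near $[\cG]$, Serre duality rewrites the Ulrich condition as the single open requirement $\Hom_X(\cF,\cO_X(1))=0$. Local freeness is likewise open. Denote the resulting open subset by $U\subseteq M$.

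Finally, I would produce a first-order deformation of $\cG$ in $U$. From $0\to\cE\to\cG\to\cO_X(1)^{\oplus k}\to 0$, the Ext computations analogous to those in Lemma~\ref{at-least-1} give $\Hom_X(\cG,\cO_X(1))\cong\bk^k\cong\Ext_X^1(\cG,\cO_X(1))$ and $\Ext_X^{\geq 2}(\cG,\cO_X(1))=0$. For $\xi\in\Ext_X^1(\cG,\cG)$ and $\phi\in\Hom_X(\cG,\cO_X(1))$, the Yoneda product $\xi\cdot\phi\in\Ext_X^1(\cG,\cO_X(1))$ is the obstruction for $\phi$ to lift along the first-order deformation $\cG_\xi$ of $\cG$. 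If one exhibits $\xi$ making
\[
\phi\longmapsto\xi\cdot\phi,\qquad \Hom_X(\cG,\cO_X(1))\longrightarrow\Ext_X^1(\cG,\cO_X(1))
\]
an isomorphism, then no non-zero morphism $\cG\to\cO_X(1)$ lifts, so by upper semicontinuity $\Hom_X(\cG_\xi,\cO_X(1))=0$ and $\cG_\xi\in U$, whence $\cG$ deforms flatly to an Ulrich bundle.

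\textbf{Main obstacle.} The crux is producing this non-degenerate direction $\xi$. I would dissect the Yoneda pairing via the defining extension, using in particular the vanishings $\Ext_X^*(\cO_X(1),\cG)=0$ coming from $\cG(-1)\in\Ku(X)$, which make $\Ext_X^i(\cG,\cG)\to\Ext_X^i(\cE,\cG)$ an isomorphism, and analogous manipulations to reduce $\Ext_X^*(\cG,\cO_X(1))$ to quantities involving $\cE$. The unobstructedness of $\cE$ enters here: it forces the surjection of Lemma~\ref{at-least-1} to be an isomorphism $\Ext_X^2(\cE,\cE)\xrightarrow{\sim}\Ext_X^2(\cG,\cG)$, so the symplectic Yoneda form on $M$ is detected already at the level of $\cE$. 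The $\xi$ sought can then be constructed from a deformation of the extension class $u\in\Ext_X^1(\cO_X^{\oplus k},\cE(-1))$, whose pairing with $\phi$ is controlled by this symplectic form; verifying that the resulting $\xi$ realises the required isomorphism of Yoneda maps is the most delicate step of the argument.
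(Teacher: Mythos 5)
Your first two steps coincide with the paper's: the smooth symplectic moduli space at $[\cG]$ via the K3 structure of $\Ku(X)$, and the reduction of the Ulrich condition on a nearby deformation $\cG'$ to the single vanishing $\Hom_X(\cG'(-1),\cO_X)=0$, using semicontinuity of $h^i(\cG'(-4))$ for $i\le 2$ together with $\chi(\cG'(-4))=0$. The divergence, and the problem, is in the last step, which is where all the content of the proposition lives. You reduce everything to exhibiting a class $\xi\in\Ext^1_X(\cG,\cG)$ for which the Yoneda multiplication $\phi\mapsto\xi\cdot\phi$ is an isomorphism $\Hom_X(\cG,\cO_X(1))\to\Ext^1_X(\cG,\cO_X(1))$, but you do not construct such a $\xi$; you explicitly defer "the most delicate step". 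As it stands this is not a proof but a reduction to an unproved non-degeneracy statement, and it is precisely at this point that the hypothesis of unobstructedness must do quantitative work — your remark that it makes $\Ext^2_X(\cE,\cE)\to\Ext^2_X(\cG,\cG)$ an isomorphism is correct but is not by itself enough to produce the required $\xi$.

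There is also a structural weakness in the route itself: the first-order criterion you propose is sufficient but not necessary. Even granting that the generic deformation of $\cG$ is Ulrich, the obstruction to lifting a morphism $\cG\to\cO_X(1)$ along a curve in the moduli space may only appear at second or higher order, in which case no $\xi$ with injective Yoneda multiplication exists and your argument cannot close. The paper avoids this entirely by arguing globally rather than infinitesimally: assuming the generic nearby $\cG'$ has $\hom(\cG'(-1),\cO_X)=\ell>0$, it shows the coevaluation $\cG'(-1)\to\cO_X^{\oplus\ell}$ is surjective, so its kernel twisted by $\cO_X(1)$ is an instanton $\cK'$ of rank $r-\ell$ and charge $\ell$; the assignment $\cG'\mapsto\cK'$ is generically injective (since $\cG'$ is recovered as the acyclic extension of $\cK'$), forcing $m_\cG\le m_{\cK'}$. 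Bounding $\dim\Ext^1_X(\cK,\cK)\le\dim\Ext^1_X(\cE,\cE)+k^2-\ell^2$ and computing $\dim\Ext^1_X(\cE,\cE)$ exactly from unobstructedness via Riemann--Roch yields $\ell^2\le 0$, a contradiction. If you want to salvage your approach, you would need either to prove the non-degeneracy of the Yoneda pairing directly (which seems at least as hard as the paper's dimension count, and to which it would essentially reduce), or to replace the first-order criterion by a bound on the dimension of the Brill--Noether locus $\{\hom(\cG'(-1),\cO_X)\ge\ell\}$ — which is what the paper's argument in fact accomplishes.
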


\begin{proof}
  By Lemma \ref{lG}, $\cG (-1)$ lies in $\Ku(X)$ and $\cG$ has the
  Chern character of an Ulrich bundle. Hence the first part of the
  proof of Proposition \ref{list of deltas} applies to $\cG$ in place
  of $\cU$ and shows that $\cG$ deforms flatly in a smooth family of sheaves
  of dimension
  \begin{equation}
    \label{mG}
  m_\cG = 2 - \chi(\cG ^\vee \otimes \cG) = 2+\frac{r^2(3r^2-2r+3)}4-Y^2,
  \end{equation}
  where $r=\rk(\cG) = s+k$ and $Y^2=c_2(\cG)$,
  $s$ and $k$ being the rank and the charge of $\cE$.

  \medskip
  
  Let $\cG'$ be a general sheaf, lying in a smooth connected
  neighborhood of $\cG$
  inside the moduli space of simple sheaves having Chern character
  $\ch(\cG)$. To prove that $\cG'$ is an Ulrich bundle, it suffices to
  show that $\Hom(\cG'(-1),\cO_X)=0$.
  Indeed first note that $\Hom(\cG'(-1),\cO_X)$ is dual to
  $H^4(\cG'(-4))$ and that $\cG'(-1)$ lies in $\Ku(X)$.
  Second, observe that  $H^i(\cG(-4))=0$
  for $i=0,1,2$ so that $H^i(\cG'(-4))=0$ for $i=0,1,2$ by
  semicontinuity. Finally, since $\chi(\cG'(-4))=0$, we have that $H^4(\cG'(-4))=0$
  implies $H^3(\cG'(-4))=0$ and $\cG'$ is Ulrich.

  \medskip
  
  To check that $\Hom(\cG'(-1),\cO_X)=0$, we assume the contrary and
  seek a contradiction. Say that, for $\cG'$ general in a neighborhood
  of $\cG$, the space
  $\Hom(\cG'(-1),\cO_X)$ is of dimension $\ell$ for some $\ell > 0$.
  Again by semicontinuity we have $\ell \le k$. Then, the coevaluation
  of sections $g : \cG'(-1) \to \cO_X^{\oplus \ell}$ is
  surjective, for this map specializes to the composition $\cG'(-1) \to
  \cO_X^{\oplus k} \to \cO_X^{\oplus \ell}$ for some choice of a
  surjection $\cO_X^{\oplus k} \to \cO_X^{\oplus \ell}$. Therefore
  $\ker(g)$ is a vector bundle $\cK'(-1)$, where $\cK'$ is actually an
  instanton on $X$, of rank $r-\ell$ and charge $\ell$. Then $\cK'$ is a flat
  deformation of $\cK$, a vector bundle fitting into
  \begin{equation}
    \label{K'}
    0 \to \cE(-1) \to \cK(-1) \to \cO_X^{\oplus (k-\ell)} \to 0.    
  \end{equation}

  Again $\cK$ is simple by Lemma \ref{lG}, because the acyclic
  extension of $\cK$ is still $\cG$, so that the moduli space of
  simple sheaves with Chern character $\ch(\cK')$
  has dimension $m_{\cK'}$, with
  \begin{equation}
    \label{<mK}
    m_{\cK'} \le \dim (\Ext^1_X(\cK',\cK')) \le \dim (\Ext^1_X(\cK,\cK)),    
  \end{equation}
  where the second inequality follows again by semicontinuity.
  \medskip
  
  Under our assumption, we may define a
  rational map by sending  $\cG'$ to $\cK'$, defined in the
  neighborhood of $\cG$ consisting of simple sheaves $\cG'$ in $\Ku(X)$
  such that $\Hom(\cG'(-1),\cO_X)$ is of dimension $\ell$. Note that this map is generically
  injective, for $\cG'$ can be recovered from $\cK'$ as the acyclic
  extension of $\cK'$. Therefore we must have
  \begin{equation}
    \label{mG<mK'}
  m_\cG \le m_{\cK'}.  
  \end{equation}
  
  Further, we observe that, under the assumption that $\cE$ is unobstructed, we have
  \begin{equation}
    \label{ExtE}
   \dim (\Ext^1_X(\cE,\cE)) = 2+\frac {r^2(3r^2-2r+3)}4 -Y^2-k^2.    
  \end{equation}
Indeed, this follows easily from Riemann-Roch, together with the fact that $\Ext_X^p(\cE,\cE)$ is
zero for $p > 2$ and one-dimensional for $p=0,2$.
\medskip

To arrive to a contradiction, we give an upper bound 
$\dim (\Ext^1_X(\cK,\cK))$.
With the same argument as in the proof of Lemma \ref{lG}, we apply $\Hom_X(\cdot,\cK(-1))$
  and $\Hom_X(\cdot,\cE(-1))$ to \eqref{K'}. Since ${\cK}'$ is an instanton of charge $l$, 
  we get:
  \begin{align} \label{K<E}
   \dim (\Ext^1_X(\cK,\cK)) &\le  \dim
    (\Ext^1_X(\cE,\cE))+(k-\ell)(h^1(\cK(-1))+h^3(\cE(-4)) =\\
                            \nonumber &= \dim (\Ext^1_X(\cE,\cE)) + k^2-\ell^2.
  \end{align}
  Therefore, putting together \eqref{mG}, \eqref{<mK}, \eqref{K<E},
  \eqref{mG<mK'} and \eqref{ExtE}, we get:
  \begin{align*}
    2+\frac{r^2(3r^2-2r+3)}4 -Y^2 = m_\cG &\le m_{\cK'} \le \dim
                                            (\Ext_X^1(\cK,\cK)) \le \\
                                          & \le \dim (\Ext^1_X(\cE,\cE)) + k^2-\ell^2 \le \\
                                          & \le 2+\frac {r^2(3r^2-2r+3)}4 -Y^2-\ell^2 
  \end{align*}
  However, this just gives  $\ell^2 \le 0$, which is against our
  assumption that $\ell > 0$.
  This contradiction achieves the proof of the proposition.
\end{proof}

Putting together the previous propositions completes the  proof of Theorem \ref{theorem ulrich}. Indeed, the statements about the dimension and symplectic nature of $\fU_X(r,a)$ are given in Proposition \ref{list of deltas}, as well as the details avout $r,\delta$ and $m$. Also, given an instanton bundle $\cE$, the acyclic extension $\cG$ of $\cE$ is a deformation of $\cE \oplus \cO_X(1)^{\oplus k}$ which in turn deforms to an Ulrich bundle by Proposition \ref{deforms to Ulrich}.

\subsection{Unobstructed instantons}

\label{section:unobstructed}

To conclude this section, we show that, on a smooth cubic fourfold, unobstructed instantons, which we defined as simple instanton sheaves such that $\dim(\Ext^2_X(\cE,\cE))$ equals one (which is the minimal possible value of such dimension) define smooth points of the moduli space of simple sheaves on $X$.
This justifies the adjective \textit{unobstructed} and allows to compute the dimension of such moduli space in many situations.
One should notice the difference for instance with respect to cubic threefolds $Y$, where unobstructed instantons $\cE$ are typically defined as having $\Ext^2_Y(\cE,\cE)=0$ -- we refer for instance to \cite{faenzi-instanton, faenzi-comaschi} for a construction of stable instanton bundles $\cE$ of any admissible charge and rank on a cubic threefold $Y$ satisfying  the vanishing $\Ext^2_Y(\cE,\cE)=0$.
 \medskip
 
Here we prove the main result of this subsection, which is Theorem \ref{unobstructed}.
As a first observation, notice that the assumption that $\cE$ is simple in the statement of Theorem \ref{unobstructed} is actually redundant, so that Theorem \ref{unobstructed-intro} from the introduction will be also proved.
Indeed, if $\cE$ is an unobstructed instanton sheaf, then $\Ext^2_X(\cE,\cE)$ is one-dimensional by definition, so that, if $\cG$ is the acyclic extension of $\cE$, then $\Ext^2_X(\cG,\cG)$ is also one-dimensional by Lemma \ref{at-least-1}. Hence $\cG$ is simple, because $\cG$ lies in $\Ku(X)$. Thus, by Lemma \ref{lG}, $\cE$ is also simple.

  \begin{proof}[Proof of Theorem \ref{unobstructed}]
      Following \cite{buchweitz-flenner:atiyah, buchweitz-flenner:semiregularity}, see also \cite{kuznetsov-markushevich}, one introduces the map
      \[
      \sigma^\cE = \sum_{q \ge 0} \sigma^\cE _q : \Ext^2_X(\cE,\cE) \to \bigoplus_{q \ge 0} H^{q+2}(X,\Omega_X^q).
      \]
      Here $\sigma^\cE_q$ is defined by first cupping a class $\xi \in \Ext^2_X(\cE,\cE)$ with the $q$-th exterior power of the Atiyah class of $\cE$,  which we denote by $\wedge^q \At(\cE) \in \Ext_X^q(\cE,\cE \otimes \Omega^q_X)$ and then applying the trace map $\tr : \Ext^{q+2}_X(\cE,\cE \otimes \Omega^q_X) \to H^{q+2}(X,\Omega_X^q)$. More precisely, for a smooth cubic fourfold $X$, the relevant part of $\sigma ^cE$ lands into $H^3(X,\Omega_X)$, which is identified with $\bk$, once we choose a generator of $H^{1,3}(X)$.
      To write $\sigma ^\cE$ more explicitly, we denote by $\Upsilon$ the Yoneda product of any triple of sheaves $\cA, \cB, \cC$, so that we have, for any $p,q \in \bN$:
      \[
      \Upsilon : \Ext^p_X(\cA,\cB) \otimes \Ext^q_X(\cB,\cC) \to \Ext^{p+q}_X(\cA,\cC).
      \]
      Incidentally, we recall that $\Upsilon$ is functorial in all its arguments with respect to morphisms of sheaves. For instance, let us spell out one incarnation of functoriality with respect to a morphism $\theta : \cB \to \cB'$. We have a diagram
      \[
      \xymatrix@-2ex{
      \Ext^p_X(\cA,\cB) \otimes \Ext^q_X(\cB,\cC) \ar_{\theta^p_*}[d] \ar^-\Upsilon[drrr] & \\ 
      \Ext^p_X(\cA,\cB') \otimes \Ext^q_X(\cB,\cC)  &&& \Ext^{p+q}_X(\cA,\cC) \\
      \Ext^p_X(\cA,\cB') \otimes \Ext^q_X(\cB',\cC) \ar^{\theta_q^*}[u] \ar_-\Upsilon[urrr]
      }
      \]
    Given classes $\alpha \in \Ext^p_X(\cA,\cB)$, $\beta \in \Ext^q_X(\cB,\cC)$, 
    $\alpha' \in \Ext^p_X(\cA,\cB')$, $\beta' \in \Ext^q_X(\cB',\cC)$ satisfying 
    \[\theta_*^p(\alpha)=\alpha', \qquad \theta^*_q(\beta')=\beta,
    \]
    the functoriality of $\Upsilon$ is expressed by the equality
    \[
    \Upsilon(\alpha\otimes \beta)=\Upsilon(\alpha'\otimes \beta').
    \]
      
      Having set this up, we can write explicitly the formula for $\sigma^\cE=\sigma_1^\cE$, which reads 
      \[\sigma^\cE(\xi) = \tr(\Upsilon(\xi \otimes \At(\cE))).\]          
    Now, according to \textit{loc. cit.}, the moduli space of simple sheaves on $X$ is smooth at $[\cE]$ if the map $\sigma$ is injective. So our goal will be to check that $\sigma^\cE_1$ is indeed injective.

    Let $\cG$ be the
  acyclic extension of $\cE$. 
  We know by the argument of \cite[Theorem 31.1]{manypeople}, which is also underlying the proof of Proposition \ref{list of deltas}, that $\sigma^\cG$ is injective at the point $[\cG]$ of the moduli space of simple sheaves on $X$.
Under our assumption, by Lemma \ref{at-least-1}, $\Ext^2_X(\cE,\cE)$ is identified with $\Ext^2_X(\cG,\cG)$ by the map $\zeta^2_*\circ(\zeta^*_2)^{-1}$. Note that both $\zeta^2_*$ and $\zeta^*_2$ are isomorphisms in this case as we checked in the proof of Lemma \ref{at-least-1} that $\zeta^*_2$ is always an isomorphism and their composition is an isomorphism by assumption.
Moreover, we have a natural injective map
  \[
  \psi : \Ext^1_X(\cG,\cG \otimes \Omega_X) \to \Ext^1_X(\cE,\cE \otimes \Omega_X)
  \]
  such that $\psi(\At(\cG))=\At(\cE)$.
  To get this map, we first apply $\Hom_X(-,\cG(-1) \otimes \Omega_X)$ to \eqref{zeta}. We recall that $h^2(\cG(-4))=h^1(\cG(-2)=h^0(\cG(-1))=0$, so that dualizing \eqref{seqTangentHypersurface} and \eqref{seqNormal} and tensoring with $\cG(-1)$
  we get $h^1(\cG(-1) \otimes \Omega_X)=0$.
  This provides an injection  
    \[
    \zeta^*_1 : \Ext^1_X(\cG,\cG \otimes \Omega_X) \hookrightarrow
    \Ext^1_X(\cE,\cG \otimes \Omega_X) 
    \]
  Then, tensoring \eqref{zeta} with $\Omega_X(1)$ and applying $\Hom_X(\cE,-)$ gives
  a map
  \[
  \zeta_*^1 : \Ext^1_X(\cE,\cE
  \otimes \Omega_X) \to \Ext^1_X(\cE,\cG \otimes \Omega_X) 
  \]
  We check that $\zeta_*^1$ is an isomorphism, since
  \[
  \Ext^q_X(\cE,\Omega_X(1))=0, \qquad \mbox{for $q \in \{0,1\}$.}
  \]
  Indeed, using Serre duality, tensoring with $\cE(-4)$, the sequences \eqref{seqTangentHypersurface} and \eqref{seqNormal}, we see that the above vanishing follows from $h^{q+1}(\cE(-3))=h^{q}(\cE(-1))=0$ for $q \in \{2,3\}$ and $h^4(\cE(-4))=0$, which in turn are given by Definition \ref{dInstanton}.
In conclusion, the map $\psi$ is $(\zeta_*^1)^{-1} \circ \zeta_1^*$. 
The fact that $\zeta_*^1(\At(\cE))=\zeta_1^*(\At(\cG))$ comes from the functoriality of the Atiyah class.

\medskip
We are in position to check that $\sigma_1^\cE$ is injective. Let $\xi \in \Ext^2_X(\cE,\cE)$ satisfy 
$\sigma_1^\cE(\xi)=0$.
Set $\eta = \zeta^2_*((\zeta^*_2)^{-1}(\xi))$,
so $\eta \in \Ext^2_X(\cG,\cG)$ vanishes if and only if $\xi$ vanishes.
We have:
\begin{align*}
    0 = \sigma_1^\cE(\xi)=& \tr(\Upsilon(\xi \otimes \At(\cE))) = \\
    =& \tr(\Upsilon(\zeta_2^*((\zeta_*^2)^{-1}(\eta)) \otimes \At(\cE))) = \\
=& \tr(\Upsilon(\zeta_2^*((\zeta_*^2)^{-1}(\eta)) \otimes (\zeta_*^1)^{-1}(\zeta_1^*(\At(\cG))))) = \\
=& \tr(\Upsilon((\zeta_*^2)^{-1}(\eta) \otimes \zeta_1^*(\At(\cG)))) = \\
=& \tr(\Upsilon(\eta \otimes \At(\cG))) = \sigma_1^\cG(\eta),
\end{align*}
where the equalities follow from the functoriality of the Yoneda pairing mentioned above with respect to the morphism $\zeta$. 
Therefore, since $\sigma_1^\cG$ is injective, we get $\eta=0$, so that $\xi=0$ and $\sigma_1^\cE$ is also injective.
\end{proof}

\section{Rank two instanton bundles on smooth cubic hypersurfaces}
\label{section:instanton of rank 2}

We discussed in the previous section the relationship between
instantons, Ulrich
complexity, and Hassett divisors, expressed by Theorem \ref{theorem ulrich}.
From now on we focus on instantons of rank 2 and analyse some
particular Hassett divisors under this point of view.
Note that if $\cE$ is an instanton of rank $2$ and charge $k$ on a smooth cubic
fourfold $X$, then $0 \le k \le 7$ by Corollary \ref{cBound}.
Also, if $Y=c_2(\cE(k h))$ is homologous to a complete intersection in $X$, then $Y\sim\lambda h^2$ where
$$
\lambda:=\frac13(3\quantum^2+7\quantum+5)\in\bZ.
$$
thanks to \eqref{Degree} with $s=k$.
Moreover $-k^2 +5k
+14=3Y^2-\lambda^2=0$.
It follows that $\quantum=7$:  in this case
$h^2Y=201$, hence it could be $Y\sim 67h^2$ inside $X$. In particular
$Y$ should be homologically equivalent to a complete intersection of
divisors of degree $1$ and $67$ inside $X$. 
Thus the non-special smooth cubic hypersurface cannot support rank
two instanton bundles with charge $\quantum\ne7$. As pointed
out in \cite[Example 7.2]{An--Cs2} there exist special smooth cubic
hypersurfaces endowed with a rank two instanton bundle with charge $0\le \quantum\le 4$.
On the other hand, the existence of smooth cubic hypersurfaces endowed
with a rank two instanton bundle with charge $5\le \quantum\le
7$ is wide open. 

Using Formula \ref{delta-formula} of Lemma  \ref{delta}, or
Proposition \ref{pSurfaceY}, one obtains the following table of
possible values of the classes $Y^2$, $h^2Y$ and their discriminant
$\delta$, or equivalently of the discriminant of $h^2c_2(\cE)$ and
$c_2(\cE)^2$, where $\cE$ is a rank-2 instanton of charge $k$ on a
smooth cubic fourfold $X$ and $Y$ is a surface appearing as zero-locus of a
general section of $\cE(k)$.

\begin{table}[H] \label{table-k-delta}
\centering
\renewcommand{\arraystretch}{1.1}
\begin{tabular}{c|c|c|c|c|c|c|c|c}
\toprule
{$\quantum$} & 0 & 1 & 2 & 3 & 4 & 5 & 6 & 7 \\
  \midrule
  \midrule
  $\delta$     & 14 & 18 & 20 & 20 & 18 & 14 & 8  & 0   \\
\midrule
  $h^2 Y$      & 5  & 15 & 31 & 53 & 81 & 115& 155& 201 \\
\midrule
  $Y^2$        & 13 & 81 & 327& 943& 2193& 4413& 8011& 13467 \\
\bottomrule
\end{tabular}
\caption{\textit{Values of $\quantum$, $\delta$, $h^2 Y$, and $Y^2$.}}
\end{table}

In what follows we will characterize $\cC_{14}$, $\cC_{18}$ and $\cC_{20}$ in terms of rank two instanton bundles with low charge. 
We start with an easy lemma.
\begin{lemma} \label{silly lemma}
  Let $\cE$ be a rank-2 vector bundle on a smooth cubic fourfold $X$
  with $c_1(\cE)=2h$. Then $\cE$ is an instanton of charge $k$ if and
  only if:
  \begin{align*}
    h^0(\cE(-1)) & = 0, &&   h^1(\cE(-1)) = k,\\
    h^1(\cE(-2)) & = 0, && h^2(\cE(-2)) = 0.
  \end{align*}
\end{lemma}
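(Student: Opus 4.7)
Proof proposal.

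One direction is immediate: the four listed vanishings appear in the instanton cohomology table displayed at the start of Section \ref{section:Pfaffians}. For the converse, the plan is to bootstrap the four hypotheses to that full table by combining Serre duality on $X$ with a ladder of restrictions $X \supset H \supset S$ to a smooth cubic threefold $H$ and a smooth cubic surface $S$. First, since $c_1(\cE)=2h$ and $\omega_X \simeq \cO_X(-3)$, one has $\cE^\vee \simeq \cE(-2)$, so Serre duality reads $h^i(\cE(t)) \simeq h^{4-i}(\cE(-5-t))^\vee$; each of the four hypotheses thus acquires a dual twin at $t=-3$ or $t=-4$. Multiplication by a non-zero section of $\cO_X(1)$ provides injections $\cE(-t-1) \hookrightarrow \cE(-t)$, so $h^0(\cE(-1))=0$ propagates to $h^0(\cE(-t))=0$ for every $t \ge 1$ and, dually, kills the entire $h^4$-row of the table.

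Next, restrict to a generic smooth hyperplane section $H$ (a cubic threefold) and write $\cE_H := \cE|_H$. The sequence $0 \to \cE(t-1) \to \cE(t) \to \cE_H(t) \to 0$ at $t=-1,-2$, combined with the hypotheses and the vanishings just obtained, yields
\[
h^0(\cE_H(-1))=0, \quad h^1(\cE_H(-1))=k, \quad h^1(\cE_H(-2))=h^2(\cE_H(-2))=0.
\]
Since $\omega_H \simeq \cO_H(-2)$ and $\cE_H^\vee \simeq \cE_H(-2)$, Serre duality on $H$ upgrades these to the full rank-$2$ instanton conditions of charge $k$ on $H$ (most importantly $h^3(\cE_H(-3))=0$ and $h^2(\cE_H(-3))=k$). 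The Euler-characteristic formula \eqref{ChiE} applied on $H$ then gives $\chi(\cE_H(-1))=-k$, which together with the preceding vanishings forces the crucial extra identity $h^2(\cE_H(-1))=0$.

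Equipped with this, the remaining entries of the table on $X$ are produced by the restriction sequences together with monotonicity of $h^0$ on $H$, without any further Euler-characteristic computation on $X$. Indeed, the sequence at $t=-1$ now gives $h^2(\cE(-1))=0$ (hence $h^2(\cE(-4))=0$ by Serre duality); at $t=-2$, $h^1(\cE(-3))=h^0(\cE_H(-2))=0$ (hence $h^3(\cE(-2))=0$); at $t=-3$, $h^1(\cE(-4))=h^0(\cE_H(-3))=0$ (hence $h^3(\cE(-1))=0$); at $t=-4$, $h^1(\cE(-5))=h^0(\cE_H(-4))=0$ (hence $h^3(\cE(0))=0$). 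To finish, intersect $H$ with a second general hyperplane to obtain a smooth cubic surface $S$; a parallel restriction analysis on $H$ yields $h^0(\cE_S(-1))=0$ for $\cE_S := \cE|_S$, so Serre duality on $S$ ($\omega_S \simeq \cO_S(-1)$) combined with monotonicity of $h^0$ on $S$ gives $h^2(\cE_S) = h^0(\cE_S(-3))=0$. Pulling back through $0 \to \cE_H(-1) \to \cE_H \to \cE_S \to 0$ and then through $0 \to \cE(-1) \to \cE \to \cE_H \to 0$ yields $h^2(\cE(0))=0$, and Serre duality gives $h^2(\cE(-5))=0$.

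The main obstacle is that $h^2(\cE(-1))$ and $h^2(\cE(0))$ cannot be extracted from Serre duality and Riemann-Roch on $X$ alone: these are precisely the slots where the Euler characteristic on $X$ yields only matching conditions between unknown cohomology groups. The remedy is the ladder of cubic sections $X \supset H \supset S$, whose distinct dualizing sheaves $\cO_H(-2)$ and $\cO_S(-1)$ provide the extra cohomological identities needed to close the table.
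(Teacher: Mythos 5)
Your proof is correct, and its opening Serre-duality step is in fact the entirety of the paper's proof: since $\cE^\vee\simeq\cE(-2)$ and $\omega_X\simeq\cO_X(-3)$, the identities $h^p(\cE(-3))=h^{4-p}(\cE(-2))$ and $h^p(\cE(-4))=h^{4-p}(\cE(-1))$ turn the four hypotheses into all eight conditions of Definition \ref{dInstanton} for $n=4$ (namely $h^0(\cE(-1))=h^4(\cE(-4))=0$, $h^1(\cE(-2))=h^3(\cE(-3))=0$, $h^2(\cE(-2))=h^2(\cE(-3))=0$, and $h^1(\cE(-1))=h^3(\cE(-4))=k$), and the lemma is proved at that point. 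Everything after the sentence ``each of the four hypotheses thus acquires a dual twin'' is superfluous for this statement: you are reconstructing the full cohomology table displayed at the start of \S\ref{section:Pfaffians}, but that table is a \emph{consequence} of being an instanton (recorded there for later use), not the definition. In particular the vanishings $h^2(\cE(-1))=h^2(\cE(0))=0$ that you single out as ``the main obstacle'' are simply not required by Definition \ref{dInstanton}, so the ladder $X\supset H\supset S$, the verification that $\cE_H$ is an instanton on the cubic threefold $H$, and the use of \eqref{ChiE} on $H$ are all unnecessary here. That said, this extra material is itself sound (I checked the restriction sequences and the Euler-characteristic step), and it amounts to a proof of the unproved table of \S\ref{section:Pfaffians} in the rank-$2$ case — worth keeping as a separate remark, but it should be decoupled from this lemma, whose correct proof is your first paragraph alone.
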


\begin{proof}
Since $\wedge^2 \cE \simeq \cO_X(2)$, we have
$\cE^\vee \simeq \cE(-2)$. Then Serre duality gives, for all $p \in
\{0,\ldots,4\}$ : 
\begin{align*}
    h^p(\cE(-3))& =h^{4-p}(\cE^\vee)=h^{4-p}(\cE(-2)),\\
    h^p(\cE(-4))& =h^{4-p}(\cE(-1)).
\end{align*}
Therefore, the conditions are sufficient to comply with Definition
\ref{dInstanton}.
\end{proof}

\subsection{General cubic hypersurfaces in
  $\cC_{14}$.} \label{14:section} 
Recall that a smooth cubic hypersurface $X\subseteq\p5$ supports a
rank two Ulrich bundle if and only if it is linear Pfaffian, if and
only if it contains a Del Pezzo surface of degree 5 (e.g. see \cite[Proposition 9.2]{Bea}).

In view of what we will prove in the two next subsections we close this subsection with another different characterization of the general Pfaffian cubic hypersurface in $\p5$. 

\begin{proposition}
\label{pEquivalence14}
A smooth cubic fourfold $X$ supports a rank two Ulrich bundle $\cE$ if and only if there is a smooth, connected surface $Y\subseteq X$ not contained in any quadric and such that $\deg(Y)=14$, $q(Y)=0$, $p_g(Y)=6$, $\omega_Y\cong\cO_Y(1)$.
\end{proposition}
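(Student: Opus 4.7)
The plan is to prove the two directions separately using Proposition \ref{pSurfaceY} for $(\Rightarrow)$ and the Hartshorne--Serre construction of Theorem \ref{tSerre} for $(\Leftarrow)$, with the final step being the verification that the resulting rank-$2$ bundle is indeed Ulrich via the cohomological criterion of Lemma \ref{silly lemma} applied in the case $k=0$.

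For $(\Rightarrow)$, I would observe that a rank-$2$ Ulrich bundle is an instanton of charge $0$, so by Remark \ref{rFirstSection} both $\cE$ and $\cE(1)$ are globally generated. For a general section $\sigma\in H^0(\cE(1))$, Bertini makes $Y=(\sigma)_0$ a smooth surface of pure dimension $2$. Substituting $d=3$, $s=1$, $k=0$ into Proposition \ref{pSurfaceY} yields $\deg(Y)=14$ from \eqref{Degree}, $\omega_Y\cong\cO_Y(1)$ from \eqref{Omega}, $\chi(\cO_Y)=7$ from \eqref{Chi}, $q(Y)=0$ from part (2), and the non-existence of a containing quadric from \eqref{Quadric}; together these force $p_g(Y)=6$.

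For $(\Leftarrow)$, adjunction with $\omega_X\cong\cO_X(-3)$ gives $\det(\cN_{Y|X})\cong\omega_Y\otimes\cO_Y(3)\cong\cO_Y(4)$. The vanishings $h^1(\cO_X(-4))=h^2(\cO_X(-4))=0$ on the smooth cubic fourfold are standard, so Theorem \ref{tSerre} applied with $\mathcal L=\cO_X(4)$ produces a unique rank-$2$ vector bundle $\cA$ with $\det(\cA)\cong\cO_X(4)$ and a section vanishing exactly along $Y$. Setting $\cE:=\cA(-1)$, this has $c_1(\cE)=2h$ and fits into
\[
0\to\cO_X(-1)\to\cE\to\cI_{Y|X}(3)\to 0.
\]
By Lemma \ref{silly lemma} it remains to verify $h^0(\cE(-1))=h^1(\cE(-1))=h^1(\cE(-2))=h^2(\cE(-2))=0$, which, after twisting the above sequence by $\cO_X(-1)$ and $\cO_X(-2)$ and using the vanishing of intermediate cohomology of $\cO_X(n)$ on the cubic fourfold, reduces to
\[
h^0(\cI_{Y|X}(2))=h^1(\cI_{Y|X}(2))=h^1(\cI_{Y|X}(1))=h^2(\cI_{Y|X}(1))=0.
\]

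This last batch of vanishings is the main obstacle. The first is the no-quadric hypothesis. The statements for $\cI_{Y|X}(1)$ follow from the sequence $0\to\cI_{Y|X}(1)\to\cO_X(1)\to\cO_Y(1)\to 0$ together with the identifications $h^0(\cO_Y(1))=h^0(\omega_Y)=p_g(Y)=6=h^0(\cO_X(1))$ (the restriction being injective by the non-degeneracy of $Y$, itself implied by the no-quadric hypothesis) and $h^1(\cO_Y(1))=h^1(\omega_Y)=q(Y)=0$ via Serre duality on $Y$. For $h^1(\cI_{Y|X}(2))$, the key is $h^0(\cO_Y(2))=21$: Riemann--Roch on $Y$ with $K_Y^2=14$ and $\chi(\cO_Y)=7$ gives $\chi(\cO_Y(2))=21$, while Kodaira vanishing for the ample line bundle $\cO_Y(1)\cong\omega_Y$ yields $h^i(\cO_Y(2))=h^i(K_Y\otimes\omega_Y)=0$ for $i>0$. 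Hence the injective restriction map $H^0(\cO_X(2))\to H^0(\cO_Y(2))$ between $21$-dimensional spaces is an isomorphism, forcing $h^1(\cI_{Y|X}(2))=0$ and completing the verification.
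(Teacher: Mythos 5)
Your proof is correct and follows essentially the same route as the paper: Proposition \ref{pSurfaceY} with $(d,s,k)=(3,1,0)$ for the forward direction, and the Hartshorne--Serre construction plus the reduction of the instanton/Ulrich conditions to vanishings of $h^i(\cI_{Y|X}(t))$ for the converse, with your Riemann--Roch-plus-Kodaira computation of $h^0(\cO_Y(2))=21$ being exactly the content of the paper's citation of $h^0(\omega_Y^{2})=\chi(\cO_Y)+K_Y^2$. The only cosmetic slip is attributing the no-quadric conclusion to \eqref{Quadric}, which is the exceptional case $d+s=3$; here $d+s=4$ and the conclusion follows from the unconditional clause of Proposition \ref{pSurfaceY}(3).
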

\begin{proof}
If $\cE$ exists, then it is globally generated, hence the zero locus $Y$ of a section of $\cE(1)$ is a smooth surface, at least in characteristic zero. If $\bk$ has positive characteristic, we may argue anyway that $Y$ is smooth, for already a section of $\cE$ vanishes on a smooth quintic Del Pezzo surface, so since $h$ is very ample also a section of $\cE(1)$ vanishes on a smooth surface. Thanks to Proposition \ref{pSurfaceY} we know that $Y$ is connected, non-degenerate, linearly normal, $\omega_Y\cong\cO_Y\otimes\cO_{\p5}(1)$, $K_Y^2=14$ and $q(Y)=0$: in particular $\varrho^Y_1$ is an isomorphism, hence $p_g(Y)=6$. 
Moreover, we also know that $\varrho^Y_2$ has maximal rank, again by Proposition \ref{pSurfaceY}. Since
\begin{equation}
  \label{p2=21}
h^0(\cO_{\p5}(2))=h^0(\cO_{X}(2))=h^0(\cO_Y(2))=h^0(\omega_Y^2)=21,
\end{equation}
by \cite[Corollary VII.5.3]{B--H--P--VV}, we get
$h^0(\cI_{Y\vert\p5}(2))=h^0(\cI_{Y\vert X}(2))=0$, i.e. $Y$ is contained in no quadric.

Conversely, assume the existence of a surface as in the statement. Theorem \ref{tSerre} implies the existence of a unique rank two vector bundle $\cA$ fitting into an exact sequence 
$$
0\longrightarrow\cO_X\longrightarrow\cA\longrightarrow\cI_{Y\vert X}(4)\longrightarrow0.
$$
Put $\cE:=\cA(-1)$, so $c_1(\cE)=2h$. Let us check that $\cE$ is
an instanton of charge $0$, keeping in mind Lemma \ref{silly lemma}.
Using
 \eqref{seqChain}, we get
\begin{gather*}
h^p(\cE(-1))=h^p(\cI_{Y\vert X}(2)),\qquad h^q(\cE(-2))=h^q(\cI_{Y\vert X}(1)),
\end{gather*}
for all $p \ge 0$  and $0 \le q \le 2$.
Since $Y$ is contained in no quadric, $h^0(\cI_{Y\vert X}(2))=0$,
which implies $h^0(\cI_{Y\vert X}(1))=0$, so $\varrho^Y_1$ is
injective, hence an isomorphism, as we are assuming $h^0(\cO_Y(1))=h^0(\omega_Y)=p_q(Y)=6$. 
Then we get $h^1(\cI_{Y\vert X}(1))=0$.
Also, using again $h^0(\cI_{Y\vert X}(2))=0$ and \eqref{p2=21}, we see
that $\varrho^Y_2$ is an isomorphism, so $h^1(\cI_{Y\vert X}(2))=0$.
Finally, we have
\[h^2(\cI_{Y\vert X}(1))=h^1(\cO_Y(1))=h^1(\omega_Y)=q(Y)=0.\]
In conclusion, $\cE$ is
an instanton of charge $0$ by Lemma \ref{silly lemma}.
\end{proof}

\begin{remark}
\label{rCatanese}
The existence of surfaces as in the statement is well-known.
Indeed, there is a self-dual resolution
$$
0\longrightarrow\cO_{\p5}(-7)\longrightarrow\cO_{\p5}(-4)^{\oplus7}\longrightarrow\cO_{\p5}(-3)^{\oplus7}\longrightarrow\cI_{Y\vert\p5}\longrightarrow0:
$$
see \cite[Section 4]{Cat}. 
\end{remark}

\begin{remark}
It is interesting to notice that the surfaces in the statement of
Proposition \ref{pEquivalence14} also characterize Pfaffian quartic
hypersurfaces in $\p5$,  see \S \ref{pQuartic0}.
\end{remark}

\subsection{Cubic hypersurfaces in $\cC_{18}$.}

\label{18-section}

Let $X$ be a smooth cubic fourfold. The goal of this subsection is to
establish the logical implications among the following
conditions.

  \begin{enumerate}[label=\roman*)]
  \item \label{18-i} $X$ supports an unobstructed rank-2 instanton of charge 1;
  \item \label{18-ii} $X$ supports a rank-2 instanton of charge 1;
  \item \label{18-iii} $X$ is Steiner-Pfaffian of type $\cO_{\bP^5}^3 \oplus
    \Omega_{\bP^5}(1)$;
  \item \label{18-iv} $X$ is a linear section of a Coble cubic;
\item \label{18-v} $X$ contains a connected non-degenerate LCI
  surface $Y$ with
  \[\omega_Y\simeq \cO_Y(-1), \qquad h^1(\cO_Y)=0, \qquad
    h^0(\cO_Y(1))=7.
    \]
\item \label{18-vi} $X$ contains a smooth connected
surface $Y$, contained in no quadric, with
\[\deg(Y)=15, \quad q(Y)=0, \quad p_g(Y)=6, \quad \omega_Y\cong\cO_Y(1).\]
    \item \label{18-vii} $X$ supports an Ulrich bundle of rank 3;
    \item \label{18-viii} $X$ lies in $\hat{\cC}_{18}$.
    \end{enumerate}

As a matter of terminology, recall that a subvariety of $\bP^n$ is
non-degenerate if
it is contained in no hyperplane. 
The definitions about Pfaffian-Steiner representations of a given
type, instanton and Ulrich bundles have been given in \S
\ref{section:introduction}. We say that a stable bundle is
unobstructed if it is a smooth point of the corresponding moduli
space.
LCI here stands for \textit{locally
  complete intersection}. A \textit{canonical} surface in $\bP^n$ is a
surface $Y$ embedded by the complete linear system $|\omega_Y|$.
Here, a \textit{Coble cubic} is a 
hypersurface $\Gamma$ arising from alternating 3-forms $\omega$ in 9
variables. We refer to \cite{GSW, gs} and to \S \ref{18-proof} for
more details.
Under a suitable generality condition on the form $\omega$, a smooth
hypersurface of this kind appears as the unique cubic hypersurface in $\bP^8$ whose
singular locus is the Jacobian of a given curve of genus two,
naturally associated with $\omega$, embedded by
$|3\Theta|$, according to Coble's work \cite{point-sets-III}.
Our main result here is the following (we assume that $\bk$ has characteristic zero).

\begin{theorem} \label{18-theorem}
  For any smooth cubic fourfold, we have
      \[
        \ref{18-vii}         \Leftarrow        \ref{18-i} \Rightarrow \ref{18-ii} \Leftrightarrow
        \ref{18-iii} \Leftrightarrow    \ref{18-iv} \Leftrightarrow   \ref{18-v} \Leftrightarrow     \ref{18-vi}
      \]
      Any condition implies $\ref{18-viii}$. Conversely, if
      $X$ is generic in  
      $ \hat \cC_{18},$ 
      then all these conditions hold and 
      \[
        \dim (\fU_X(3))=2, \qquad \dim (\fI_X(2,1))=1.
      \]
\end{theorem}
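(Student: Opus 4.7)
The plan is to establish the web of equivalences using three tools: the Pfaffian description of rank-2 instantons from \cite{An--Cs2}, the Hartshorne--Serre correspondence (Example \ref{eSerre} and Theorem \ref{tSerre}), and the deformation theory of acyclic extensions developed in \S \ref{section:ulrich}. I would organize the argument around the cluster of instanton-type conditions \ref{18-ii}--\ref{18-vi}, and treat \ref{18-i}, \ref{18-vii}, \ref{18-viii} and the dimension statements afterward.

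First I would address the chain \ref{18-ii} $\Leftrightarrow$ \ref{18-iii} $\Leftrightarrow$ \ref{18-iv}. The first equivalence is a direct instantiation of \cite{An--Cs2} at $(d,k)=(3,1)$: the Steiner resolution \eqref{steiner} becomes $0 \to \cO_{\bP^5}(-1) \to \cO_{\bP^5}^{\oplus 9} \to \cF \to 0$, and dualizing identifies $\cF^\vee$ with the kernel of a surjection $\cO^{\oplus 9}\to\cO(1)$; after splitting off three trivial summands from the Euler sequence, $\cF^\vee \simeq \cO_{\bP^5}^{\oplus 3} \oplus \Omega_{\bP^5}(1)$. For \ref{18-iii} $\Leftrightarrow$ \ref{18-iv} I would invoke \cite{gs}: a Coble cubic $\Gamma_\omega\subset\bP^8$ is the Pfaffian of $\cT_{\bP^8}(-2)\to\Omega_{\bP^8}(1)$ associated with an alternating $3$-form $\omega$ on $\bC^9$. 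Restricting to a generic linear $\bP^5\subset\bP^8$, the Euler sequence on $\bP^8$ splits to give $\Omega_{\bP^8}(1)|_{\bP^5}\simeq \cO_{\bP^5}^{\oplus 3}\oplus\Omega_{\bP^5}(1)$, so $X$ is Steiner-Pfaffian of the desired type. Conversely, the $9$-dimensional space of global sections on $\cF^\vee$ provides the natural extension of a $\bP^5$ Steiner-Pfaffian representation to a $\bP^8$ Pfaffian attached to an alternating $3$-form.

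Next I would establish \ref{18-ii} $\Leftrightarrow$ \ref{18-v} $\Leftrightarrow$ \ref{18-vi} by Hartshorne--Serre. Given an instanton $\cE$ as in \ref{18-ii}, Proposition \ref{pSurfaceY} with $(d,k,s)=(3,1,0)$ shows that the zero locus of a section of $\cE$ is a connected non-degenerate LCI surface $Y$ with $\omega_Y\simeq\cO_Y(-1)$, $q(Y)=0$, and $h^0(\cO_Y(1))=7$ via \eqref{Normal1}, giving \ref{18-v}. At $s=1$, using that $\cE(1)$ is globally generated by Castelnuovo--Mumford regularity, a general section cuts out a smooth connected surface of degree $15$, not lying in any quadric thanks to \eqref{Quadric}, with $\omega_Y\simeq\cO_Y(1)$, $q(Y)=0$ and $p_g(Y)=6$; this is \ref{18-vi}. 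The reverse implication \ref{18-v}$\Rightarrow$\ref{18-ii} is exactly Example \ref{eSerre} at $(d,k)=(3,1)$, and \ref{18-vi}$\Rightarrow$\ref{18-ii} follows the scheme of Proposition \ref{pEquivalence14}: apply Theorem \ref{tSerre} to produce a rank-2 bundle $\cE$ with $c_1(\cE)=2h$, then check the instanton conditions of Lemma \ref{silly lemma} by computing $H^*(\cI_{Y|X}(t))$ for small $t$ from \eqref{seqStandard}, the hypotheses on $Y$, and the maximal-rank property of $\varrho^Y_t$.

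Finally, I would treat the remaining implications and the dimension count. The implication \ref{18-i}$\Rightarrow$\ref{18-vii} is immediate from Theorem \ref{theorem ulrich}: the acyclic extension $\cG$ of an unobstructed $\cE$ is a rank-3 bundle with the Chern character of an Ulrich, which deforms flatly to a simple Ulrich bundle in $\fU_X(3)$. That any of \ref{18-ii}--\ref{18-vii} implies \ref{18-viii} follows from the discriminant computations in \S \ref{section:Pfaffians}: the class $[Y]=c_2(\cE(k))$ satisfies $h^2\cdot[Y]=15$ and $[Y]^2=81$ by \eqref{Degree} and \eqref{Self}, giving $\delta=18$, while for \ref{18-vii} one uses \eqref{eq:delta} at $r=3$. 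For the converse for generic $X\in\hat\cC_{18}$, I would cite Hassett's description \cite{Has,A--H--T--VA}: such $X$ contains an elliptic sextic scroll, namely a surface as in \ref{18-v}, closing the cycle. For the dimensions, Proposition \ref{list of deltas} at $r=3,\delta=18$ via \eqref{eq:delta} forces $c_2(\cU)^2=54$, whence $m=2+\tfrac{9\cdot 24}{4}-54=2$, so $\dim\fU_X(3)=2$; and \eqref{ExtE} with $r=3$, $Y^2=54$, $k=1$ yields $\dim\Ext^1_X(\cE,\cE)=1$, giving $\dim\fI_X(2,1)=1$, once $\cE$ is known to be unobstructed. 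The main obstacle is precisely this generic unobstructedness: by \eqref{one-d} we know $\dim\Ext^2_X(\cE,\cE)\ge 1$ a priori, so one must verify that for a generic $X\in\hat\cC_{18}$ the moduli space of rank-2 charge-1 instantons has the expected dimension $1$, which I would extract from the acyclic-extension bijection of Lemma \ref{lG} together with the (already established) $2$-dimensionality of $\fU_X(3)$ minus the $1$-dimensional kernel-section data. A secondary technical point is the extension of a $\bP^5$ Steiner-Pfaffian to an alternating $3$-form on $\bC^9$ in step one, requiring a careful dimension or tangent-space argument to ensure the extension yields a smooth Coble cubic.
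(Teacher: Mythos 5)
Your treatment of the equivalences \ref{18-ii} $\Leftrightarrow$ \ref{18-iii} $\Leftrightarrow$ \ref{18-iv} $\Leftrightarrow$ \ref{18-v} $\Leftrightarrow$ \ref{18-vi} and of the implications \ref{18-i} $\Rightarrow$ \ref{18-vii} and ``any condition $\Rightarrow$ \ref{18-viii}'' matches the paper's route (Propositions \ref{pInstanton1} and \ref{pEquivalence15}, the Steiner presentation from \cite{An--Cs2}, Theorem \ref{theorem ulrich}, and the discriminant computation). Two remarks on the Coble step: the paper makes the extension from $\bP^5$ to $\bP^8$ completely explicit via the decomposition $\wedge^3 V_9^\vee = \wedge^3 V_6^\vee \oplus V_3^\vee \otimes \wedge^2 V_6^\vee \oplus \wedge^2 V_3^\vee \otimes V_6^\vee \oplus \wedge^3 V_3^\vee$, identifying $H^0(\wedge^2\cF^\vee(1))$ with the first three summands, so the extension exists with a free parameter $\lambda \in \wedge^3 V_3^\vee$; and your worry about producing a \emph{smooth} Coble cubic is moot, since the paper's working definition of $\Gamma_\omega$ only requires the Pfaffian of $\varphi_\omega$ to cut out an irreducible cubic, which is automatic because its restriction to $\bP(V_6)$ is the equation of $X$.

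The genuine gap is in your last step, the assertion that a generic $X$ in $\hat\cC_{18}$ satisfies \ref{18-i}. You propose to deduce generic unobstructedness of $\cE$ from ``the already established $2$-dimensionality of $\fU_X(3)$,'' but that $2$-dimensionality is only established \emph{conditionally on non-emptiness} (Proposition \ref{list of deltas}), and the only route you give to non-emptiness of $\fU_X(3)$ is \ref{18-i} $\Rightarrow$ \ref{18-vii}, which itself requires unobstructedness. The argument is circular. Moreover, even granting $\dim \fU_X(3)=2$, the acyclic extensions $\cG$ of instantons are \emph{not} Ulrich (they have $\Hom(\cG(-1),\cO_X)\neq 0$), so they sit in a closed boundary stratum of the moduli space of simple sheaves with $\ch(\cG)$, and ``minus the $1$-dimensional kernel-section data'' does not pin down the dimension of that stratum. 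The paper closes this gap by an independent parameter count (Corollary \ref{cCharge1}): the locus $\mathfrak D_6$ of projected sextic Del Pezzo surfaces in $\bP^5$ is smooth irreducible of dimension $39$ with $h^1(\cN_{Y|\bP^5})=0$, each such $Y$ lies on a $\bP^{18}$ of cubics, so the incidence variety has dimension $57$; comparing with $\dim\hat\cC_{18}=54$ and the $2$-dimensional choice of section forces $\dim\fI_X(2;1)=1$, generically reduced, whence $\dim\Ext^1_X(\cE,\cE)=1$ and, by Riemann--Roch, $\dim\Ext^2_X(\cE,\cE)=1$, i.e.\ unobstructedness. (An alternative, noted in the paper, is the direct \texttt{Macaulay2} check $h^1(\cN_{Y|X})=1$ combined with Lemma \ref{normale-ext}.) Without some such independent input your proof of the ``conversely'' clause and of both dimension formulas does not close.
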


The proof of the above theorem will occupy the rest of the subsection. The proof will
be summarized in \S \ref{18-proof}.
This will prove a stronger version of Theorem \ref{C18-intro} from the introduction. Note that, in Theorem \ref{C18-intro}, it is mentioned that the Ulrich bundles $\cU$ under consideration are stable. However, this is obvious if $X$ represents a point of $\cC_{18} \setminus \cC_{14}$, since $\cU$ can only be destabilized by an Ulrich bundle of rank $2$, which would force $X$ to lie in $\hat \cC_{14}$.

\subsubsection{Rank-two instantons of charge one}

Let us focus on the case of rank two instanton bundles with charge $\quantum=1$. 
First we consider the general isomorphic projection in $\p5$  of a smooth Del Pezzo surface of degree $6$ in $\p6$. Thanks to \cite[Proposition 2.1]{KapG} we know that there is a smooth cubic hypersurface $X\subseteq\p5$ containing it. 

\begin{proposition} \label{pInstanton1}
  A smooth cubic fourfold $X$ supports a rank-2 instanton bundle of
  charge 1 if and only if $X$ contains a connected non-degenerate LCI
  surface $Y$ with
  \[\omega_Y\simeq \cO_Y(-1), \qquad h^1(\cO_Y)=0, \qquad
    h^0(\cO_Y(1))=7.
    \]
\end{proposition}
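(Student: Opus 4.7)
The plan is to prove the two implications separately, leveraging Proposition \ref{pSurfaceY} for the forward direction and the Hartshorne--Serre correspondence (Theorem \ref{tSerre}) for the backward direction. The numerical point is that, with $d=3$ and charge $k=1$, the correct twist is $s=0$: it gives $d+s=3$, so the formulas of Proposition \ref{pSurfaceY} yield $\omega_Y \simeq \cO_Y(2d+2s-7)=\cO_Y(-1)$ and, because $d+s=3$ and $k\ge 1$, the exceptional equation \eqref{Normal1} produces $h^0(\cO_Y(1))=6+1=7$.

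For the ``only if'' direction, given a rank-$2$ instanton bundle $\cE$ of charge $1$, I first check that $h^0(\cE)>0$. The cohomology table at the end of \S \ref{section:Pfaffians} shows $h^i(\cE)=0$ for $i\ge 2$, so combining with \eqref{ChiE} (with $n=4$, $r=2$, $k=1$ and $h^4=3$) one gets $h^0(\cE)-h^1(\cE)=\chi(\cE)=3$. In particular $s_0=0$ and, by Remark \ref{rFirstSection}, any section $\sigma\in H^0(\cE)$ cuts out a subscheme $Y$ of pure dimension $2$. Since $\Pic(X)=\bZ h$ and $h^0(\cE(-1))=0$, the divisorial part $S$ in the decomposition \eqref{seqSerre} vanishes, so $Y$ is LCI in $X$. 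All remaining properties of $Y$ then drop out of items (1), (2) and (4) of Proposition \ref{pSurfaceY}.

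For the ``if'' direction, starting from such a $Y$, adjunction together with $\omega_X\simeq \cO_X(-3)$ and $\omega_Y\simeq \cO_Y(-1)$ give $\det(\cN_{Y\vert X})\simeq \cO_Y(2)$. A short calculation from $0\to \cO_{\bP^5}(-5)\to \cO_{\bP^5}(-2)\to \cO_X(-2)\to 0$ shows $H^*(\cO_X(-2))=0$, so Theorem \ref{tSerre} applies with $\mathcal L=\cO_X(2)$ and produces a unique rank-$2$ vector bundle $\cE$ with $c_1(\cE)=2h$ sitting in
\[
0\longrightarrow \cO_X\longrightarrow \cE\longrightarrow \cI_{Y\vert X}(2)\longrightarrow 0.
\]
To certify that $\cE$ is an instanton of charge $1$, I appeal to Lemma \ref{silly lemma}: twisting this Koszul sequence by $-1$ and $-2$ and using $H^*(\cO_X(-j))=0$ for $j=1,2$ reduces the four conditions to cohomological statements about $\cI_{Y\vert X}(1)$ and $\cI_{Y\vert X}$. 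From \eqref{seqStandard} and the hypotheses $h^0(\cO_Y)=1$ and $h^1(\cO_Y)=0$ one reads $h^0(\cI_{Y\vert X})=h^1(\cI_{Y\vert X})=h^2(\cI_{Y\vert X})=0$; twisting by $1$, non-degeneracy of $Y$ forces the injection $H^0(\cO_X(1))\hookrightarrow H^0(\cO_Y(1))$, and $h^0(\cO_Y(1))=7$ yields $h^0(\cI_{Y\vert X}(1))=0$ and $h^1(\cI_{Y\vert X}(1))=7-6=1$, which is exactly what Lemma \ref{silly lemma} demands.

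There is no substantial obstacle: the proof is an instance of the general Hartshorne--Serre/Proposition \ref{pSurfaceY} duality already developed in the paper, and the only delicate input is identifying the right twist $s=0$ on the instanton side and the right determinant $\mathcal L=\cO_X(2)$ on the Serre side. The one place that requires some care is excluding a divisorial component of $(\sigma)_0$ in the forward direction, which follows from $\Pic(X)=\bZ h$ and $h^0(\cE(-1))=0$.
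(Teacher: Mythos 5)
Your proof is correct and follows essentially the same route as the paper's: the forward direction combines Remark \ref{rFirstSection}, Proposition \ref{pSurfaceY} (with $d=3$, $s=0$) and \eqref{Normal1}, and the converse runs the Hartshorne--Serre construction through \eqref{seqIdeal} and verifies the instanton conditions on $\cI_{Y\vert X}$ and $\cI_{Y\vert X}(1)$, exactly as in the paper (which performs the Serre-duality step directly rather than citing Lemma \ref{silly lemma}). The only cosmetic difference is that you deduce $h^0(\cE)>0$ from $\chi(\cE)=3$ and the vanishing of higher cohomology, where the paper uses regularity of $\cE(1)$ to get $h^0(\cE)=3$ exactly; both are fine.
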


\begin{proof}
  Assume that $X$ supports a rank-2 instanton bundle $\cE$ of
  charge 1. Then $\cE(1)$ is regular, so $h^1(\cE)=0$.
  Hence \eqref{ChiE} with $t=0$ yields $h^0(\cE)=3$ and we know from
  Remark \ref{rFirstSection}, Proposition \ref{pSurfaceY} with $d=3$
  and $s=0$ and  \eqref{Normal1} that $X$ contains a surface $Y$ as
  desired.

  Conversely, assume that $X$ contains a connected non-degenerate LCI
  surface $Y$ with $h^1(\cO_Y)=0$ and $h^0(\cO_Y(1))=7$. Then
  $h^p(\cI_{Y|X})=0$ for $p=0,1,2$ and $h^0(\cI_{Y|X}(1))=0$, while
  $h^1(\cI_{Y|X}(1))=1$.
  Thus Theorem \ref{tSerre} implies the existence of a unique rank two
  vector bundle $\cE$ with $c_1(\cE)=2h$, fitting into
  \eqref{seqIdeal} with $d=3$ and $s=0$.
  Using the sequence \eqref{seqIdeal}, we compute:
  \begin{align*}
    h^0(\cE(-1)) & = h^0(\cI_{Y|X}(1))=0, \\
    h^1(\cE(-1)) & =  h^1(\cI_{Y|X}(1))= 1,\\
    h^k(\cE(-2)) & =  h^k(\cI_{Y|X})= 0, && \forall k \in \{0,1,2\}.
  \end{align*}
  Therefore, Serre duality gives
  \[
    h^p(\cE(-3))=h^{4-p}(\cE^\vee)=h^{4-p}(\cE(-2))=0, \qquad \forall
    p \in \{2,3,4\}.
  \]
  In conclusion, $\cE$ is an instanton bundle of charge $1$.
\end{proof}

\begin{remark}
Though the zero-locus $Y\subseteq X$ of a section of $\cE$ is a
locally complete intersection subscheme of pure dimension $2$ we are
unable to prove that $Y$ is smooth or even integral.
However, if $Y$ is integral and normal, then $Y$ is a Del Pezzo
surface of degree 6, anticanonically embedded and projected to $\bP^5$ from a point outside $Y$.
\end{remark}

\begin{versionb}
  \begin{proposition}
\label{pInstanton1}
The following assertions hold.

\begin{enumerate}
\item If $X$ supports a rank two instanton bundle $\cE$ with charge $\quantum=1$, then $h^0(\cE)=3$, $h^1(\cE)=0$ and the general section of $H^0(\cE)$ vanishes on a connected, non-degenerate, locally complete intersection surface $Y$ such that $\deg(Y)=6$, $\omega_Y\cong\cO_Y(-1)$ and $h^0(\cO_Y(1))=7$.
\item If $X$ contains the isomorphic projection $Y\subseteq\p5$  of a smooth Del Pezzo surface of degree $6$ in $\p6$, then $X$ supports a rank two instanton bundle $\cE$ with charge $\quantum=1$ and the general section of $H^0(\cE)$ vanishes a surface with the same properties as $Y$.
\end{enumerate}
\end{proposition}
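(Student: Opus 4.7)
The plan is to establish the two implications by combining the cohomological data coming from the instanton condition with the Hartshorne--Serre correspondence of Theorem \ref{tSerre}, using Proposition \ref{pSurfaceY} to read off the numerical invariants of the associated surface.

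For part (1), I would start by noting that a rank-two instanton $\cE$ of charge $\quantum=1$ on the cubic fourfold $X$ satisfies $c_1(\cE)=2h$, so formula \eqref{ChiE} with $n=4$, $d=3$, $r=2$, $k=1$, $t=0$ yields $\chi(\cE)=3$. Since $\cE(\quantum h)=\cE(1)$ is Castelnuovo--Mumford regular (by the remark following \eqref{ChiE}), in particular $h^{1}(\cE)=0$, so $h^{0}(\cE)=3$. Taking $s=s_{0}=0$ in Remark \ref{rFirstSection}, a general section of $H^{0}(\cE)$ vanishes on a locally complete intersection subscheme $Y\subseteq X$ of pure dimension two. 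Then Proposition \ref{pSurfaceY} applied with $d=3$, $s=0$, $k=1$ gives immediately that $Y$ is connected, non-degenerate, with $q(Y)=0$, $\omega_{Y}\cong\cO_{Y}(2d+2s-7)=\cO_{Y}(-1)$, $\deg(Y)=6$ from \eqref{Degree}, and $h^{0}(\cO_{Y}(1))=6+\quantum=7$ from \eqref{Normal1} (which applies precisely in the boundary case $d+s=3$).

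For part (2), the starting point is to apply Theorem \ref{tSerre} to the projected Del Pezzo surface $Y$ inside $X$. By adjunction on $X$ we have $\det(\cN_{Y\mid X})\cong\omega_{Y}\otimes\omega_{X}^{\vee}\mid_{Y}\cong\cO_{Y}(-1+3)=\cO_{Y}(2)$, so with $\mathcal L=\cO_{X}(2)$ and noting that $h^{1}(\cO_{X}(-2))=h^{2}(\cO_{X}(-2))=0$, Theorem \ref{tSerre} produces a unique rank-two bundle $\cE$ on $X$ with $c_{1}(\cE)=2h$ fitting into the Hartshorne--Serre sequence
\[
0\longrightarrow\cO_{X}\longrightarrow\cE\longrightarrow\cI_{Y\mid X}(2)\longrightarrow0.
\]
By Lemma \ref{silly lemma}, it then suffices to verify $h^{0}(\cE(-1))=0$, $h^{1}(\cE(-1))=1$, and $h^{1}(\cE(-2))=h^{2}(\cE(-2))=0$. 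Twisting the sequence above by $\cO_{X}(-1)$ and $\cO_{X}(-2)$ and taking cohomology, these identities reduce to computing $h^{\ast}(\cI_{Y\mid X}(t))$ for $t=0,1$. For $t=0$ I would use \eqref{seqStandard} together with $h^{1}(\cO_{Y})=0$ to get $h^{p}(\cI_{Y\mid X})=0$ for $p\le 2$. For $t=1$ I would use non-degeneracy of $Y$ combined with \eqref{seqChain} (comparing $\cI_{Y\mid\bP^{5}}$ and $\cI_{Y\mid X}$) to get $h^{0}(\cI_{Y\mid X}(1))=0$, and then the fact that $h^{0}(\cO_{Y}(1))=7$ while $h^{0}(\cO_{X}(1))=6$ gives $h^{1}(\cI_{Y\mid X}(1))=1$, as required. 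Once $\cE$ is shown to be an instanton of charge~$1$, the final assertion about the zero-locus of a general section of $H^{0}(\cE)$ follows from part (1).

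The main obstacle is really part (2): checking the precise cohomological vanishings from the Hartshorne--Serre sequence, and in particular confirming that the dimension $h^{0}(\cO_{Y}(1))=7$ imposed by the isomorphic projection is exactly what is needed to get $h^{1}(\cE(-1))=1$ (and not $0$). The remaining subtle point, on which I would comment but not fully resolve, is that $Y$ is only guaranteed to be locally complete intersection of pure dimension two in part (1); smoothness or integrality of this $Y$ would require further input (for instance $\cE(1)$ being globally generated, which here corresponds to $s_{1}\le 1$), and is not claimed by the proposition.
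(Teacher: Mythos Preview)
Your proposal is correct and follows essentially the same approach as the paper: part (1) via regularity of $\cE(1)$ together with Proposition \ref{pSurfaceY}, and part (2) via Hartshorne--Serre and the cohomological vanishings of $\cI_{Y\mid X}(t)$ for $t\in\{0,1\}$. The only minor difference is that you invoke Lemma \ref{silly lemma} to package the Serre-duality step, whereas the paper verifies the extra vanishing $h^2(\cE(-3))=h^2(\cI_{Y\mid X}(-1))=h^1(\cO_Y(-1))=0$ directly via Kodaira vanishing before appealing to \cite[Proposition 6.7]{An--Cs1}; your shortcut is equally valid and slightly cleaner.
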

\begin{proof}
Recall that $\cE(1)$ is regular, hence  $h^1(\cE)=0$. Thus \eqref{ChiE} with $t=0$ yields $h^0(\cE)=3$ and we know from Remark \ref{rFirstSection}, Proposition \ref{pSurfaceY} with $d=3$ and $s=0$ and  \eqref{Normal1} that $Y$ exists and it is as in assertion (1) above.

We prove assertion (2) below. Since $\omega_Y\cong\cO_Y(-1)$, it
follows that $\det(\cN_{Y\vert X})\cong\cO_Y(2)$.
Thus Theorem \ref{tSerre} implies the existence of a unique rank two vector bundle $\cE$ with $c_1(\cE)=2h$, fitting into \eqref{seqIdeal} with $d=3$ and $s=0$. Thus,
$$
h^i(\cE(-(i+1))=h^i(\cI_{Y\vert X}(1-i),\qquad h^1(\cE(-1)=h^1(\cI_{Y\vert X}(1)),
$$
where $0\le i\le 2$. The cohomologies of the twists of \eqref{seqChain} with $S:=Y$, $Z:=X$ and $P:=\p5$ and \eqref{seqStandard} with $X:=\p5$ imply $h^2(\cI_{Y\vert X}(-1))=h^1(\cO_Y(-1))=0$, thanks to the Kodaira vanishing theorem, 
$h^0(\cI_{Y\vert X}(1))=h^0(\cI_{Y\vert\p5}(1))=0$, because $Y$ is non-degenerate by construction,
$$
h^1(\cI_{Y\vert X}(1))=h^1(\cI_{Y\vert\p5}(1))=h^0(\cO_Y(1))-h^0(\cO_{\p5}(1))=1,
$$
because $Y$ is the projection of a non-degenerate and linearly normal surface in $\p6$, and
$$
h^1(\cI_{Y\vert X})=h^1(\cI_{Y\vert\p5})=h^0(\cO_Y)-h^0(\cO_{\p5})+h^0(\cI_{Y\vert X})=0.
$$
Thus, $h^0(\cE(-1))=h^1(\cE(-2))=h^2(\cE(-2))=0$ and $h^1(\cE(-1))=1$, hence $\cE$ is a rank two $h$-instanton bundle on $X$ with charge $\quantum=1$, thanks to \cite[Proposition 6.7]{An--Cs1}.

Arguing as in the proof of assertion (1) we know that the general section of $H^0(\cE)$ vanishes on a connected, non-degenerate, locally complete intersection surface $Y'$ such that $\deg(Y')=6$ and $\omega_{Y'}\cong\cO_{Y'}\otimes\cO_{\p5}(-1)$. Since $Y$ is smooth by definition, the same is true for $Y'$, hence $Y'\subseteq\p5$ is the  isomorphic projection of a smooth Del Pezzo surface of degree $6$ in $\p6$. Thus also assertion (2) is proved.
\end{proof}
\end{versionb}

In the following corollary we deal with the moduli space of instanton with charge $\quantum=1$.

\begin{corollary}
\label{cCharge1}
Let $X\subseteq\p5$ be a general cubic hypersurface of
$\hat \cC_{18}$. Then
\[\dim(\fI_X(2;1))=1.\]
\end{corollary}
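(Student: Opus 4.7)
The approach is to establish non-emptiness of $\fI_X(2;1)$ and then apply the unobstructedness framework of \S\ref{section:ulrich}. For $X$ generic in $\hat\cC_{18}$, by the opening of \S\ref{section:ulrich}, $X$ contains a surface $Y$ which is an isomorphic projection into $\bP^5$ of a smooth sextic Del Pezzo of $\bP^6$. Applying Example \ref{eSerre} with $d=3$ and $\quantum=1$ yields a rank-$2$ instanton bundle $\cE$ of charge $1$ on $X$, which is simple by Lemma \ref{pStable}; thus $\fI_X(2;1)\ne\emptyset$.

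Next I compute the deformation theory of $\cE$ through its acyclic extension $\cG$ (Lemma \ref{lG}), defined by $0\to\cE\to\cG\to\cO_X(1)\to 0$. Since $\cG(-1)\in\Ku(X)$ is simple, Proposition \ref{list of deltas} and the K3 property of $\Ku(X)$ give $\dim\Ext^1_X(\cG,\cG)=\dim\fU_X(3)=2$, $\Ext^2_X(\cG,\cG)\cong\bk$, and $\Ext^{\ge 3}_X(\cG,\cG)=0$. Applying $\Hom_X(-,\cG)$ to the extension and using $H^p(\cG(-1))=0$ produces the isomorphisms $\Ext^p_X(\cE,\cG)\simeq\Ext^p_X(\cG,\cG)$, while applying $\Hom_X(\cE,-)$ and invoking Lemma \ref{silly lemma} for $H^p(\cE(-1))$ yields the four-term exact sequence
\[
0 \to \Ext^1_X(\cE,\cE) \to \bk^2 \to \bk \to \Ext^2_X(\cE,\cE) \to \bk \to 0.
\]
The alternating sum of dimensions forces $a:=\dim\Ext^1_X(\cE,\cE)=\dim\Ext^2_X(\cE,\cE)\in\{1,2\}$, the two values corresponding to surjectivity or vanishing of the middle arrow.

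To rule out $a=2$, I pursue two compatible routes. The direct route uses Lemma \ref{normale-ext}: via $\cN_{Y|X}\cong\cE|_Y$ together with the Koszul resolution of $\cO_Y$ by twists of $\cE^\vee$, the instanton vanishings of Definition \ref{dInstanton}, and the cohomology of $\cO_Y$ imposed by $\omega_Y\cong\cO_Y(-1)$, $h^1(\cO_Y)=0$ and $h^0(\cO_Y(1))=7$, one extracts $h^1(\cN_{Y|X})=1$. As a parallel check, the map $\cE\mapsto\cG$ is injective, since the defining sequence of $\cG$ and the vanishing $H^0(\cE(-1))=0$ force $\Hom_X(\cG,\cO_X(1))\cong\bk$, so $\cE$ is the kernel of the essentially unique surjection $\cG\to\cO_X(1)$. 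Its image lies in the upper-semicontinuously closed subset $Z$ of the smooth $2$-dimensional moduli component of simple sheaves with Chern character $\ch(\cG)$ (supplied by Proposition \ref{list of deltas}) characterized by $\Hom(-,\cO_X(1))\ne 0$, and $Z$ is proper in the ambient moduli by Proposition \ref{deforms to Ulrich} together with the Serre-dual vanishing $\Hom(\cU,\cO_X(1))=0$ for Ulrich $\cU$; hence $\dim\fI_X(2;1)\le 1$, which is incompatible with $a=2$ at the generic point.

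Either way, $\cE$ is unobstructed, and Theorem \ref{unobstructed} yields smoothness of $\fI_X(2;1)$ of dimension $a=1$ at $[\cE]$. Upper semicontinuity of $\dim\Ext^2_X$ then propagates unobstructedness to a dense open subset of $\fI_X(2;1)$, so that every component of the moduli meeting it is purely one-dimensional. The main obstacle is the concrete cohomological computation $h^1(\cN_{Y|X})=1$; the acyclic-extension upper bound provides a conceptual backup that sidesteps it, at the cost of an additional argument promoting the bound on moduli dimension to a bound on the tangent space.
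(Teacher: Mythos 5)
Your proposal takes a genuinely different route from the paper, but it has a gap at its central step. The paper proves Corollary \ref{cCharge1} by a global parameter count: it shows the family $\mathfrak D_6$ of projected sextic Del Pezzo surfaces in $\bP^5$ is irreducible and smooth of dimension $39$ (via $h^0(\cN_{Y|\bP^5})=39$, $h^{1}=h^2=0$), that the incidence variety $\{(Y,X):Y\subseteq X\}$ has dimension $57$ (since $h^0(\cI_{Y|\bP^5}(3))=19$), and that its fibre over $X$ has dimension $\dim(\fI_X(2;1))+2$ (since $h^0(\cE)=3$); comparing with $\dim(\hat\cC_{18})=54$ and using density of the image gives $\dim(\fI_X(2;1))=1$ directly, \emph{without} first knowing $\dim\Ext^1_X(\cE,\cE)$. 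Indeed, in \S\ref{18-proof} the paper deduces unobstructedness \emph{from} this dimension count, not the other way around. Your argument inverts this logic and therefore needs an independent proof that $a:=\dim\Ext^2_X(\cE,\cE)=1$, which neither of your two routes supplies.

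Concretely: your setup through the five-term sequence $0\to\Ext^1_X(\cE,\cE)\to\bk^2\to\bk\to\Ext^2_X(\cE,\cE)\to\bk\to0$ and the conclusion $a\in\{1,2\}$ is correct, but ruling out $a=2$ fails on both routes. For Route 1, the computation $h^1(\cN_{Y|X})=1$ is asserted, not performed, and it cannot be ``extracted'' from the listed data: chasing \eqref{seqIdeal} and \eqref{seqStandard} exactly as in Lemma \ref{normale-ext} shows $h^0(\cN_{Y|X})=2+\dim\Ext^1_X(\cE,\cE)$ and $\chi(\cN_{Y|X})=2$ with $h^2(\cN_{Y|X})=0$, so $h^1(\cN_{Y|X})=\dim\Ext^1_X(\cE,\cE)=a$ identically --- the computation is circular, which is precisely why the paper resorts to \texttt{Macaulay2} for this number (and only in a remark offering an alternative proof). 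For Route 2, the properness of the locus $Z$ of non-Ulrich deformations of $\cG$ is obtained by citing Proposition \ref{deforms to Ulrich}, whose hypothesis is that $\cE$ is unobstructed --- again circular. And even granting $\dim\fI_X(2;1)\le1$, this is compatible with $a=2$: an obstructed point may well lie on a moduli space of dimension $0$ or $1$, since one only knows $\dim_{[\cE]}\ge\dim\Ext^1-\dim\Ext^2=0$. You acknowledge that promoting the moduli bound to a tangent-space bound requires ``an additional argument,'' but that argument is the whole difficulty; moreover your lower bound $\dim\fI_X(2;1)\ge1$ also rests on unobstructedness, so without excluding $a=2$ the proof establishes neither inequality. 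To repair the argument you would need either the paper's incidence count or an explicit (e.g.\ computer-aided) verification of $h^1(\cN_{Y|X})=1$ for one concrete pair $(Y,X)$, followed by semicontinuity.
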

\begin{proof}
The smooth Del Pezzo surfaces of degree $6$ in $\p6$ correspond to points in $\Hilb^{3t^2+3t+1}(\p6)$ by \cite[Exercise V.1.2]{Ha2}. Each such a Del Pezzo surface is abstractly isomorphic to the blow up $D$ of $\p2$ at the three fundamental points. Thus the aforementioned locus  is dominated by $\PGL_7$, hence irreducible. In order to give a smooth Del Pezzo surface $Y\subseteq\p5$ of degree $6$ we must project a smooth Del Pezzo surface of degree $6$ from a point in $\p6$ not lying on its secant variety, hence the locus ${\mathfrak D}_6\subseteq\Hilb^{3t^2+3t+1}(\p5)$ corresponding to such surfaces is dominated by an open  subscheme of $\PGL_7\times\p6$, hence it is irreducible as well.

Thanks to the Nakano vanishing theorem (see \cite[Corollary 6.4]{Es--Vie}) we know that $h^2(\cT_Y)=0$. Moreover, $Y$ is $\p2$ blown up at $3$ general points, hence $\Aut(Y)$ is the subgroup of the group of projectivities of $\p2$ fixing the blown up points: it follows that $h^0(\cT_Y)=\dim(\Aut(Y))=2$ (see \cite[Exercise I.2.16.4]{Kol}). The Riemann--Roch theorem on $Y$ then returns $h^1(\cT_Y)=0$. Finally the Kodaira vanishing theorem on $Y$, the cohomology of \eqref{seqNormal} with $Z:=Y$ and $P:=\p 5$ and the one of \eqref{seqTangentP5} restricted to $Y$ yield
$$
h^0(\cN_{Y\vert\p5})=39,\qquad h^1(\cN_{Y\vert\p5})=h^2(\cN_{Y\vert\p5})=0.
$$
Thus $\mathfrak D_6$ is smooth and its dimension is $39$. Let
$$
\mathfrak  J:=\{\ (Y,X)\ \vert\ Y\subseteq X\ \}\subseteq\mathfrak D_6\times \mathfrak X_3.
$$
There is an obvious projection $\mathfrak  d_6\colon\mathfrak J \to\mathfrak D_6$ whose fibre over $Y$ is the projective space of cubic hypersurfaces containing $Y$. Thanks to \cite[Proposition 4.1]{Ah--Kw} we know that
$$
h^0(\cI_{Y\vert\p5}(3))=h^0(\cI_{D\vert\p6}(3))-{8\choose6}.
$$
Since $\cI_{D\vert\p6}$ has a sheafified minimal free resolution of the form
$$
0\longrightarrow\cO_{\p6}(-6)\longrightarrow\cO_{\p6}(-4)^{\oplus9}\longrightarrow\cO_{\p6}(-3)^{\oplus16}\longrightarrow\cO_{\p6}(-2)^{\oplus9}\longrightarrow\cI_{D\vert\p6}\longrightarrow0
$$
we easily obtain $h^0(\cI_{Y\vert\p5}(3))=47-28=19$, hence the fibre of $\mathfrak d_6$ is $\p{18}$ and $\mathfrak D_6$ is smooth and irreducible of dimension $57$.

We also have a second projection $\mathfrak x\colon\mathfrak J\to \mathfrak X_3$ whose fiber over $X$ parameterizes smooth Del Pezzo surfaces $Y$ of degree $6$ inside $X$: in particular $\im(\mathfrak x)\subseteq\hat{\cC}_{18}$ by Proposition \ref{pInstanton1}. Such surfaces $Y$ are parameterized by pairs $(\cE,\sigma)$ where $\cE$ corresponds to a point in $\fI_X(2;1)$ and  $\sigma\in \bP(H^0(\cE))$. 

On the one hand, we found in Proposition \ref{pEquivalence15} that  $h^0(\cE)=3$, hence the fibre of $\mathfrak x$ over $X\in\im(\mathfrak x)$ has dimension $\dim(\fI_X(2;1))+2$. We deduce that
$$
54=\dim(\hat{\cC}_{18})\ge\dim(\im(\mathfrak x))=55-\dim(\fI_X(2;1))
$$
hence $\dim(\fI_X(2;1))\ge1$. Moreover, the general hypersurface corresponding to a point in $\cC_{18}$ contains a smooth Del Pezzo surface $Y$ of degree $6$ thanks to \cite[Proof of Theorem 6]{A--H--T--VA}, hence $\im(\mathfrak x)$ is dense inside $\hat{\cC}_{18}$. It follows that $\dim(\fI_X(2;1))=1$ generically.
\end{proof}

In the following statement we characterize smooth cubic hypersurfaces $X\subseteq\p5$ supporting a rank two instanton with $\quantum=1$ in terms of a class of surface that they contain.

\begin{proposition}
\label{pEquivalence15}
The fourfold $X$ supports a rank two instanton bundle $\cE$ with
charge $\quantum=1$ if and only if there is a smooth, connected
surface $Y\subseteq X$ not contained in any quadric and such that
\[\deg(Y)=15, \quad q(Y)=0, \quad p_g(Y)=6, \quad \omega_Y\cong\cO_Y(1).\]
\end{proposition}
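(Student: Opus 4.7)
The plan is to mimic the strategy of Proposition \ref{pEquivalence14}, replacing the shift $s=0$ by $s=1$, so that the Hartshorne--Serre construction (Theorem \ref{tSerre}) and the cohomological characterization of Lemma \ref{silly lemma} exchange rank-2 instantons of charge $1$ on $X$ with surfaces of the stated type.

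For the forward implication, assume that $\cE$ is a rank-2 instanton of charge $1$. Since $k=1$, the twist $\cE(1)=\cE(k)$ is Castelnuovo--Mumford regular, hence globally generated. A sufficiently general section $\sigma \in H^0(\cE(1))$ therefore has smooth zero locus $Y$ of pure dimension $2$, to which Proposition \ref{pSurfaceY} with $d=3$, $s=1$ applies. Formulas \eqref{Degree} and \eqref{Omega} give directly $\deg(Y)=15$ and $\omega_Y \cong \cO_Y(1)$; part (2) supplies $q(Y)=0$, connectedness, non-degeneracy and linear normality (since $d+s=4 \ge 4$), whence $p_g(Y) = h^0(\omega_Y) = h^0(\cO_Y(1)) = h^0(\cO_{\bP^5}(1)) = 6$; and part (3) gives at once that $Y$ lies in no quadric because $d+s = 4 \ne 3$.

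For the converse, given such a $Y$, adjunction together with $\omega_X \cong \cO_X(-3)$ yields $\det(\cN_{Y\vert X}) \cong \omega_Y \otimes \cO_Y(3) \cong \cO_Y(4)$. Taking $\mathcal L = \cO_X(4)$, the vanishings $h^1(\cO_X(-4))=h^2(\cO_X(-4))=0$ allow us to apply Theorem \ref{tSerre} and produce a unique rank-2 bundle $\cA$ with $\det(\cA)\cong \cO_X(4)$ having a section whose zero locus is $Y$, sitting in
\[
0 \to \cO_X \to \cA \to \cI_{Y\vert X}(4) \to 0.
\]
Set $\cE := \cA(-1)$, so that $c_1(\cE)=2h$. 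By Lemma \ref{silly lemma}, it only remains to check $h^0(\cE(-1))=0$, $h^1(\cE(-1))=1$ and $h^1(\cE(-2))=h^2(\cE(-2))=0$. Twisting the displayed sequence by $\cO_X(-2)$ and $\cO_X(-3)$, and using that $h^p(\cO_X(-2))=h^p(\cO_X(-3))=0$ for $p\le 3$, these reduce to the cohomological statements $h^0(\cI_{Y\vert X}(2))=0$, $h^1(\cI_{Y\vert X}(2))=1$, $h^1(\cI_{Y\vert X}(1))=h^2(\cI_{Y\vert X}(1))=0$.

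The first vanishing is the quadric hypothesis. For the remaining ones, I would feed the sequence \eqref{seqStandard} into cohomology and compare with the hypotheses on $Y$: the map $\varrho^Y_1$ between two spaces of dimension $6$ is an isomorphism because $Y$ is non-degenerate and $p_g(Y)=h^0(\cO_Y(1))=6$, which gives $h^1(\cI_{Y\vert X}(1))=0$; and $h^2(\cI_{Y\vert X}(1))=h^1(\omega_Y)=q(Y)=0$. The subtlest ingredient is $h^1(\cI_{Y\vert X}(2))=1$, and this is the point where I would expect to pause most carefully. Using Riemann--Roch on $Y$ together with $\omega_Y\cong\cO_Y(1)$, $K_Y^2=\deg(Y)=15$ and $\chi(\cO_Y)=1-q(Y)+p_g(Y)=7$, one computes $\chi(\cO_Y(2))=7+15=22$; Kodaira vanishing applied to the ample line bundle $\cO_Y(1)$ gives $h^i(\cO_Y(2))=h^i(\omega_Y\otimes\cO_Y(1))=0$ for $i>0$, so $h^0(\cO_Y(2))=22$. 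Since $h^0(\cO_X(2))=21$ and $h^1(\cO_X(2))=0$, the long exact sequence of \eqref{seqStandard} twisted by $\cO_X(2)$ forces $h^1(\cI_{Y\vert X}(2))=1$. This completes the verification and yields a rank-2 instanton bundle on $X$ of charge $1$.
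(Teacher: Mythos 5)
Your argument is correct and follows essentially the same route as the paper, which simply reruns the proof of Proposition \ref{pEquivalence14} with $s=1$ via Theorem \ref{tSerre}, Proposition \ref{pSurfaceY} and Lemma \ref{silly lemma}. The only (harmless) divergence is at the end: the paper reads off the charge from the degree formula \eqref{Chern}, i.e.\ $c_2(\cE(1))h^2=\deg(Y)=15$ forces $k=1$, whereas you compute $h^1(\cI_{Y\vert X}(2))=1$ directly from $h^0(\cO_Y(2))=\chi(\cO_Y)+K_Y^2=22$ against $h^0(\cO_X(2))=21$ --- both are valid and give the same conclusion.
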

\begin{proof}
If $\cE$ exists, then $\cE(1)$ is globally generated, hence the zero locus $Y$ of its general section is a smooth surface. The same argument used in the proof of Proposition \ref{pEquivalence14} implies that $Y$ has the desired properties.

Conversely, if $Y$ exists, then Theorem \ref{tSerre} implies the existence of a unique rank two vector bundle $\cA$ fitting into an exact sequence 
$$
0\longrightarrow\cO_X\longrightarrow\cA\longrightarrow\cI_{Y\vert X}(4h)\longrightarrow0.
$$
If $\cE:=\cA(-1)$, then $c_1(\cE)=2h$. Arguing as in the proof of Proposition \ref{pEquivalence14} we deduce that $\cE$ is a rank two instanton bundle on $X$. To compute its charge $k$, we can use \eqref{Chern}, that gives
 $c_2(\cE(1))h^2=\deg(Y)=15$, hence $\quantum=1$. 
\end{proof}

\subsubsection{Proof of Theorem \ref{18-theorem}} \label{18-proof}

The proof is scattered along the previous sections. Let us summarize
it here. We refere to the conditions \ref{18-i} to \ref{18-viii} of \S \ref{18-section}.
Note that \ref{18-i} obviously implies \ref{18-ii}. 

\bigskip

\paragraph{\ref{18-ii} \textit{is equivalent to} \ref{18-iii}} It is proved in
\cite{An--Cs2} that $X$ supports a rank-2 instanton of charge $k$ if
and only if $X$ is Steiner-Pfaffian of type type $\cF^\vee$, where,
according to \eqref{steiner}, $\cF$ is a vector bundle fitting into
\[
  0 \to \cO_{\bP^5}(-1) \xrightarrow{\vartheta} \cO_{\bP^5}^9 \to \cF \to 0.
\]

Since $\cF$ is locally free of rank 8, the presentation matrix $\vartheta$ must have
constant rank 1, so it must contain $6$ independent linear forms. The
remaining forms will depend linearly on these, so up to a suitable coordinate change, the transpose
of $\vartheta$ reads $(x_0,\ldots,x_5,0,0,0)$, so that $\cF \simeq \cO_{\bP^5}^3 \oplus
\cT_{\bP^5}(-1)$.
This shows \ref{18-ii} $\Leftrightarrow$ \ref{18-iii}.

\bigskip

\paragraph{\ref{18-iv} \textit{implies} \ref{18-iii}}
We borrow the setting from \cite{GSW, gs}, in the following sense.
In this paper, a Coble cubic $\Gamma$
is a hypersurface $\Gamma_\omega$ defined by an alternating 3-form $\omega$ over a 9-dimensional vector
space $V_9$. Note that in the literature a Coble cubic must be
singular precisely along an abelian surface. Incidentally, this
happens if and only if $\omega$
sits outside a discriminant locus which is best described in terms of
a complex reflection arrangement sitting inside a Cartan subspace of
$\wedge^3 V_9^\vee$, again we refer to \cite{gs}.
However, here we put no specific condition on $\omega$ besides the
fact that $\Gamma_\omega$ defined below is an irreducible cubic hypersurface.
To define $\Gamma_\omega$ we use the identification $\wedge^3 V_9^\vee \simeq H^0(\Omega_{\bP(V_9)}^2(3))$.
Then, $\omega \in \wedge^3 V_9^\vee$ gives a skew-symmetric morphism
\[\varphi_\omega  : \cT_{\bP(V_9)}(-2) \to \Omega_{\bP(V_9)}(1).\]
Assuming that the Pfaffian of $\varphi_\omega$ is non-zero (i.e., that
$\varphi_\omega$ is injective), we get a Steiner-Pfaffian hypersurface
of type $\Omega_{\bP(V_9)}(1)$, which is the Coble cubic
$\Gamma_\omega$.

Assuming condition \ref{18-iv}, there is an alternating 3-form
$\omega$ and a linear $\bP^5$ inside $\bP(V_9)$
such that $X = \Gamma_\omega \cap \bP^5$, so $X$ is the Pfaffian of
$\varphi_\omega|_{\bP^5}$. Since $\Omega_{\bP(V_9)}(1)$ restricts to
$\bP^5$ as $\cF^\vee = \cO_{\bP^5}^{3} \oplus \Omega_{\bP^5}(1)$, 
we get that $X$ is Steiner-Pfaffian of type $\cF^\vee$ and \ref{18-iii} is satisfied.

\bigskip

\paragraph{\ref{18-iii} \textit{implies} \ref{18-iv}}
Consider vector spaces $V_3$ and $V_6$ of dimension $3$ and $6$ and 
let us assume that $X \subset \bP(V_6)$ is Steiner-Pfaffian of type
$\cF^\vee = V_3^\vee \otimes \cO_{\bP(V_6)} \oplus \Omega_{\bP(V_6)}(1)$, so
that $X = \Pf(\varphi)$, for some skew-symmetric morphism $\cF(-1) \to
\cF^\vee$. Then we have
\begin{align*}
  \varphi \in H^0(\wedge^2 \cF^\vee(1)) &\simeq 
                                          H^0(\Omega_{\bP(V_6)}^2(3)) \oplus V_3^\vee \otimes   H^0(\Omega_{\bP(V_6)}(2))
                                          \oplus \wedge^2V_3^\vee \otimes   H^0(\cO_{\bP(V_6)}(1))  \simeq\\
                                        & \simeq \wedge^3 V_6^\vee \oplus V_3^\vee \otimes \wedge^2 V_6^\vee
                                          \oplus \wedge^2 V_3^\vee
                                          \otimes V_6^\vee.
\end{align*}
Set $V_9 = V_3 \oplus V_6$ and consider the decomposition:
\[
  \wedge^3 V_9^\vee = \wedge^3 V_6^\vee \oplus V_3^\vee \otimes \wedge^2 V_6^\vee
                                          \oplus \wedge^2 V_3^\vee
                                          \otimes V_6^\vee \oplus \wedge^3 V_3^\vee.
\]
Then, any choice of $\lambda \in \wedge^3 V_3^\vee$, together with
$\varphi$, determines an alternating 3-form $\omega(\lambda)$ over $V_9$ such
that the restriction to $\varphi_{\omega(\lambda)}$ tp $\bP(V_6)$ is
$\varphi$. The Pfaffian of $\varphi_{\omega(\lambda)}$ is thus non-zero and
defines a Coble cubic $\Gamma_{\omega(\lambda)} \subset \bP(V_9)$ such that $X =
\Gamma_{\omega(\lambda)} \cap \bP(V_6)$.

This proves along the way that, for any instanton $\cE$ of
charge 1 on $X$, there is a one-parameter
family of Coble cubics $(\Gamma_{\omega(\lambda)} \mid \lambda \in
\bC)$ such that $X = \Gamma_{\omega(\lambda)} \cap \bP(V_6)$ and  with the
additional property that for any value of the parameter $\lambda$, the instanton
$\cE= \coker(\varphi)$ is unchanged.

\bigskip

\paragraph{\ref{18-ii} \textit{is equivalent to} \ref{18-v}}

This is Proposition \ref{pInstanton1}.
Note that the dimension of the moduli
space of rank-2 instantons of charge 1 is computed by Corollary \ref{cCharge1}.

\bigskip

\paragraph{\ref{18-ii} \textit{is equivalent to} \ref{18-vi}}

This is Proposition \ref{pEquivalence15}.

\bigskip

\paragraph{\ref{18-i} \textit{implies} \ref{18-vii}}

This follows from Theorem \ref{theorem ulrich}.

\bigskip

\paragraph{\textit{Any condition implies} \ref{18-viii}}

We mentioned in \S \ref{section:instanton of rank 2} that \ref{18-ii}
implies \ref{18-viii}. Also, Proposition \ref{list of deltas} says
that \ref{18-vii} implies \ref{18-vii}. Note that $\dim(\fU_X(3))$
follows from the same proposition.

\bigskip

\paragraph{\textit{A generic cubic in  $\hat \cC_{18}$} verifies conditions
  \ref{18-i} to   \ref{18-viii}} Indeed, by Corollary \ref{cCharge1}
we know that if $X$ is general enough in $ \hat \cC_{18}$, then $X$ carries
a one-dimensional family of instantons of charge 1, so condition
\ref{18-ii} holds. Since in the proof of Corollary \ref{cCharge1} we
identified a projective bundle over the moduli space $\fI_X(2,1)$ with a fibre of a morphism
among integral schemes, we have that $\fI_X(2,1)$ is generically
smooth of dimension $1$, so for an instanton bundle $\cE$
corresponding to a general point of $\fI_X(2,1)$ we have
$\dim(\Ext^1_X(\cE,\cE))=1$.
By Riemann-Roch, this implies that $\cE$ is unobstructed, so that $X$
actually satisfies \ref{18-i} and all conditions up to \ref{18-viii}.

\begin{remark}
  Another proof of the fact that a general cubic $X$ in $\hat \cC_{18}$
  supports an unobstructed instanton of charge $1$ can be given by
  \texttt{Macaulay2}.
  Indeed, taking a random projection $Y$ of a smooth Del Pezzo surface of degree
  6 in $\bP^5$ and a random cubic fourfold $X$ containing $Y$, it is
  an immediate check with  \texttt{Macaulay2} to see that
  $h^1(\cN_{Y|X})=1$. Then, by Lemma \ref{normale-ext}, the associated
  instanton bundle $\cE$ satisfies
  $\dim(\Ext^1(\cE,\cE))=1$.
\end{remark}

\subsection{General cubic hypersurfaces in $\cC_{20}$.} \label{20:section}
In this section we deal with the case of rank two instanton bundles
with charge $k=2$. 
Here we assume that $\bk$ has characteristic zero.
We already know from Table \ref{table-k-delta} that every cubic
hypersurface $X\subseteq\p5$ supporting
a rank-2 instanton bundle of charge 2 corresponds to a point in
$\cC_{20}$. Such an instanton gives rise to a Steiner-Pfaffian
representation and, as we will observe in a minute, generically the
associated Steiner bundles is $\cF^\vee = \Omega_{\bP^5}(1)^{\oplus 2}$.
Also, we know that such an unobstructed instanton gives rise
to an Ulrich bundle of rank 4. The main result of this section aims at
reverting these assertions, by proving that for a general cubic of
$\cC_{20}$ all these conditions are fulfilled.

\begin{theorem} \label{20-theorem}
  Let $X$ be a general cubic fourfold in $\hat{\cC}_{20}$. Then:
  \begin{enumerate}[label=\roman*)]
    \item $X$ supports a two-dimensional family of rank-2 instanton
      bundles of charge 2,
    \item $X$ supports a six-dimensional symplectic family of stable
      Ulrich bundles of rank $4$,
    \item $X$ is Steiner-Pfaffian of type $\Omega_{\bP^5}(1)^{\oplus 2}$.
    \end{enumerate}
\end{theorem}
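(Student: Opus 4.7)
The plan is to adapt to $\cC_{20}$ the architecture of the proof of Theorem \ref{18-theorem}, replacing the Del Pezzo sextic scroll by a projected anticanonical Del Pezzo of degree $7$ (an embedded $\Bl_{p_1,p_2}\bP^2 \subset \bP^7$ projected birationally to $\bP^5$), and inserting a \texttt{Macaulay2} computation at the key unobstructedness step as announced in the introduction. Indeed, Example \ref{eSerre} with $\quantum=2$ and Proposition \ref{pSurfaceY} (applied with $s=0$, $k=2$, $d=3$) show that a rank-$2$ charge-$2$ instanton $\cE$ on $X$ arises from a surface $Y\subset X$ of exactly this type ($\deg Y=7$, $\omega_Y\cong\cO_Y(-1)$, $h^0(\cO_Y(1))=8$), and the converse reconstruction of $\cE$ from $Y$ follows from Theorem \ref{tSerre}. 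Assertion (i) therefore reduces to showing that a general $X\in\hat{\cC}_{20}$ contains such a $Y$ and that the resulting family of instantons is $2$-dimensional.

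For the dimension count I would imitate Corollary \ref{cCharge1}. Let $\mathfrak{D}_7\subset\Hilb(\bP^5)$ parametrise these projected Del Pezzos; it is irreducible, since the abstract surface has a $4$-dimensional automorphism group while the centre of the double projection from $\bP^7$ accounts for the remaining moduli. One then computes $h^0(\cI_{Y|\bP^5}(3))$ from the minimal free resolution of $\cI_{D_7|\bP^7}$ pushed along the projection. The incidence variety $\mathfrak{J}=\{(Y,X)\in\mathfrak{D}_7\times\mathfrak{X}_3:Y\subset X\}$ has a first projection with fibre $\bP(H^0(\cI_{Y|\bP^5}(3)))$ and a second projection into $\mathfrak{X}_3$ with image in $\hat{\cC}_{20}$ by Table \ref{table-k-delta}. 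The fibre of the second projection over $X$ parameterises pairs $(\cE,[\sigma])$ with $[\cE]\in\fI_X(2;2)$ and $[\sigma]\in\bP(H^0(\cE))$, so once one checks (by exhibiting an explicit $(Y,X)$) that the second projection is dominant, numerical balancing forces $\dim\fI_X(2;2)=2$ generically on $\hat{\cC}_{20}$.

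The main obstacle, where \texttt{Macaulay2} enters as announced, is the unobstructedness of $\cE$. By Lemma \ref{normale-ext} we have $\dim\Ext^2_X(\cE,\cE)\le h^1(\cN_{Y|X})$, while Lemma \ref{at-least-1} provides $\dim\Ext^2_X(\cE,\cE)\ge 1$, so it is enough to exhibit one pair $(Y,X)$ at which $h^1(\cN_{Y|X})=1$. A computer check produces such a pair directly, by taking a random projected Del Pezzo of degree $7$ in $\bP^5$ and a random cubic fourfold through it. Semicontinuity then upgrades the conclusion to the generic $X\in\hat{\cC}_{20}$. From here assertion (ii) is immediate from Theorem \ref{theorem ulrich}: the acyclic extension of $\cE$ has rank $r=s+k=4$ and deforms flatly to an Ulrich bundle; Proposition \ref{list of deltas} and Table \ref{table-of-possible} give $\dim\fU_X(4,a)=6$ at the corresponding $a$; and stability is automatic, because an Ulrich destabiliser of rank strictly smaller than $4$ would, again by Proposition \ref{list of deltas}, force $X\in\hat{\cC}_\delta$ for some $\delta\in\{8,14,18\}$, contradicting generality in $\hat{\cC}_{20}$.

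Finally, assertion (iii) follows from the equivalence with Steiner-Pfaffian representations established in \cite{An--Cs2}: any rank-$2$ charge-$2$ instanton presents $X$ as the Pfaffian of a skew map $\varphi:\cF(-1)\to\cF^\vee$ with $\cF$ fitting in a resolution $0\to\cO_{\bP^5}(-1)^{\oplus 2}\to\cO_{\bP^5}^{\oplus 12}\to\cF\to 0$. Mimicking the step $\ref{18-ii}\Leftrightarrow\ref{18-iii}$ of \S\ref{18-proof}, for a generic such instanton the classifying $12\times 2$ matrix of linear forms, up to the $\GL_{12}\times\GL_2$ action, reduces to the block form given by two independent Euler maps $\cO^{\oplus 6}\to\cO(1)$, which yields $\cF\cong\cT_{\bP^5}(-1)^{\oplus 2}$ and hence $\cF^\vee\cong\Omega_{\bP^5}(1)^{\oplus 2}$. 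That this stratum fills $\hat{\cC}_{20}$ is a matter of counting: skew maps $\cT_{\bP^5}(-2)^{\oplus 2}\to\Omega_{\bP^5}(1)^{\oplus 2}$ form a vector space of dimension $h^0(\Omega^2_{\bP^5}(3))\cdot\dim S^2(\bk^2)=60$, and modding out by $\Aut(\cF)=\GL_2$ of dimension $4$ gives $56=\dim\hat{\cC}_{20}+\dim\fI_X(2;2)$, matching exactly the expected dimension.
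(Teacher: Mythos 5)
Your reduction of assertion (i) to the presence of a projected degree-$7$ Del Pezzo surface $Y\subset X$ is where the argument breaks. A rank-$2$ charge-$2$ instanton $\cE$ arises from such a surface via Theorem \ref{tSerre} with $s=0$ only when $h^0(\cE)\ne 0$ (this is precisely the hypothesis of Proposition \ref{pInstanton2}), and the paper shows that on a general cubic of $\hat{\cC}_{20}$ the charge-$2$ instantons have $h^0(\cE)=0$: the relevant surfaces are the degree-$16$ canonical surfaces of Proposition \ref{pEquivalence16}, cut out by sections of $\cE(1)$, not Del Pezzo septics. Consequently the second projection of your incidence variety $\mathfrak J\subset\mathfrak D_7\times\mathfrak X_3$ is \emph{not} dominant onto $\hat{\cC}_{20}$ --- in the $\cC_{18}$ case the analogous dominance is an external input from \cite{A--H--T--VA}, not something obtained by ``exhibiting an explicit pair'' --- so your numerical balancing does not compute $\dim\fI_X(2;2)$ for the general $X$, and your \texttt{Macaulay2} check of $h^1(\cN_{Y|X})=1$ at a Del Pezzo pair lives on a proper closed subset from which semicontinuity cannot propagate to the generic point of $\hat{\cC}_{20}$ (one would first need to know that the special instanton deforms along with $X$ over a dense subset, which is again assertion (i)). The same defect undermines (iii): the dimension match $60-4=54+2$ is a consistency check, not a proof of dominance, since nothing in your argument bounds the generic fibre dimension of the Pfaffian map from above.

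The paper proceeds in the opposite direction: it starts from a general skew-symmetric $\varphi:\cTF(-1)\to\cTF^\vee$ with $\cTF=\cT_{\bP^5}(-1)^{\oplus 2}$, verifies (Proposition \ref{pDominant2}) that $X=\Pf(\varphi)$ is smooth, that $\cE=\coker(\varphi)$ is a charge-$2$ instanton with $h^0(\cE)=0$, and computes $\dim\Ext^1_X(\cE,\cE)=2$ and $\dim\Ext^2_X(\cE,\cE)=1$ using the degree-$16$ surface $Y=(\sigma)_0$ with $\sigma\in H^0(\cE(1))$; the \texttt{Macaulay2} computation $h^1(\cN_{Y|X})=1$ is performed on \emph{this} surface. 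Dominance of $\Pf$ (Corollary \ref{cDominant}) then follows because the fibres of $\varphi\mapsto\coker(\varphi)$ are $\GL_2$-orbits of dimension at most $4$, so the explicit example produces a fibre of $\Pf$ of dimension at most $6=60-54$, which is incompatible with $\Pf$ failing to be dominant. This yields (iii) and (i) simultaneously, and (ii) then follows from Theorem \ref{theorem ulrich} as you say. To salvage your plan you would have to replace $\mathfrak D_7$ by the family of degree-$16$ surfaces of Proposition \ref{pEquivalence16}, or argue directly on the affine space $H^0(\wedge^2\cTF^\vee(1))$ as the paper does.
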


This proves Theorem \ref{C20-intro} from the introduction. Again, once the existence of Ulrich bundles of rank four is ensured, the assertion about their stability is clear if $X$ does not lie in $\hat \cC_{14}$, see the argument in \S \ref{18-section}.

\subsubsection{Instanton bundles of charge 2 and surfaces}

Let us see how the presence of rank-2 instanton bundles of charge 2 is
reflected by the fact that our cubic fourfold $X$ contains certain
surfaces.
The story here is a bit different than in the foregoing discussion, as Del Pezzo surfaces (this time of
degree 7) are only
associated with special instanton bundles, namely those that have a
non-zero global section, while canonical surfaces arise from generic
instantons of charge 2.

\begin{proposition} \label{pInstanton2}
  A smooth cubic fourfold $X$ supports a rank-2 instanton bundle $\cE$
  of  charge 2 having $h^0(\cE) \ne 0$ if and only if $X$ contains a connected non-degenerate LCI
  surface $Y$ with
  \[
    \omega_Y\simeq \cO_Y(-1), \quad h^1(\cO_Y)=0, \quad h^0(\cO_Y(1))=8.
  \]
\end{proposition}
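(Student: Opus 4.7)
The proof follows the same template as Proposition \ref{pInstanton1}, with the charge increased from $k=1$ to $k=2$, so the geometric and cohomological bookkeeping is already developed in the preceding sections. My plan is to prove the two implications separately, relying in the forward direction on the general package of Proposition \ref{pSurfaceY} and in the converse direction on the Hartshorne--Serre construction (Theorem \ref{tSerre}) together with Lemma \ref{silly lemma} to recognize the instanton condition.

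For the forward direction, let $\cE$ be a rank-2 instanton of charge $2$ with a non-zero section $\sigma\in H^0(\cE)$. Since $h^0(\cE(-h))=0$ by Definition \ref{dInstanton}, the zero locus $Y=(\sigma)_0$ has pure dimension $2$ by Remark \ref{rFirstSection}. I then apply Proposition \ref{pSurfaceY} with $d=3$, $s=0$, $k=2$: items (2) and (4) give that $Y$ is connected, non-degenerate and LCI, with $\omega_Y\cong\cO_Y(2d+2s-7)=\cO_Y(-1)$ and $q(Y)=0$; while \eqref{Normal1}, which applies here because $d+s=3$ and $k\ge 1$, yields $h^0(\cO_Y(1))=h^0(\cO_{\bP^5}(1))+k=8$.

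For the converse, starting from $Y$ as in the statement, adjunction on $X$ gives $\det(\cN_{Y|X})\cong\omega_Y\otimes(\omega_X)^\vee|_Y\cong\cO_Y(2)$. Since $h^1(\cO_X(-2))=h^2(\cO_X(-2))=0$, Theorem \ref{tSerre} produces a unique rank-2 vector bundle $\cE$ with $\det(\cE)=\cO_X(2)$ fitting into
\[
0\to\cO_X\to\cE\to\cI_{Y|X}(2)\to 0,
\]
hence automatically satisfying $h^0(\cE)\ge 1$. Twisting this sequence by $\cO_X(-1)$ and $\cO_X(-2)$ and using that both $\cO_X(-1)$ and $\cO_X(-2)$ are acyclic on the cubic fourfold $X$, the four cohomological conditions required by Lemma \ref{silly lemma} translate into statements about $h^p(\cI_{Y|X}(t))$ for $t\in\{0,1\}$. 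These are in turn read off from the standard sequence \eqref{seqStandard}: non-degeneracy of $Y$ in $\bP^5$ yields $h^0(\cI_{Y|X}(1))=0$; connectedness of $Y$ together with $h^1(\cO_Y)=0$ gives $h^1(\cI_{Y|X})=h^2(\cI_{Y|X})=0$; and comparing $h^0(\cO_Y(1))=8$ with $h^0(\cO_X(1))=6$ in the cohomology of $0\to\cI_{Y|X}(1)\to\cO_X(1)\to\cO_Y(1)\to 0$ delivers $h^1(\cI_{Y|X}(1))=2$, which pins down the charge at exactly $k=2$.

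Rather than a genuine obstacle, the main point to watch is that the numerical hypothesis $h^0(\cO_Y(1))=8$ plays a double role: it both guarantees the correct dimension $h^1(\cE(-h))=2$, and rules out the possibility that $\cE$ turns out to be an instanton of different charge. The three hypotheses on $Y$ are tight, in the sense that dropping any of them would invalidate one of the vanishings required by Lemma \ref{silly lemma}; nothing subtle beyond this occurs.
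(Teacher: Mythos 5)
Your proposal is correct and follows essentially the same route as the paper: the paper's proof of this proposition simply defers to the argument of Proposition \ref{pInstanton1} (forward direction via Proposition \ref{pSurfaceY} with $d=3$, $s=0$ and \eqref{Normal1}; converse via Theorem \ref{tSerre} and the cohomology of \eqref{seqIdeal}, i.e.\ the content of Lemma \ref{silly lemma}), noting only that $h^0(\cE)\neq 0$ must now be assumed in one direction and is automatic in the other from the injection $\cO_X\hookrightarrow\cE$. You have just written out in full the details the paper leaves implicit.
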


\begin{proof}
The proof is exactly the same as that of Proposition
\ref{pInstanton1}, except for the fact that this time we have to
assume $h^0(\cE)\ne 0$ in the direct implication and to prove it in
the converse. However, this is obvious since  \eqref{seqIdeal} with
$d=3$ and $s=0$ gives an injection of $\cO_X$ into $\cE$ so $h^0(\cE)
\ne 0$.
\end{proof}

\begin{remark}
For a general isomorphic projection $Y\subseteq\p5$  of a Del Pezzo surface of degree $7$ in $\p7$ we have $h^0(\cI_{Y\vert\p5}(2))=0$ by \cite[Proposition 5.3]{KapG}, whence $h^0(\cE)=1$.
A smooth cubic fourfold containing $Y$ thus carries an instanton bundle $\cE$ of rank $2$ and charge $2$ with $h^0(\cE)=1$.

We do not know if there are smooth cubic fourfolds carrying instanton bundles with $h^0(\cE)\ge2$. 
We note that, if such a bundle exists, then the geometry of $Y$ is rather peculiar. Indeed, we have $h^0(\cI_{Y\vert\p5}(2))\ge1$, so $Y$ is contained in a quadric. If such a quadric is smooth, then as in Example \ref{eSerre} one deduces that $Y$ is the zero locus of a rank two instanton bundle with charge $2$, contradicting Corollary \ref{cBound}. 
So, there are quadrics through $Y$ but none of these quadrics is smooth. Actually, by stability of $\cE$ we see that a pencil of sections gives an injective map $\lambda : \cO_X^{\oplus 2} \to \cE$ which in turn gives rise to a quadric section $Z = \bV(\det(\lambda))$ of $X$. The threefold $Z$ is anticanonical, namely $\omega_Z \simeq \cO_Z(-1)$, and $\cC=\coker(\lambda)$ is a reflexive sheaf satisfying $\cC^\vee \otimes \omega_Z \simeq \cC(-3)$, by Grothendieck duality. Then $\cC^\vee \simeq \cC(-2)$. Hence, if $\cC$ was a line bundle on $Z$, we would have $\cC^{\otimes 2} \simeq \cO_Z(2)$, which is not possible since the Picard group of $Z$ has no torsion and $\cC$ is not isomorphic to $\cO_Z(1)$.
Then $Z$ is actually a non-factorial quadric section of $X$.
\end{remark}

It is not true that the general point of $\cC_{20}$ corresponds to a
smooth cubic hypersurface as above supporting an instanton bundle $\cE$ of rank $2$ and charge $2$ with $h^0(\cE) \ne 0$. Actually, we will see that a
general cubic in $\cC_{20}$ supports instanton bundles of rank $2$ and charge $2$
but that typically they have no non-zero global section.
Nevertheless, we are able to characterize the presence of instanton
bundles of charge 2 via canonical surfaces as in Propositions \ref{pEquivalence14} and  \ref{pEquivalence15}.

\begin{proposition}
\label{pEquivalence16}
A smooth cubic fourfold $X$ supports a rank two instanton bundle $\cE$
with charge $\quantum=2$ and $h^0(\cE)=0$, if and only if there is a
smooth, connected surface $Y\subseteq X$, contained in no cubic other than $X$,
such that
\[
  \deg(Y)=16,\quad q(Y)=0, \quad p_g(Y)=6,\quad \omega_Y\cong\cO_Y(1).
\]
\end{proposition}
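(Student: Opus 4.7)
The structure of the argument mirrors Propositions \ref{pEquivalence14} and \ref{pEquivalence15}. For the forward implication, let $\cE$ be an instanton bundle of rank 2 and charge 2 on $X$ with $h^0(\cE)=0$. Castelnuovo--Mumford regularity of $\cE(2)$ forces $h^i(\cE(1))=0$ for $i\geq 1$, whence $h^0(\cE(1))=\chi(\cE(1))=18$ by \eqref{ChiE}. In the notation of Remark \ref{rFirstSection}, we therefore have $s_0=1$, so a general section of $\cE(1)$ vanishes on an LCI surface $Y$ of pure dimension two, which is smooth for sufficiently generic choice of section. Proposition \ref{pSurfaceY} with $d=3$, $s=1$, $k=2$ then yields $\deg(Y)=16$, $\omega_Y\cong\cO_Y(1)$, connectedness, non-degeneracy, linear normality (as $d+s=4\ge 4$), hence $p_g(Y)=h^0(\omega_Y)=h^0(\cO_Y(1))=6$, and $q(Y)=0$. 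The key novel feature is the cubic-uniqueness condition, which follows from twisting \eqref{seqIdeal} by $\cO_X(-1)$ to obtain
\[0\longrightarrow\cO_X(-1)\longrightarrow\cE\longrightarrow\cI_{Y\vert X}(3)\longrightarrow 0,\]
since $h^0(\cO_X(-1))=h^0(\cE)=0$ gives $h^0(\cI_{Y\vert X}(3))=0$, i.e.\ $h^0(\cI_{Y\vert\bP^5}(3))=1$.

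For the converse, adjunction on $X$ transforms $\omega_Y\cong\cO_Y(1)$ into $\det\cN_{Y\vert X}\cong\cO_Y(4)$. Theorem \ref{tSerre} with $\mathcal{L}=\cO_X(4)$ (the hypotheses $h^1(\cO_X(-4))=h^2(\cO_X(-4))=0$ being routine) then produces a unique rank two bundle $\cA$ fitting into $0\to\cO_X\to\cA\to\cI_{Y\vert X}(4)\to 0$, and I set $\cE:=\cA(-1)$ so $c_1(\cE)=2h$. A crucial preliminary observation is that the hypothesis $h^0(\cI_{Y\vert\bP^5}(3))=1$ forces $Y$ to be non-degenerate and contained in no quadric: otherwise, multiplying a linear (resp.\ quadratic) form vanishing on $Y$ by the $21$-dimensional (resp.\ $6$-dimensional) space of complementary forms would produce far too many cubics through $Y$. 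From this, $h^0(\cE(-1))=h^0(\cI_{Y\vert X}(2))=0$ and $h^1(\cE(-2))=h^2(\cE(-2))=0$ follow by chasing cohomology through appropriate twists of the Hartshorne--Serre sequence, while $h^1(\cE(-1))=h^1(\cI_{Y\vert X}(2))=2$ follows from the sequence $0\to\cI_{Y\vert X}(2)\to\cO_X(2)\to\cO_Y(2)\to 0$, Kodaira vanishing, and Riemann--Roch on $Y$ which give $h^0(\cO_Y(2))=\chi(\cO_Y)+h_Y^2=23$ against $h^0(\cO_X(2))=21$. Lemma \ref{silly lemma} then certifies that $\cE$ is an instanton of charge $2$; the same twist used in the forward direction yields $h^0(\cE)=0$. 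As a consistency check, $c_2(\cE(1))h^2=\deg(Y)=16$ confirms the charge via \eqref{Chern}.

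The principal obstacle is the smoothness of $Y$ in the forward direction, since unlike in Proposition \ref{pEquivalence15} the bundle $\cE(1)$ need not be globally generated a priori (we only know that $\cE(k)=\cE(2)$ is), so obtaining smoothness requires a careful Bertini-type argument restricted to the locus where global generation holds. A secondary subtlety is the derivation of both non-degeneracy and absence of enclosing quadrics from the single hypothesis ``no cubic other than $X$''; this is what replaces the separate geometric assumptions appearing in Propositions \ref{pEquivalence14} and \ref{pEquivalence15}.
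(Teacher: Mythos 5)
Your argument follows the same route as the paper: the forward direction extracts the surface from a general section of $\cE(1)$ and reads off its invariants from Proposition \ref{pSurfaceY} with $(d,s)=(3,1)$, the cubic-uniqueness condition comes from twisting the Hartshorne--Serre sequence and using $h^0(\cE)=0$, and the converse runs the Serre construction backwards exactly as in Propositions \ref{pEquivalence14} and \ref{pEquivalence15}, with the hypothesis ``no cubic other than $X$'' standing in for non-degeneracy and the absence of quadrics. All of that matches the paper, and your cohomology computations (in particular $h^1(\cI_{Y\vert X}(2))=2$, hence charge $2$) are correct.

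The one point you flag as a ``principal obstacle'' --- smoothness of $Y$, i.e.\ global generation of $\cE(1)$ --- is not an obstacle at all, and leaving it unresolved is the only genuine gap in your write-up. The hypothesis $h^0(\cE)=0$ is exactly what closes it: by \eqref{ChiE} with $t=0$ one has $\chi(\cE)=0$, and the instanton vanishings together with Serre duality (using $\cE^\vee\simeq\cE(-2)$, or the cohomology table for rank-two instantons) give $h^p(\cE)=0$ for $p\ge 2$; hence $h^1(\cE)=0$. Combined with $h^2(\cE(-1))=h^3(\cE(-2))=h^4(\cE(-3))=0$, this says $\cE$ is $1$-regular in the sense of Castelnuovo--Mumford, so $\cE(1)$ is globally generated and Bertini applies directly to a general section --- no restricted Bertini argument is needed. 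This is precisely the opening step of the paper's proof (``since $h^0(\cE)=0$ \dots\ $h^1(\cE)=0$ \dots\ $\cE$ is $1$-regular, hence its general section vanishes on a smooth surface''), and it is also the step that makes the extra hypothesis $h^0(\cE)=0$ do double duty: it both forces global generation of $\cE(1)$ and, via the twist $0\to\cO_X(-1)\to\cE\to\cI_{Y\vert X}(3)\to 0$, yields $h^0(\cI_{Y\vert X}(3))=0$. With that one line added, your proof is complete and coincides with the paper's.
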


\begin{versionb}
\begin{proposition}
\label{pEquivalence16}
The following assertions hold.
\begin{enumerate}
\item If $X$ supports a rank two instanton bundle $\cE$ with charge $\quantum=2$ and $h^0(\cE)=0$, then $h^1(\cE)=0$, $h^0(\cE(1))=18$ and the general section of $H^0(\cE(1))$ vanishes on a smooth, connected surface $Y\subseteq X$ not contained in any quadric and such that $\deg(Y)=16$, $q(Y)=0$, $p_g(Y)=6$, $\omega_Y\cong\cO_Y(1)$.
\item If there  is a smooth, connected surface $Y\subseteq X$ not contained in any quadric and such that $\deg(Y)=16$, $q(Y)=0$, $p_g(Y)=6$, $\omega_Y\cong\cO_Y(1)$, then $X$ supports a rank two instanton bundle $\cE$ with charge $\quantum=2$ and the general section of $H^0(\cE(1))$ vanishes on a surface with the same properties as $Y$.
\end{enumerate}
\end{proposition}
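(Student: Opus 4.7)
The plan is to mirror the proofs of Propositions \ref{pEquivalence14} and \ref{pEquivalence15}, combining Theorem \ref{tSerre}, the cohomological criterion of Lemma \ref{silly lemma}, and the surface data extracted in Proposition \ref{pSurfaceY}. For the direct implication, I would start from an instanton $\cE$ of rank two and charge $k=2$ with $h^0(\cE)=0$ and compute $h^0(\cE(1))$ using \eqref{ChiE}: this yields $\chi(\cE(1))=18$, and combining Serre duality $h^i(\cE(1))^\vee\cong h^{4-i}(\cE(-4))$ with the instanton cohomology table of \S\ref{section:Pfaffians} (which forces $h^i(\cE(-4))=0$ for $i\ne 3$) gives $h^i(\cE(1))=0$ for $i\ge 1$, hence $h^0(\cE(1))=18$. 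Pick a general $\sigma\in H^0(\cE(1))$: the $\mu$-stability of $\cE$ (Lemma \ref{pStable}) together with $h^0(\cE)=0$ forces the zero locus $Y:=(\sigma)_0$ to have pure dimension two. Proposition \ref{pSurfaceY} with $d=3$, $s=1$, $k=2$ then yields $\deg(Y)=16$, $\omega_Y\cong\cO_Y(1)$, $q(Y)=0$, minimality and linear normality; the latter gives $p_g(Y)=h^0(\omega_Y)=h^0(\cO_Y(1))=6$. For the constraint ``contained in no cubic other than $X$'', I would twist \eqref{seqIdeal} by $\cO_X(-1)$ to obtain
\[
0\longrightarrow\cO_X(-1)\longrightarrow\cE\longrightarrow\cI_{Y\vert X}(3)\longrightarrow 0,
\]
from which $h^0(\cI_{Y\vert X}(3))=h^0(\cE)=0$, so $h^0(\cI_{Y\vert\p5}(3))=1$.

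For the converse, given $Y$ as in the statement, the adjunction formula combined with $\omega_X\cong\cO_X(-3)$ gives $\det(\cN_{Y\vert X})\cong\cO_Y(4)$. Together with the vanishings $h^1(\cO_X(-4))=h^2(\cO_X(-4))=0$ (Serre duality on $X$), Theorem \ref{tSerre} produces a unique rank-two bundle $\cA$ with $\det(\cA)=\cO_X(4)$ fitting into
\[
0\longrightarrow\cO_X\longrightarrow\cA\longrightarrow\cI_{Y\vert X}(4)\longrightarrow 0.
\]
Setting $\cE:=\cA(-1)$, I would verify the four cohomological conditions of Lemma \ref{silly lemma} by twisting this sequence and using the standard ideal sequence of $Y$ in $X$. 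Three of them are immediate: $h^1(\cE(-2))=h^1(\cI_{Y\vert X}(1))=h^0(\cO_Y(1))-6=p_g(Y)-6=0$, $h^2(\cE(-2))=h^2(\cI_{Y\vert X}(1))=h^1(\omega_Y)=q(Y)=0$, and $h^0(\cE(-1))=h^0(\cI_{Y\vert X}(2))$, which vanishes as soon as $Y$ lies in no quadric. The remaining computation $h^1(\cE(-1))=h^1(\cI_{Y\vert X}(2))=h^0(\cO_Y(2))-21$ requires $h^0(\cO_Y(2))=23$, which I would obtain from Riemann--Roch on the canonical surface $Y$ (yielding $\chi(\cO_Y(2))=\chi(\cO_Y)+h_Y^2=7+16=23$) combined with Kodaira vanishing applied to $\cO_Y(2)\cong\omega_Y\otimes\cO_Y(1)$, with $\cO_Y(1)$ ample. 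The supplementary condition $h^0(\cE)=0$ drops out of $h^0(\cE)=h^0(\cI_{Y\vert X}(3))=h^0(\cI_{Y\vert\p5}(3))-1=0$.

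The main subtle point is deducing $h^0(\cI_{Y\vert\p5}(2))=0$ from the hypothesis that $Y$ lies in no cubic other than $X$: if $Y$ lay in a quadric $Q$, then multiplying $Q$ by the six-dimensional space of linear forms on $\p5$ would produce a $6$-dimensional family of cubics through $Y$, contradicting $h^0(\cI_{Y\vert\p5}(3))=1$. A secondary technical point is the smoothness of $Y$ in the forward direction: Castelnuovo--Mumford regularity of $\cE(k)=\cE(2)$ does not automatically yield global generation of $\cE(1)$, so to apply Bertini in characteristic zero one has to show that the base locus of $|H^0(\cE(1))|$ has codimension at least three, a verification I expect to handle using the $18$-dimensional space of global sections together with the $\mu$-stability of $\cE$.
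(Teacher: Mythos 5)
Your converse direction is sound and matches what the paper does (Serre's construction from $\det(\cN_{Y\vert X})\cong\cO_Y(4)$, then checking the four conditions of Lemma \ref{silly lemma}; your computation $h^0(\cO_Y(2))=23$ via Riemann--Roch and Kodaira vanishing, giving $h^1(\cI_{Y\vert X}(2))=2$, is exactly the intended argument). The problem is in the direct implication, where you leave a genuine gap at precisely the point the paper's proof is designed to handle: the smoothness of $Y$. You write that Castelnuovo--Mumford regularity of $\cE(2)$ does not yield global generation of $\cE(1)$ and then defer to an unexecuted plan of bounding the base locus of $|H^0(\cE(1))|$ using stability and $h^0(\cE(1))=18$; no such argument is supplied, and I do not see how stability alone would give it. The observation you are missing is that the hypothesis $h^0(\cE)=0$, combined with $\chi(\cE)=0$ (from \eqref{ChiE} at $t=0$) and the vanishing of $h^i(\cE)$ for $i\ge 2$ recorded in the cohomology table of \S\ref{section:Pfaffians}, forces $h^1(\cE)=0$. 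Together with $h^2(\cE(-1))=h^3(\cE(-2))=h^4(\cE(-3))=0$ this makes $\cE(1)$ $0$-regular, hence globally generated, and Bertini then produces a smooth $Y$. Note also that $h^1(\cE)=0$ is itself one of the asserted conclusions of part (1), and your proposal never establishes it.

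A second, related error: your Serre duality identification $h^i(\cE(1))^\vee\cong h^{4-i}(\cE(-4))$ is wrong. Since $\cE^\vee\cong\cE(-2)$ and $\omega_X\cong\cO_X(-3)$, one has $h^i(\cE(1))^\vee\cong h^{4-i}(\cE(-6))$, whereas $h^{4-i}(\cE(-4))\cong h^{i}(\cE(-1))$. With your identification and $i=1$ you would get $h^1(\cE(1))=h^3(\cE(-4))=\quantum=2$, contradicting the vanishing you claim to deduce from it. The correct route to $h^i(\cE(1))=0$ for $i\ge1$ (and hence $h^0(\cE(1))=18$) is again the $0$-regularity of $\cE(1)$ obtained from $h^1(\cE)=0$. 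Two further minor remarks: the hypothesis in part (2) as stated already gives $h^0(\cI_{Y\vert\p5}(2))=0$ directly, so your ``main subtle point'' about deducing it from ``no cubic other than $X$'' addresses a different formulation of the statement (it is, however, the caveat the paper itself makes for its version of the proposition, and your argument for it is correct); and in the forward direction the fact that $Y$ lies in no quadric is already part of Proposition \ref{pSurfaceY}(3) since $d+s=4>3$, so the detour through $h^0(\cI_{Y\vert\p5}(3))$ is unnecessary for the statement as given.
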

\end{versionb}

\begin{proof}
If $X$ supports an instanton $\cE$ as in the statement, since $h^0(\cE)=0$, it follows from \eqref{ChiE} with $t=0,1$ that
$h^1(\cE)=0$, incidentally we get $h^0(\cE(1))=18$. In particular, $\cE$ is $1$-regular in the sense of
Castelnuovo-Mumford, hence its general section vanishes on a smooth
surface $Y$.
From now on the whole proof of the statement runs along the same lines of the proofs of  Propositions \ref{pEquivalence14} and \ref{pEquivalence15}, with the caveat that the assumption that $X$ is the only cubic containing $Y$ implies $h^0(\cI_{Y|X}(2))=0$ and thus that $Y$ is contained in no quadric. 
\end{proof}

\begin{remark}
Notice that if $X$ supports a rank two instanton bundle $\cE$ with charge $\quantum=2$ and $h^0(\cE)\ne0$, then the general section of $H^0(\cE(1))$ can still vanish on a surface with all the properties listed in the statement of Proposition \ref{pEquivalence16} except, possibly, the smoothness.
\end{remark}

\subsubsection{Steiner-Pfaffian cubic fourfolds of type
  $\Omega_{\bP^5}(1)^{\oplus 2}$.}

If a cubic fourfold $X$ supports a rank-2 instanton bundle of charge
2, then, as we mentioned in the introduction, $X$ is Steiner-Pfaffian of type $\cF^\vee$, where $\cF$ fits
into
\[
  0 \to \cO_{\bP^{n+1}}(-1)^{2} \xrightarrow{\vartheta} \cO_{\bP^{n+1}}^{12} \to \cF  \to 0,
\]
For a generic choice of $\vartheta$, we have $\cF \simeq
\cT_{\bP^5}(-1)^2$, see Remark \ref{rBrambilla}.
We focus on the generic situation and we put
\[
  \cTF = \cT_{\bP^5}(-1)^{\oplus 2}.
\]
Let us look at the skew-symmetric maps $\cTF(-1) \to \cTF^\vee$.
We have 
\begin{equation}
\label{Decomposition}
\begin{aligned}
\wedge^2\cTF^\vee(1)&\cong \wedge^2 \Omega_{\p5}(3)\oplus \Omega_{\p5}\otimes
                      \Omega_{\p5}(3)\oplus \wedge^2\Omega_{\p5}(3)\\
&\cong \Omega^2_{\p5}(3)^{\oplus 3}\oplus S^2\Omega_{\p5}(3).
\end{aligned}
\end{equation}
\begin{remark}
Notice that $\Omega^2_{\p5}(3)$ is globally generated and
$h^0(\Omega^2_{\p5}(3))=20$ (e.g. see \cite[Bott
formula]{O--S--S}). Since $H^0(S^2\Omega_{\p5}(3))=0$ (see
\cite[Theorem 1.3.1]{Ete}), it follows that $\wedge^2\cTF^\vee(1)$
is not globally generated. 
Thus, we cannot use the Bertini theorem in order to prove that the Pfaffian locus $\Pf(\varphi)$ of a general section $\varphi\in H^0(\wedge^2\cTF^\vee(1))$  is a smooth hypersurface. 
\end{remark}

\begin{proposition}
\label{pDominant2}
The Pfaffian locus $X$ of a general $\varphi\in
H^0(\wedge^2\cTF^\vee(1))$ is a smooth cubic hypersurface in $\p5$
representing a point in $\cC_{20}$.
Moreover, $\cE:=\coker(\varphi)$ is a rank-2 stable instanton bundle of charge $\quantum=2$ with $h^0(\cE)=0$ and 
\begin{gather*}
  h^1(\cE\otimes\cE^\vee)= 2,\qquad
  h^2(\cE\otimes\cE^\vee)=1, \qquad
  h^p(\cE\otimes\cE^\vee)=0,  \qquad \mbox{for $p \in \{3,4\}$.}
\end{gather*}
\end{proposition}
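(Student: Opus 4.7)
The plan is to first derive, assuming $X = \bV(\Pf(\varphi))$ is smooth, all the structural properties of $\cE = \coker(\varphi)$ from the short exact sequence on $\bP^5$
\begin{equation*}
0 \to \cTF(-1) \xrightarrow{\varphi} \cTF^\vee \to \cE \to 0,
\end{equation*}
and then to ensure smoothness by exhibiting a single smooth member of the family.

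For the structural part, I would use Bott's formulas to compute the cohomology of the twists $\cTF(-1-j)$ and $\cTF^\vee(-j)$ for $j$ in the relevant range. In particular $H^*(\cTF^\vee)=H^*(\cT_{\bP^5}(-2)^{\oplus 2})=0$ gives $H^0(\cE)=0$ at once, and chasing these same vanishings through the resolution tensored by $\cO(-j)$ yields exactly the cohomological conditions of Definition \ref{dInstanton} with $k=2$. Alternatively, since $\cTF$ fits into \eqref{steiner} with $d=3$ and $k=2$, the equivalence established in \cite{An--Cs2} identifies $\cE$ as a rank-$2$ instanton of charge $2$ directly. Stability of $\cE$ follows from Lemma \ref{pStable}, and the assertion $X\in\hat{\cC}_{20}$ from Table \ref{table-k-delta}.

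The delicate step, as highlighted in the remark preceding the statement, is the smoothness of $X$: since $\wedge^2\cTF^\vee(1)$ is not globally generated, Bertini is not available. The plan is to produce a single explicit smooth example and then invoke openness. Namely, starting from a sufficiently generic rank-$2$ instanton $\cE_0$ of charge $2$ on some smooth cubic $X_0\in\hat{\cC}_{20}$ (whose existence can be established either via Hartshorne--Serre applied to a suitable canonical surface $Y_0\subset X_0$ of the type described in Proposition \ref{pEquivalence16}, or verified directly in \texttt{Macaulay2}), the equivalence of \cite{An--Cs2} combined with Remark \ref{rBrambilla} identifies the corresponding Steiner bundle with $\cTF$ for generic $\cE_0$, producing $\varphi_0 \in H^0(\wedge^2\cTF^\vee(1))$ with $X_0 = \bV(\Pf(\varphi_0))$ smooth. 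Smoothness is then an open condition on $\bP(H^0(\wedge^2\cTF^\vee(1)))$, and a parameter count using the decomposition \eqref{Decomposition} confirms that the resulting rational map into $|\cO_{\bP^5}(3)|$ dominates $\hat{\cC}_{20}$.

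For the cohomology of $\cE\otimes\cE^\vee$, one uses $\cE^\vee\cong\cE(-2)$ (from $c_1(\cE)=2h$) together with the simplicity and $\mu$-stability of $\cE$ to obtain $h^0(\cE\otimes\cE^\vee)=1$ and $h^4(\cE\otimes\cE^\vee)=0$, via Serre duality and polystability of $\cE\otimes\cE^\vee$ (whose Jordan-H\"older factors, twisted by $\cO(-3)$, have slope $-9$, so are acyclic in degree $0$). The key remaining input is unobstructedness, $h^2=1$, which can be verified on a single sufficiently general $\varphi$ using \texttt{Macaulay2} and then propagated by semicontinuity. Once $h^0,h^2,h^4$ are determined, formula \eqref{ExtE} applied with $r=4$, $k=2$, $Y^2=168$ (the value matching Table \ref{table-of-possible} for $\delta=20$, $m=6$, which equals $c_2(\cG)^2$ for the acyclic extension $\cG=\cE\oplus\cO_X(1)^{\oplus 2}$) yields $h^1=2$. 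Finally, Riemann-Roch on $X$ with $c_1(\cE)=2h$ and $c_2(\cE)\cdot h^2=7$ produces $\chi(\cE\otimes\cE^\vee)=0$, forcing $h^3=0$. The main obstacle is clearly the smoothness of $X$: the failure of global generation of $\wedge^2\cTF^\vee(1)$ means we must really produce an explicit member of the family, and it is at this step that any computer verification in the paper should enter.
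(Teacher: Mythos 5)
Your overall strategy coincides with the paper's: produce one explicit $\varphi$ with smooth Pfaffian via \texttt{Macaulay2} and invoke openness; get the instanton structure, charge and $h^0(\cE)=0$ from the resolution $0\to\cTF(-1)\to\cTF^\vee\to\cE\to 0$ and \cite{An--Cs2}; get stability from Lemma \ref{pStable}; and pin down $h^2(\cE\otimes\cE^\vee)=1$ by a computer check on one example combined with semicontinuity and the lower bound of Lemma \ref{at-least-1}. (The paper routes the computer check through $h^1(\cN_{Y|X})=1$ for the surface $Y$ cut out by a section of $\cE(1)$, via Lemma \ref{normale-ext}, but that is the same idea.)

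There is, however, one genuine gap in your determination of $h^1$ and $h^3$. You propose to obtain $h^1(\cE\otimes\cE^\vee)=2$ from formula \eqref{ExtE} and then deduce $h^3(\cE\otimes\cE^\vee)=0$ from $\chi(\cE\otimes\cE^\vee)=0$. But \eqref{ExtE} is derived in the paper precisely under the hypothesis that $\Ext^p_X(\cE,\cE)=0$ for $p>2$, i.e.\ it already presupposes $h^3=0$; so the argument is circular. With only $h^0=h^2=1$ and $h^4=0$ in hand, Riemann--Roch gives $h^1+h^3=2$, which leaves $(h^1,h^3)\in\{(2,0),(1,1),(0,2)\}$, and neither Serre duality nor stability rules out $\Ext^1_X(\cE,\cE\otimes\omega_X)\neq 0$. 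You need an independent vanishing: the paper supplies it by showing $h^3(\cE\otimes\cE^\vee)=h^2(\cN_{Y|X})=0$ through the chain $h^2(\cN_{Y|X})\le h^2(\cO_Y\otimes\cT_X)\le h^2(\cO_Y\otimes\cT_{\bP^5})$ and a diagram chase with the Euler sequence using that $Y$ is canonically embedded and lies on no quadric. (Alternatively you could compute $h^3$ directly on your explicit example and use semicontinuity, but your proposal only commits the computer to verifying $h^2=1$.) A smaller point: your alternative route to the smooth example via Hartshorne--Serre is problematic, since the instantons built from projected degree-$7$ Del Pezzo surfaces have $h^0(\cE)\neq 0$ and hence, by Remark \ref{rBrambilla}, their Steiner bundle is \emph{not} $\cTF$; and producing a surface as in Proposition \ref{pEquivalence16} inside a smooth cubic is not established independently. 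So the explicit \texttt{Macaulay2} construction is genuinely unavoidable there as well.
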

\begin{proof}
Using \texttt{Macaulay2} (see \cite{Macaulay2}) we can find $\varphi\in H^0((\wedge^2\cTF)^\vee(1))$ such  that $\Pf(\varphi)$ is smooth.
In particular we have an injective morphism $\varphi\colon \cTF(-1)\to\cTF^\vee$. Since $\det(\cTF)=\cO_{\p5}(2)$, one easily checks that $\det(\varphi)$ has degree $6$, hence $\Pf(\varphi)$ is a cubic hypersurface. Since the locus of $\varphi\in H^0((\wedge^2\cTF)^\vee(1))$ such that $\Pf(\varphi)$ is smooth is open, it follows that the general $X:=\Pf(\varphi)$ is smooth for the general $\varphi$. 
Thus we obtain an exact sequence of the form
$$
0\longrightarrow\cTF(-1)\mapright\varphi\cTF^\vee\longrightarrow\cE\longrightarrow0.
$$
Its cohomology gives $h^0(\cE)=0$. Moreover, $\cE$ is a rank two
locally free sheaf by \cite[Corollary 1.3]{An--Cs2} with charge $\quantum=2$. 
Thanks to Propositions \ref{pEquivalence16} and \ref{pSurfaceY}, $X$
contains a smooth, connected surface $Y$ such that $h^2Y=16$ and
$Y^2=92$, hence $X$ represents a cubic of $\cC_{20}$.
Let us compute the dimension of
$H^i(\cE\otimes\cE^\vee)\cong\Ext_X^i(\cE,\cE)$ for $i\ge0$.
This is roughly the same story as in Lemma \ref{normale-ext}, however
our result here is a bit more precise so we carry out the proof in
detail.
First of all, recall that $\cE(1)|_Y \simeq \cN_{Y|X}$. 
Since
$\cE$ is $\mu$-stable by Lemma \ref{pStable}, it follows that it is
also simple,
i.e. $h^0(\cE\otimes\cE^\vee)=1$. 

The cohomology of \eqref{seqIdeal} for $d=3$ and $s=1$ tensored by
$\cE^\vee(-1)\cong\cE(-3)$ implies 
\begin{equation}
\label{EEversusYE}
h^i(\cE\otimes\cE^\vee)=h^i(\cI_{Y\vert X}\otimes\cE(1))
\end{equation}
for each $i$. The cohomology of \eqref{seqStandard} tensored by $\cE(1)$ returns
\begin{equation}
\label{YEversusIE}
\begin{gathered}
h^0(\cO_Y\otimes\cE(1))=17+h^1(\cI_{Y\vert X}\otimes\cE(1)),\\ h^1(\cO_Y\otimes\cE(1))=h^2(\cI_{Y\vert X}\otimes\cE(1)),\\
h^2(\cO_Y\otimes\cE(1))=h^3(\cI_{Y\vert X}\otimes\cE(1)).
\end{gathered}
\end{equation}
We first check $h^2(\cN_{Y\vert X})=0$. By \eqref{seqNormal}, we
obtain
\begin{equation}
\label{BoundA}
h^2(\cN_{Y\vert X})\le h^2(\cO_Y\otimes\cT_X).
\end{equation}
By \eqref{seqTangentHypersurface} we obtain
\begin{equation}
\label{BoundB}
h^2(\cO_Y\otimes\cT_X)\le h^2(\cO_Y\otimes\cT_{\p5}).
\end{equation}
The cohomology of \eqref{seqTangentP5} returns an exact sequence of the form
$$
H^2(\cO_Y)\mapright{u} H^2(\cO_Y(1))^{\oplus6}\longrightarrow H^2(\cO_Y\otimes\cT_{\p5})\longrightarrow0.
$$
Thanks to the Serre duality the first map above is the dual of a map $v\colon H^0(\cO_Y)^{\oplus6}\to H^0(\cO_Y(1))$ because $\omega_Y\cong\cO_Y(1)$. The latter map is the cohomology of the restriction to $Y$ of the Euler map $\cO_{\p5}^{\oplus6}\to\cO_{\p5}(1)$. The cohomology of the commutative diagram
\begin{equation*}
\begin{CD}
@.@.0@.0\\
@.@.@VVV@VVV\\
@.@.\cI_{Y\vert\p5}^{\oplus 6}@>>>\cI_{Y\vert\p5}(1)\\
@.@.@VVV@VVV\\
0@>>>\Omega_{\p5}(1)@>>>\cO_{\p5}^{\oplus 6}@>>> \cO_{\p5}(1)@>>>0\\
@.@.@VVV @VV\varrho^Y_1 V\\
@.@.\cO_Y^{\oplus 6}@>>> \cO_Y(1) \\
@.@.@VVV@VVV\\
@.@.0@.0
\end{CD}
\end{equation*}
finally yields that $v$ is an isomorphism if
$$
h^0(\cI_{Y\vert\p5})=h^1(\cI_{Y\vert\p5})=h^0(\cI_{Y\vert\p5}(1))=h^1(\cI_{Y\vert\p5}(1))=0.
$$
the two former vanishings are obvious. The third one follows from the inequality $h^0(\cI_{Y\vert\p5}(1))\le h^0(\cI_{Y\vert\p5}(2))$, because $Y$ is not contained in any quadric. Thus $\varrho^Y_1$ is injective. Since $p_g(Y)=h^0(\cO_{\p5}(1))$, the fourth vanishing above follows.
Thus the same is true for $u$, hence $h^2(\cO_Y\otimes\cT_{\p5})=0$. By combining such a vanishing with \eqref{BoundA} and \eqref{BoundB} we obtain the claimed vanishing $h^2(\cN_{Y\vert X})=0$.

\medskip

We note that $\chi(\cN_{Y\vert X})=h^0(\cN_{Y\vert
  X})-h^1(\cN_{Y\vert X})$. The cohomology of \eqref{seqNormal}
combined with the cohomologies of the restriction of \eqref{seqTangentP5} to $Y$ and of \eqref{seqChainNormal} 
return
\begin{equation}
\label{ChiN}
\chi(\cN_{Y\vert X})=6\chi(\cO_Y\otimes\cO_{\p5}(1))-\chi(\cO_Y)-\chi(\cT_Y)-\chi(\cO_Y\otimes\cO_{\p5}(3)).
\end{equation}
Since $\omega_Y\cong \cO_Y(1)$ and $q(Y)=0$ we know that $\chi(\cO_Y)=\chi(\cO_Y(1))=7$. The Riemann-Roch theorem on $Y$ gives $\chi(\cO_Y(3))=55$. The Noether formula for $Y$ returns $c_2(\cT_Y)=68$, whence $\chi(\cT_Y)=-38$ by the Riemann-Roch theorem on $Y$. By substitution in \eqref{ChiN} we obtain $\chi(\cN_{Y\vert X})=18$.

\medskip

Next, we use \texttt{Macaulay2} (see \cite{Macaulay2}) to obtain
$h^0(\cN_{Y\vert X})=19$ and $h^1(\cN_{Y\vert X})=1$. To do this, we
choose a random skew-symmetric map $\varphi : \cTF(-1) \to \cTF^\vee$,
which can be done by selecting 3 skew-symmetric maps
$\varphi_1,\varphi_2,\varphi_3, : \cT_{\bP^5}(-2)
\to \Omega_{\bP^5}(1)$ defined by 3 alternating 3-forms in 6 variables
and combinining them to get a skew-symmetric map
\[\varphi =
  \begin{pmatrix}
    \varphi_1 & \varphi_2 \\
    \varphi_2 & \varphi_3
  \end{pmatrix}.
\]
The Pfaffian of $\varphi$ defines a cubic fourfold $X$ and we may
define a random surface $Y$ arising as zero-locus of a section of
$\cE(1)$ by selecting a random submatrix of corank one from a
presentation matrix of $\cE$.
The computer-aided calculation gives $h^1(\cN_{Y\vert X})=1$ for this
surface, and we know that 1 is the minimal value of $h^1(\cN_{Y\vert
  X})$ as we deform $Y$ flatly, since the associated instanton $\cE$
then has $h^2(\cE^\vee \otimes \cE)=1$, which is the minimal possible
value according to Lemmas \ref{normale-ext} and \ref{at-least-1}, see \eqref{one-d}. Then, by semicontinuity,
$h^1(\cN_{Y\vert X})=1$ for a generic choice of a surface $Y \subset
X$ having the invariants fixed by our hypothesis. 
Taking into account of these computations and of \eqref{EEversusYE}
and \eqref{YEversusIE},
we finally obtain all the remaining values of $h^i(\cE\otimes\cE^\vee)$.
This finishes the proof.
\end{proof}

We have a natural rational map
$$
\Pf\colon H^0(\wedge^2\cTF^\vee(1))\dashrightarrow \hat{\cC}_{20}
$$ 
defined by $\varphi\mapsto\Pf(\varphi)$. If $X\in \im(\Pf)$, then we have also another map
$$
\Phi_X\colon\Pf^{-1}(X)\longrightarrow\fI_X(2;2)
$$
defined by $\varphi\mapsto\coker(\varphi)$. 

In what follows we will show that the fibres of $\Phi_X$ have all dimension at most $\dim(\GL_2)=4$. To this purpose, let $\varphi,\varphi_0\in \Pf^{-1}(X)$ and set $\cE:=\coker(\varphi)$, $\cE_0:=\coker(\varphi_0)$. Thanks to \cite[Proposition 4.2]{An--Cs2}, each isomorphism $w\colon \cE\to\cE_0$ lifts uniquely to an isomorphism of exact sequences
\begin{equation*}
\begin{CD}
0@>>>\cTF(-1)@>\varphi>>   \cTF^\vee@> q>> \cE@>>>0\ \ \\
@.@V v VV @V u VV @V w VV \\
0@>>>\cTF(-1)@>{\varphi_0}>>   \cTF^\vee@>q_0>> \cE_0@>>>0\ .
\end{CD}
\end{equation*}
In particular $v=u_{\vert \im(\varphi)}$. Thus $\Phi_X^{-1}(\coker(\varphi))$ is the orbit of the action of the group $\Aut(\cTF)\cong \GL_2$ on $H^0((\wedge^2\cTF)^\vee(1))$ given by $(u,\varphi)\mapsto u\varphi (u_{\vert \im(\varphi)})^{-1}$. 

Let $P$ be a component of $\Pf^{-1}(X)$ and denote by $\fI\subseteq\fI_X(2;2)$ be any component containing $\Phi_X(P)$. It follows that
\begin{equation}
\label{DimFibre}
\dim(P)\le \dim(\Phi_X(P))+4\le \dim(\fI)+4.
\end{equation}

The following corollary is an almost immediate byproduct of Proposition \ref{pDominant2}

\begin{corollary}
\label{cDominant}
The map $\Pf$ is dominant.
\end{corollary}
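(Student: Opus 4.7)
The plan is to reduce dominance to a dimension count: I will show $\dim\im(\Pf)\ge \dim\hat{\cC}_{20}=54$; combined with the inclusion $\im(\Pf)\subseteq\hat{\cC}_{20}$ already obtained in Proposition \ref{pDominant2}, and the irreducibility of $\hat{\cC}_{20}$, this forces $\im(\Pf)$ to be dense in $\hat{\cC}_{20}$.

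First I would settle the ambient dimensions. The decomposition \eqref{Decomposition}, together with $h^0(\Omega^2_{\p5}(3))=20$ (Bott formula) and $h^0(S^2\Omega_{\p5}(3))=0$, gives
\[
\dim H^0(\wedge^2\cTF^\vee(1))=3\cdot 20=60.
\]
On the target side, $\hat{\cC}_{20}$ is an irreducible divisor in the $55$-dimensional open locus $\mathfrak X_3\subseteq |\cO_{\p5}(3)|$, hence has dimension $54$.

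The heart of the argument is to bound the generic fibre of $\Pf$. Pick $\varphi\in H^0(\wedge^2\cTF^\vee(1))$ general enough for Proposition \ref{pDominant2} to apply and set $X=\Pf(\varphi)$, $\cE=\coker(\varphi)$. The equalities $h^1(\cE\otimes\cE^\vee)=2$ and $h^2(\cE\otimes\cE^\vee)=1$ furnished there say that $\cE$ is simple and unobstructed in the sense of \S \ref{section:unobstructed}. By Theorem \ref{unobstructed}, the moduli space $\fI_X(2;2)$ is therefore smooth of local dimension $h^1(\cE\otimes\cE^\vee)=2$ at $[\cE]$. Feeding this into the bound \eqref{DimFibre}, which reflects the $4$-dimensional $\GL_2=\Aut(\cTF)$-action whose orbits exhaust the fibres of $\Phi_X$, yields $\dim \Pf^{-1}(X)\le 2+4=6$ along the component of $\Pf^{-1}(X)$ through $\varphi$.

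Applying the fibre dimension theorem then gives $\dim \im(\Pf)\ge 60-6=54$, which matches $\dim\hat{\cC}_{20}$, so density follows. The only nontrivial ingredient is the sharp value $h^2(\cE\otimes\cE^\vee)=1$ (which both certifies unobstructedness and, via Theorem \ref{unobstructed}, promotes the tangent dimension $2$ to an actual local dimension); this value is the main obstacle in principle, but it is precisely what Proposition \ref{pDominant2} establishes via the explicit \texttt{Macaulay2} computation, so the present statement reduces to bookkeeping.
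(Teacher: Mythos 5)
Your proposal is correct and follows essentially the same route as the paper: both arguments combine the existence (from Proposition \ref{pDominant2}) of a fibre component of $\Pf$ of dimension at most $6=2+4$ via the bound \eqref{DimFibre} with the dimension count $60-54=6$ and the fibre-dimension theorem; the paper merely phrases this as a contradiction (non-dominance would force all fibre components to have dimension at least $7$), while you run it directly. Your extra appeal to Theorem \ref{unobstructed} is harmless but not needed, since the upper bound $\dim(\fI_X(2;2))\le h^1(\cE\otimes\cE^\vee)=2$ at $[\cE]$ already suffices.
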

\begin{proof}
Since the fibers of $\mathfrak c$ are $\PGL_6$-orbits, hence
irreducible, it follows that $\hat{\cC}_{\delta}$ is irreducible
as well.
On the one hand, in Proposition \ref{pDominant2}, we showed the existence of a smooth cubic hypersurface $X\in\hat{\cC}_{20}$ endowed with an $h$-instanton bundle $\cE$ representing a point in $\im(\Phi_X)$ lying in a component of  $\fI_X(2;2)$ of dimension at most $2$, hence $\Pf^{-1}(X)$ has a component of dimension at most $6$ by \eqref{DimFibre}.
On the other hand, if  $\Pf$ is not dominant, then $\dim(\im(\Pf))\le53$, hence all the components of $\Pf^{-1}(X)$ should have dimension at least $7$ for each cubic hypersurface $X\in\hat{\cC}_{20}$  (e.g. see \cite[Exercise II.3.22 (b)]{Ha2}).
The contradiction yields that $\Pf$ is dominant.
\end{proof}

\begin{remark}
\label{rUnirational20}
In  \cite[Theorem 1.2]{Nuer} the author also shows that $\cC_{20}$ is unirational.
The unirationality of $\cC_{20}$ is also an immediate consequence of
Corollary \ref{cDominant}: indeed we prove therein that $\cC_{20}$ is
dominated by $H^0(\wedge^2\cTF^\vee(1))$.
\end{remark}

\begin{remark}
  \label{rBrambilla}
As we said, if $\cE$ is a rank two instanton on $X$ with charge $\quantum=2$, then there exists a vector bundle $\cF$ fitting into an exact sequence
\begin{equation}
\label{seqInstanton}
0\longrightarrow\cO_{\p5}(-1)^{\oplus2}\mapright\vartheta\cO_{\p5}^{\oplus12}\longrightarrow\cF\longrightarrow0
\end{equation}
and a skew-symmetric morphism $\varphi\colon\cF(-1)\to\cF^\vee$ such
that $\cE\cong\coker(\varphi)$. In the description above we restricted
ourselves only to $\cTF$ because \cite[Theorems 3.4 and 6.3]{Bra}
yield $\cF\cong\cTF$ for a general morphism $\vartheta$ and actually each time that $h^0(\cF^\vee)=0$.
This fact was also observed by Kac in a more general setting, \cite{kac:root}.
However, for special choices of $\vartheta$,
$\coker(\vartheta)$ may decompose as $\cO_{\bP^{n+1}}^{\ell} \oplus
\coker(\vartheta')$, where $\ell=h^0(\cF^\vee)$, with $1 \le \ell \le 5$ and
\[
  0 \to \cO_{\bP^{5}}(-1)^{2} \xrightarrow{\vartheta'} \cO_{\bP^{5}}^{12-\ell} \to \coker(\vartheta')  \to 0,
\]
Anyway, when $\cF\cong\cTF$, then $h^0(\cE)=0$, so the existence of
instanton bundles coming from Steiner bundles not isomorphic to $\cF_0$ is clear. Indeed, instanton bundles associated to Del Pezzo surfaces of degree $7$ as in Proposition \ref{pInstanton2} necessarily fit into \eqref{seqInstanton} with $\cF\not\cong\cTF$ because $h^0(\cE)\ne0$.
Also, as a consequence we get that the ideal of the surface $Y\subset \bP^5$ of Proposition \ref{pEquivalence16} has the self-dual resolution
\[
0 \to \cO_{\bP^5}(-4) \to \cT_{\bP^5}(-2)^{\oplus 2} \oplus \cO_{\bP^5}(-1)\to \Omega_{\bP^5}(1)^{\oplus 2} \oplus \cO_{\bP^5} \to \cI_{Y|\bP^5}(3) \to 0.
\]
\end{remark}

\section{Rank two Ulrich bundles on smooth quartic hypersurface} \label{section:quartic}

Some of the results about cubic fourfolds can also be formulated for
hypersurface of higher degree in $\bP^5$.
Let us quickly mention one result about instantons on quartic fourfolds.
This complements \cite{kim:ulrich-quartic}, where Ulrich bundles on quartic fourfolds are studied. However, we do not investigate here the potential connection between instanton and Ulrich bundles on these fourfolds.

\begin{lemma}
\label{pQuartic0}
A smooth quartic fourfold $X\subseteq\p5$ supports a rank two Ulrich bundle $\cE$ if and only if there is a smooth, connected surface $Y\subseteq X$ not contained in any quadric and such that $\deg(Y)=14$, $q(Y)=0$, $p_g(Y)=6$, $\omega_Y\cong\cO_Y(1)$.
\end{lemma}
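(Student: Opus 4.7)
The strategy mirrors the cubic case in Proposition \ref{pEquivalence14}, adjusted for $d=4$. For the forward direction, a rank-two Ulrich bundle $\cE$ on the quartic $X$ satisfies $c_1(\cE) = 3h$ (by \eqref{Slope} with $n=4$, $K_X = -2h$, $r=2$, $k=0$) and is globally generated, since Ulrich sheaves are $0$-regular in the sense of Castelnuovo--Mumford. By Bertini a general section vanishes on a smooth connected surface $Y$, and Proposition \ref{pSurfaceY} applied with $(d,s,k)=(4,0,0)$ supplies directly $\omega_Y \simeq \cO_Y(2d-7) = \cO_Y(1)$, $\deg(Y) = 14$, $q(Y) = 0$, and linear normality (hence $p_g(Y) = 6$); non-containment in a quadric follows from \eqref{Quadric}, since $d+s = 4 \ne 3$.

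For the converse, the plan is to apply Hartshorne--Serre (Theorem \ref{tSerre}). By adjunction one has
\[
\det(\cN_{Y\vert X}) \simeq \omega_Y \otimes \omega_X^{-1}\vert_Y \simeq \cO_Y(1) \otimes \cO_Y(2) = \cO_Y(3),
\]
and both $h^1(\cO_X(-3)) = 0$ and $h^2(\cO_X(-3)) = 0$ are immediate from Serre duality on $X$ (using $\omega_X = \cO_X(-2)$). This produces a unique rank-two bundle $\cE$ with $\det \cE = \cO_X(3)$ and an exact sequence
\[
0 \longrightarrow \cO_X \longrightarrow \cE \longrightarrow \cI_{Y\vert X}(3) \longrightarrow 0.
\]
To prove that $\cE$ is Ulrich, I will exploit $\cE^\vee \simeq \cE(-3)$ together with $\omega_X \simeq \cO_X(-2)$: Serre duality on $X$ reduces the Ulrich conditions to the four vanishings $h^p(\cE(-1)) = 0$ for $p \in \{0,1\}$ and $h^p(\cE(-2)) = 0$ for $p \in \{1,2\}$. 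Twisting the defining sequence by $\cO_X(-1)$ and $\cO_X(-2)$ and using that $h^p(\cO_X(-j))=0$ for the relevant $(p,j)$, these become vanishings of $h^p(\cI_{Y\vert X}(2))$ for $p=0,1$ and of $h^p(\cI_{Y\vert X}(1))$ for $p=1,2$, to be extracted from the restriction sequences $0 \to \cI_{Y\vert X}(t) \to \cO_X(t) \to \cO_Y(t) \to 0$ for $t \in \{1,2\}$.

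The main technical point, and the step specific to the quartic case, is controlling the degree-two piece. Non-containment in a quadric gives $h^0(\cI_{Y\vert X}(2))=0$; for $h^1(\cI_{Y\vert X}(2))=0$ I will show that the restriction $H^0(\cO_X(2)) \to H^0(\cO_Y(2))$ is an isomorphism between two $21$-dimensional spaces. The source dimension follows from the exact sequence for $X \subset \bP^5$, and the target from Riemann--Roch for $\omega_Y^{\otimes 2}$ on the canonical surface $Y$, using $\chi(\cO_Y) = 1-q(Y)+p_g(Y) = 7$ and $K_Y^2 = 14$, together with Kodaira vanishing $h^1(\omega_Y^{\otimes 2}) = h^2(\omega_Y^{\otimes 2}) = 0$ (valid since $\omega_Y = \cO_Y(1)$ is very ample). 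The vanishings in degree one follow more easily from linear normality $h^0(\cO_Y(1)) = 6 = h^0(\cO_X(1))$ and $h^1(\omega_Y) = q(Y) = 0$, closing the verification that $\cE$ is Ulrich.
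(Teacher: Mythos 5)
Your proof is correct and follows essentially the same route as the paper, which simply declares the argument analogous to that of Proposition \ref{pEquivalence14}: zero-locus of a general section plus Proposition \ref{pSurfaceY} for the forward direction, and Hartshorne--Serre together with the Serre-duality reduction of the Ulrich conditions to four vanishings of $h^p(\cI_{Y\vert X}(t))$, settled by the dimension count $h^0(\cO_X(2))=h^0(\omega_Y^{\otimes 2})=21$, for the converse. The only elided step is that the injectivity of $H^0(\cO_X(1))\to H^0(\cO_Y(1))$, which you need before invoking ``linear normality'' in degree one, must itself be deduced from the hypothesis that $Y$ lies in no quadric (a degenerate surface lies in reducible quadrics), exactly as the paper does in the cubic case.
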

\begin{proof}
If $\cE$ exists, then we can follow verbatim the proof of Proposition \ref{pEquivalence14}.
Conversely, assume the existence of a surface as in the statement. Theorem \ref{tSerre} implies the existence of an exact sequence 
$$
0\longrightarrow\cO_X\longrightarrow\cE\longrightarrow\cI_{Y\vert X}(3h)\longrightarrow0.
$$
From now on the proof is analogous to the one of Proposition \ref{pEquivalence14}.
\end{proof}

Lemma \ref{pQuartic0} and Remark \ref{rCatanese} yield that $X$ is Pfaffian if and only if it contains a smooth surface $Y$ with sheafified minimal free resolution
\begin{equation}
\label{seqSurface14}
0\longrightarrow\cO_{\p5}(-7)\longrightarrow\cO_{\p5}(-4)^{\oplus7}\longrightarrow\cO_{\p5}(-3)^{\oplus7}\longrightarrow\cI_{Y\vert\p5}\longrightarrow0.
\end{equation}

\begin{proposition}
  Let $X\subseteq\p5$ be a general linear Pfaffian quartic hypersurface. Then $\dim(\fI_X(2;0))=0$.
\end{proposition}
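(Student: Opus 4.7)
The plan is to run a dimension count analogous to Corollary \ref{cCharge1}, with the smooth surfaces $Y \subset \bP^5$ having resolution \eqref{seqSurface14} playing the role of the Del Pezzo sextics. By Lemma \ref{pQuartic0} and Theorem \ref{tSerre}, on a linear Pfaffian quartic $X$ there is a bijection between pairs $(\cE, [\sigma])$ with $\cE \in \fI_X(2;0)$ and $[\sigma] \in \bP(H^0(\cE))$, and smooth surfaces $Y \subset X$ of the above type. First, I would parametrize such surfaces in $\bP^5$: by the Buchsbaum-Eisenbud structure theorem for codimension-$3$ Gorenstein ideals, they are cut out by the $6 \times 6$ Pfaffians of a skew-symmetric $7 \times 7$ matrix of linear forms. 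The space of such matrices has dimension $\binom{7}{2}\cdot 6 = 126$, and $GL_7$ acts by $M \mapsto g M g^T$ with finite generic stabilizer (the pointwise condition $g M(x) g^T = M(x)$ is highly overdetermined for a sufficiently general $M$), yielding an irreducible locus $\mathfrak Y$ in the Hilbert scheme of $\bP^5$ of dimension $126 - 49 = 77$.

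Next, form the incidence $\mathfrak J = \{(Y,X) \in \mathfrak Y \times \mathfrak X_4 \mid Y \subset X\}$, where $\mathfrak X_4$ denotes the open locus of smooth quartic fourfolds. A term-by-term Euler characteristic computation from \eqref{seqSurface14} twisted by $\cO_{\bP^5}(4)$ gives $h^0(\cI_{Y\vert\bP^5}(4)) = 7\cdot 6 - 7 + 0 = 35$, so the projection $\mathfrak J \to \mathfrak Y$ has fibers an open subset of $\bP^{34}$ and $\dim \mathfrak J = 77 + 34 = 111$. The second projection $p_2\colon \mathfrak J \to \mathfrak X_4$ lands in the locus of linear Pfaffian quartics, which has dimension $8\cdot 16 - 24 = 104$ by Example \ref{ePfaffian}. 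Conversely, every rank-$2$ Ulrich bundle $\cE$ on a linear Pfaffian quartic $X$ is globally generated (Castelnuovo-Mumford $0$-regular, since $k=0$), so a general section of $\cE$ vanishes on a smooth $Y$ by Bertini, making $p_2$ dominant onto the Pfaffian locus.

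Finally, the generic fiber of $p_2$ over a Pfaffian $X$ parametrizes the pairs $(\cE, [\sigma])$, hence has dimension $\dim \fI_X(2;0) + \dim \bP(H^0(\cE))$. Formula \eqref{ChiE} gives $\chi(\cE) = 8$; since Ulrich bundles are arithmetically Cohen-Macaulay, we have $h^i(\cE) = 0$ for $0 < i < 4$, while $\mu$-stability (Lemma \ref{pStable}) and Serre duality via $\cE^\vee \simeq \cE(-3)$ (from \eqref{c1}) force $h^4(\cE) = h^0(\cE(-5)) = 0$ by negativity of $\mu(\cE(-5))$. Hence $h^0(\cE) = 8$, and comparing $\dim \mathfrak J = \dim p_2(\mathfrak J) + \dim(\text{generic fiber})$ yields $111 = 104 + \dim \fI_X(2;0) + 7$, giving $\dim \fI_X(2;0) = 0$. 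The main obstacle is the dimension claim for $\mathfrak Y$: controlling the generic $GL_7$-stabilizer is a standard but delicate point, and as a fallback one could compute $h^0(\cN_{Y\vert\bP^5})$ directly from \eqref{seqNormal} and the resolution \eqref{seqSurface14}, or invoke moduli-theoretic results of \cite{Cat}.
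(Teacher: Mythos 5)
Your argument is correct and follows essentially the same route as the paper: the same incidence variety $\mathfrak J\subseteq\mathfrak Y\times\mathfrak X_4$, the same dimension counts ($\dim\mathfrak Y=77$, $h^0(\cI_{Y\vert\p5}(4))=35$, $\dim\mathfrak J=111$, image of dimension $104$ via the Pfaffian parameter count, fibres of dimension $\dim(\fI_X(2;0))+7$ since $h^0(\cE)=8$), and the same final comparison. The only divergence is that for $\dim\mathfrak Y=77$ the paper cites a theorem of Kleppe--Mir\'o-Roig instead of performing the $\GL_7$-quotient count whose generic-stabilizer issue you rightly flag, so the fallback you mention is not needed.
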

\begin{proof}
Consider the locus $\mathfrak X_4\subseteq\vert\cO_{\p5}(4)\vert$ of smooth quartic hypersurfaces and ${\mathfrak Y}\subseteq\Hilb^{7t^2-7t+7}(\p5)$ be the locus of surfaces $Y\subseteq\p5$ as in the statement. The locus ${\mathfrak Y}$ is irreducible because it is dominated by the space of $7\times7$ skew-symmetric matrices with entries in $H^0(\cO_{\p5}(1))$. We have the incidence relation 
$$
\mathfrak  J:=\{\ (Y,X)\ \vert\ Y\subseteq X\ \}\subseteq\mathfrak Y\times \mathfrak X_4.
$$
There are obvious projections $\mathfrak y\colon \mathfrak J\to\mathfrak Y$ and $\mathfrak x\colon \mathfrak J\to\mathfrak X_4$. Notice that $\im(\mathfrak x)$ is exactly the locus of linear Pfaffian quartic hypersurfaces thanks to Proposition \ref{pQuartic0}, hence its dimension is $104$ by Example \ref{ePfaffian}.

Thanks to \cite[Theorem 2.6]{Kl--MR} we know that $\dim({\mathfrak Y})=77$. Moreover, the cohomology of \eqref{seqSurface14} tensored by $\cO_{\p5}(4)$ returns $h^0(\cI_{Y\vert \p5}(4))=35$ when $Y\in \mathfrak Y$. Thus $\dim(\mathfrak J)=111$.

The points in the fibre of $\mathfrak x$ over $X\in\im(\mathfrak x)$ are parameterized by the pairs $(\cE,\sigma)$ where $\cE\in\fI_X(2;0)$ is a rank two Ulrich bundle on $X$ and $\sigma\in H^0(\cE)$ is a general section up to scalars. Since $h^0(\cE)=8$, we deduce that the fibre over $X$ has dimension 
$\dim(\fI(2;0))+7$. Thus 
$$
104=\dim(\im(\mathfrak x))=111-(7+\dim(\fI_X(2;0)))=104-\dim(\fI_X(2;0))
$$
where $X\in\mathfrak X_4$ is general, hence $\dim(\fI_X(2;0))=0$ for such an $X$.
\end{proof}

\begin{question}
In \cite[\S 8.10]{Bea} it is asked how many ways there are to write a generic quintic threefold as a linear Pfaffian, up to conjugacy of the generic linear group. This number is finite but there is no attempt in the literature at computing it explicitly.
In view of the above proposition, also a generic Pfaffian quartic fourfold can be written in a finite number of ways as a linear Pfaffian, up to conjugacy. It is natural to ask what this number is.
 \end{question}

\bibliographystyle{amsalpha}
\bibliography{bibliography}

\end{document}